\newtheorem{theorem}{Theorem}
\newtheorem{claim}{Claim}
\newtheorem{conjecture}{Conjecture}
\newtheorem{corollary}{Corollary}
\newtheorem{definition}{Definition}
\newtheorem{lemma}{Lemma}
\newtheorem{proposition}{Proposition}
\numberwithin{equation}{section}
\DeclareMathOperator{\wdesc}{wdesc}
\DeclareMathOperator{\desc}{desc}
\DeclareMathOperator{\mx}{mx}
\begin{document}
\title{Improved upper bounds for wide-sense frameproof codes}
\date{}
\author{Yuhao~Zhao\thanks{Y. Zhao ({\tt zhaoyh21@mail.ustc.edu.cn}) is with the School of Mathematical Sciences, University of Science and Technology of China, Hefei, 230026, Anhui, China.} ~and~Xiande~Zhang
\thanks{X. Zhang ({\tt drzhangx@ustc.edu.cn}) is with the School of Mathematical Sciences,
University of Science and Technology of China, Hefei, 230026, and with Hefei National Laboratory, University of Science and Technology of China, Hefei 230088, China. 
}
}



\maketitle

\begin{abstract}
Frameproof codes have been extensively studied for many years due to their application in  copyright protection and their connection to extremal set theory.
In this paper, we investigate upper bounds on the cardinality of  wide-sense $t$-frameproof codes.
	 For $t=2$, we apply results from Sperner theory to give a better upper bound, which significantly improves a recent bound by Zhou and Zhou. For  $t\geq 3$, we provide a general upper bound by establishing a relation between wide-sense frameproof codes and cover-free families. Finally, when the code length $n$ is at  most $\frac{15+\sqrt{33}}{24}(t-1)^2$, we show that a wide-sense $t$-frameproof code has at most $n$ codewords, and the unique optimal code consists of all weight-one codewords. As byproducts, our results improve several best known results on binary $t$-frameproof codes.


\medskip\noindent {\bf Keywords}: wide-sense frameproof codes, Sperner families, cover-free families, disjunct matrices.
\end{abstract}

\section{Introduction}
   Frameproof codes were first introduced by Boneh and Shaw~\cite{BS} in the context of digital fingerprinting. Let $Q:=\left\lbrace 0,1,\dots,q-1\right\rbrace$ be an alphabet of size $q$. The \textit{fingerprints}, which are generally viewed as codewords in $Q^n$, are distributed to all registered users to protect copyrighted digital products. The clients do not know the locations and symbols embedded in the data, so they cannot remove or modify them. However, a coalition of some clients could share and compare their copies, so that they could recover the locations and symbols of the fingerprints to produce an illegal copy. Frameproof codes are designed to prevent a small coalition of clients from constructing a copy of fingerprint of an innocent user (a user not in the coalition). There is a lot of work on frameproof codes and their applications, see for instance \cite{Barg,Blackburn2003,BS,chee,Cheng-Miao,Guo-Stinson-Trung,Shangguan2017,Staddon,Stinson-Wei1998,Trung}, and references therein. One of the central problems on this topic is the studying of upper and lower bounds on the largest cardinality of frameproof codes.

   Note that there are different variants of frameproof codes in the literature, depending on different definitions of descendant sets. In this paper, we concentrate on frameproof codes in the wide-sense model which were first adopted by Boneh and Shaw~\cite{BS}. We assume the \textit{Marking Assumption} which restricts the capability of a coalition: the members of the coalition can only alter those coordinates of the fingerprint in which at least two of their fingerprints differ, as stated in \cite{Barg}.
   We denote $\mathbf{c}:=(c_1,\dots,c_n)$ as a codeword in $Q^n$.
    Given a $t$-set $X=\left\lbrace \mathbf{c}^1,\dots,\mathbf{c}^t\right\rbrace \subseteq Q^n$,
   we say a bit position $i\in[n]$ is {\it undetectable} for $X$ if $c_i^1=\dots=c_i^t$.
   Let $U(X)$ be the set of undetectable bit positions for $X$. Note that when $t=1$, any $i\in[n]$ is undetectable.

   The {\it wide-sense descendant set} of a $t$-set $X=\left\lbrace \mathbf{c}^1,\dots,\mathbf{c}^t\right\rbrace \subseteq Q^n$ is defined by
   \begin{equation*}
   	 \wdesc(X) = \left\lbrace \mathbf{y}\in Q^n : y_i=c_i^1 \ \text{if}\ i\in U(X)\right\rbrace,
   \end{equation*}
   in contrast to the {\it narrow-sense descendant set}
   \begin{equation*}
   \desc(X) = \left\lbrace \mathbf{y}\in Q^n : y_i \in \left\lbrace c_i^1,\dots,c_i^t\right\rbrace \right\rbrace.
   \end{equation*}
   The wide-sense descendant set $\wdesc(X)$ represents the set of fingerprints that the coalition can construct from $X$ according to the marking assumption, which has been studied under the names of envelope \cite{Barg},
   feasible set~\cite{BS,Stinson-Wei1998}, or  Boneh-Shaw descendant~\cite{Blackburn-survey}.

   \begin{definition}\label{orgde} Let $t\geq 2$ be an integer.
   	 We define a code $\mathcal{C}\subseteq Q^n$ to be a \emph{wide-sense $t$-frameproof code} if
   	 $
   	   \wdesc(X) \cap \mathcal{C} = X
   	 $
   	 for all $X\subseteq \mathcal{C}$ with $|X|\leq t$. And we define a code $\mathcal{C}\subseteq Q^n$ to be a \emph{narrow-sense $t$-frameproof code}, or \emph{a $t$-frameproof code}, if
   	 $
   	 \desc(X) \cap \mathcal{C} = X
   	 $
   	  for all $X\subseteq \mathcal{C}$ with $|X|\leq t$.
   \end{definition}

	Note that $X \subseteq \mathrm{desc}(X) \subseteq \mathrm{wdesc}(X)$, so a wide-sense $t$-frameproof code is also a $t$-frameproof code. When $q=2$, $\mathrm{desc}(X) = \mathrm{wdesc}(X)$, so a binary $t$-frameproof code is also a wide-sense $t$-frameproof code. However, $\mathrm{wdesc}(X)$ always strictly contains $\mathrm{desc}(X)$ when $2\leq |X|< q$,
	which is one reason why the problem of constructing such wide-sense codes is more difficult than the original problem, as mentioned by Blackburn~\cite{Blackburn-survey}. In this paper, we mainly investigate the upper bounds for wide-sense frameproof codes.


\subsection{Wide-sense 2-frameproof codes}
    For wide-sense $2$-frameproof codes, Stinson and Wei~\cite{Stinson-Wei1998} used Sperner's theorem to show that the size of a wide-sense $2$-frameproof code of length $n$ is at most {$\binom{n}{\lceil n/2 \rceil}+1$} for any alphabet $Q$. Later, Panoui~\cite{panoui} improved upon their result to show that the size of a wide-sense $2$-frameproof code 
    is at most $\binom{n}{\frac{n}{2}-1}+1$ for even length $n$ and at most $\binom{n}{\frac{n-1}{2}}-\frac{n-1}{2}$ for odd length $n$. Recently, Zhou and Zhou~\cite{Zhou-Zhou} obtained the following better bounds.
    \begin{theorem}[\cite{Zhou-Zhou}]\label{thm:zhou-zhou}
    	Let $\mathcal{C}\subseteq Q^n$ be a wide-sense $2$-frameproof code of size $m$.
    	\begin{itemize}
    		\item [i)] If $n\geq 8$ is even, then $m\leq \binom{n}{\frac{n}{2}-1}-\frac{n}{2}+1$.
    		\item [ii)] If $n\geq 7$ is odd, then
    		\begin{equation*}
    		  m \leq \left\{
    		    \begin{aligned}
    		      &\binom{n}{\frac{n-1}{2}}- \frac{n^2-9}{8}-\left\lfloor\frac{(n-5)^2}{64}\right\rfloor,
    		            &\text{ if}\ n\equiv 1\ (\mathrm{mod}\ 4) ,\\
    		      &\binom{n}{\frac{n-1}{2}}-\frac{(n+1)^2-8}{8}-\left\lfloor\frac{(n-3)^2}{64}\right\rfloor,  &\text{ if}\ n\equiv 3\ (\mathrm{mod}\ 4).
    		    \end{aligned}
    		  \right.
    		\end{equation*}
    	\end{itemize}
    \end{theorem}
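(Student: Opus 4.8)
The plan is to pass from the code to a set system, recognize that it must be an intersecting antichain with an extra ``no-sandwich'' condition, and then improve on Milner's theorem by exploiting that condition.

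First I would fix an arbitrary codeword $\mathbf{c}^0\in\mathcal{C}$ and, for each $\mathbf{c}\in\mathcal{C}\setminus\{\mathbf{c}^0\}$, record its difference set $D(\mathbf{c})=\{i\in[n]:c_i\neq c_i^0\}$. Unwinding Definition~\ref{orgde} on a pair $\{\mathbf{c}^0,\mathbf{c}^2\}$ shows that $\mathbf{c}^1\in\wdesc(\{\mathbf{c}^0,\mathbf{c}^2\})$ exactly when $D(\mathbf{c}^1)\subseteq D(\mathbf{c}^2)$, so $\mathcal{D}=\{D(\mathbf{c}):\mathbf{c}\in\mathcal{C}\setminus\{\mathbf{c}^0\}\}$ is a family of $m-1$ distinct nonempty subsets of $[n]$ forming an antichain — essentially the Stinson--Wei argument~\cite{Stinson-Wei1998}. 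The additional information comes from pairs $\{\mathbf{c}^1,\mathbf{c}^2\}$ not involving $\mathbf{c}^0$: the requirement $\mathbf{c}^3\notin\wdesc(\{\mathbf{c}^1,\mathbf{c}^2\})$ forces a coordinate $i$ with $c_i^1=c_i^2\neq c_i^3$, and for binary $\mathcal{C}$ this says precisely that no three distinct members $A,B,C$ of $\mathcal{F}:=\mathcal{D}\cup\{\emptyset\}$ satisfy $A\cap B\subseteq C\subseteq A\cup B$; taking $C=\emptyset$ shows in particular that $\mathcal{D}$ is intersecting. Thus $\mathcal{F}$ is an intersecting antichain with the empty set adjoined, and Milner's theorem already gives $m\le\binom{n}{\lceil(n+1)/2\rceil}+1$, recovering Panoui's even-length bound~\cite{panoui}. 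For general $q$ the ``no-sandwich'' condition only gets weaker (it further constrains the symbols on $A\cap B$), so I would either argue that a larger alphabet cannot help for fixed $n$ and reduce to $q=2$, or carry the extra symbol data through the same bookkeeping; I expect this to be a nuisance rather than a real difficulty.

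To get past Milner I would use that the families extremal for Milner's theorem — for even $n$ all $(\tfrac n2+1)$-subsets, for odd $n$ essentially the middle layer $\binom{[n]}{(n+1)/2}$ — are riddled with forbidden sandwiches: any two of their members at Hamming distance at least $4$ already bound a third member of the same layer. Concretely: (i) split $\mathcal{F}$ into levels and run the normalized-matching/LYM inequality with the sandwich prohibition imposed both within and between levels, which should force an extremal $\mathcal{F}\setminus\{\emptyset\}$ to lie essentially inside one level; (ii) apply a stability version of Milner's theorem to conclude that if $|\mathcal{F}\setminus\{\emptyset\}|$ exceeds $\binom{n}{\lceil(n+1)/2\rceil}$ minus the claimed correction, then $\mathcal{F}\setminus\{\emptyset\}$ must be close to the full extremal layer; and (iii) show that any family that close to the full layer still contains a sandwiched triple, a contradiction. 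For even $n$ the forced deficiency should be only linear, producing the $-\tfrac n2$; for odd $n$ the extremal layer is the densest one, so killing every sandwich costs $\Theta(n^2)$, and executing the count while tracking the residue of $n$ modulo $4$ — which fixes the parity of $\tfrac{n\pm1}{2}$ and hence the internal geometry of the relevant layer — is what produces the nested floors $\big\lfloor\tfrac{(n-5)^2}{64}\big\rfloor$ and $\big\lfloor\tfrac{(n-3)^2}{64}\big\rfloor$.

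The hard part will be step~(iii) in the odd case: one needs a sharp lower bound on how many sets must be deleted from a near-middle layer to destroy every sandwiching triple, and getting this tight enough to match the stated floors seems to require an explicit local optimization (say, fixing a small coordinate block, counting the sandwiches supported there, and optimizing over how the deletions are distributed) rather than a soft extremal-set bound. The even case needs only the right stability statement for intersecting antichains and should be comparatively routine. The remaining loose end is uniformity in the alphabet size: the cleanest route is a monotonicity lemma reducing everything to binary codes, after which all the set-theoretic reformulations above become exact.
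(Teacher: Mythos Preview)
This theorem is not proved in the present paper: it is quoted from \cite{Zhou-Zhou} as prior work, and the paper's own contribution is Theorem~\ref{thm:2-wFP-main}, which supersedes it. There is therefore no proof here to compare your proposal against.

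On the proposal itself, your set-system translation is correct and is exactly the framework the paper records as Theorem~\ref{thm-panoui}: your $\mathcal{D}$ is the family $\overline{\mathcal{X}_0}$ of complemented coincidence sets, and ``intersecting antichain'' is equivalent to ``non $2$-covering Sperner''. The sandwich prohibition you extract is also correct for $q=2$.

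There is, however, a genuine gap in your handling of $q>2$. You write that for larger alphabets the no-sandwich condition ``only gets weaker'', and propose to reduce to the binary case. But a weaker constraint on $\mathcal{D}$ permits \emph{larger} families, so the binary upper bound does not transfer in the direction you need; your suggested ``monotonicity lemma'' would have to go the other way, and no such statement is available. The $q$-independence of the Zhou--Zhou bound comes instead from the fact that \emph{every} coincidence family $\mathcal{X}_i$ is non $2$-covering Sperner simultaneously (Theorem~\ref{thm-panoui}), together with the compatibility relation of Lemma~\ref{lemma:symmetric-diff}---structure your difference-set encoding from a single basepoint $\mathbf{c}^0$ does not see. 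Beyond this, steps (ii) and (iii) of your plan are placeholders: you invoke an unspecified stability version of Milner's theorem and concede that matching the exact floors $\lfloor (n-5)^2/64\rfloor$, $\lfloor (n-3)^2/64\rfloor$ would need an ``explicit local optimization'' you do not supply. As written, this is a plausible strategy sketch for the binary case rather than a proof.
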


    Let $m_q(n,t)$ be the maximum size of a $q$-ary wide-sense $t$-frameproof code in $Q^n$, and let $g(q,n):=\binom{n}{\lfloor\frac{n-1}{2}\rfloor}-m_q(n,2)$.  Then the upper bounds in Theorem~\ref{thm:zhou-zhou} imply polynomially lower bounds for $g(q,n)$. 
   	Our first result (Theorem~\ref{thm:2-wFP-main}) provides a better upper bound for wide-sense $2$-frameproof codes, which implies an exponentially lower bound for $g(q,n)$ when $n$ is sufficiently large (see Corollary~\ref{thm:2-wFP-main-cor}).

      \begin{theorem}\label{thm:2-wFP-main}
      For any $\epsilon\in(0,\frac{1}{2})$, we have
      	\begin{equation*}
      	m_q(n,2) \leq
      	\max \left\lbrace
      	\sum_{3\lceil \frac{1-\epsilon}{2}n \rceil -n-3 \leq   i   \leq \lceil \frac{1-\epsilon}{2}n\rceil -1}
      	\binom{n}{i} +1,
      	\left(1-\frac{1}{\binom{n}{\lfloor \epsilon n\rfloor}}\right)\binom{n}{\lfloor\frac{n-1}{2}\rfloor}+2 \right\rbrace
      	\triangleq \phi(n,\epsilon).
      	\end{equation*}
      \end{theorem}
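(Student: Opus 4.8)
The plan is to convert the coding condition into an extremal set-system problem, record the structural consequences that the wide-sense condition imposes, and then split into two cases matching the two terms of $\phi(n,\epsilon)$. First, after relabelling symbols in each coordinate (which maps wide-sense $2$-frameproof codes to wide-sense $2$-frameproof codes) we may assume $\mathbf{0}\in\mathcal{C}$. For $\mathbf{c}\in\mathcal{C}\setminus\{\mathbf{0}\}$ write $S_{\mathbf{c}}=\{i\in[n]:c_i\neq 0\}$ and set $\mathcal{S}=\{S_{\mathbf{c}}:\mathbf{c}\in\mathcal{C}\setminus\{\mathbf{0}\}\}$, so that $|\mathcal{C}|=|\mathcal{S}|+1$. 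Testing the pair $X=\{\mathbf{0},\mathbf{c}'\}$ against a third codeword $\mathbf{c}$ forces $S_{\mathbf{c}}\not\subseteq S_{\mathbf{c}'}$, so $\mathcal{S}$ is an antichain (which already reproves the Stinson--Wei bound). More importantly, if $S_{\mathbf{c}}\cap S_{\mathbf{d}}=\emptyset$ then $U(\{\mathbf{c},\mathbf{d}\})=[n]\setminus(S_{\mathbf{c}}\cup S_{\mathbf{d}})$ and $\mathbf{c}$ vanishes on that set, so $\mathbf{0}\in\wdesc(\{\mathbf{c},\mathbf{d}\})\cap\mathcal{C}$, contradicting Definition~\ref{orgde}; hence $\mathcal{S}$ is intersecting, and Milner's theorem on intersecting antichains already gives $|\mathcal{C}|\le\binom{n}{\lfloor\frac{n-1}{2}\rfloor}+1$. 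This is where the quantity $\binom{n}{\lfloor\frac{n-1}{2}\rfloor}$ in the second term of $\phi(n,\epsilon)$ comes from.

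To go below that, I would use the three-codeword constraint: for distinct nonzero $\mathbf{a},\mathbf{b},\mathbf{c}\in\mathcal{C}$ there must be a coordinate $i$ with $a_i=b_i\neq c_i$, which is automatic unless $S_{\mathbf{c}}\subseteq S_{\mathbf{a}}\cup S_{\mathbf{b}}$, and in that case forces such an $i$ into $S_{\mathbf{a}}\cap S_{\mathbf{b}}$. Because $\mathcal{S}$ is already intersecting, this is no longer a condition on the supports alone, so one must now also keep track of the symbols (or, equivalently, use a second reference codeword). From here the argument is a dichotomy governed by the threshold $k:=\lceil\frac{1-\epsilon}{2}n\rceil$, according to whether $\mathcal{C}$ has a codeword ``far'' from $\mathbf{0}$ or whether all the supports $S_{\mathbf{c}}$ pile up below level $k$.

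In the first case, $|S_{\mathbf{c}}|\le k-1$ for every $\mathbf{c}$; here a Kruskal--Katona/compression argument, fed by the three-codeword constraint, shows that a support which is too small would be ``sandwiched'' by and covered by too many others --- which is impossible --- and this traps every $|S_{\mathbf{c}}|$ inside the band $[\,3k-n-3,\,k-1\,]$, so that $|\mathcal{C}|-1\le\sum_{3k-n-3\le i\le k-1}\binom{n}{i}$, the first term of $\phi(n,\epsilon)$. In the second case there is $\mathbf{c}_0$ with $|S_{\mathbf{c}_0}|\ge k$; fixing a symmetric chain decomposition of $2^{[n]}$, the antichain $\mathcal{S}$ already avoids every chain through a subset or a superset of $S_{\mathbf{c}_0}$, and pushing the three-codeword constraint through this should show that in fact a $1/\binom{n}{\lfloor\epsilon n\rfloor}$-fraction of the chains is missed, whence $|\mathcal{C}|-1\le\bigl(1-1/\binom{n}{\lfloor\epsilon n\rfloor}\bigr)\binom{n}{\lfloor\frac{n-1}{2}\rfloor}+1$, the second term. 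Taking the larger of the two bounds yields $m_q(n,2)\le\phi(n,\epsilon)$; the optimisation over $\epsilon$ needed for Corollary~\ref{thm:2-wFP-main-cor} is a separate elementary estimate.

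I expect the main obstacle to be the second case carried out uniformly in $q$: since ``$\mathcal{S}$ is an intersecting antichain'' cannot by itself improve on Milner's bound, the extra factor $1/\binom{n}{\lfloor\epsilon n\rfloor}$ has to be extracted from the symbol patterns that the three-codeword constraint forces, and turning that information into a clean deficiency --- with exactly the exponent $\lfloor\epsilon n\rfloor$ --- in a symmetric chain decomposition is the delicate point. A secondary nuisance is pinning down the exact endpoints of the band $[\,3k-n-3,\,k-1\,]$ in the first case, where the ceilings and the additive constant $-3$ have to be accounted for carefully.
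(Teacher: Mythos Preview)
Your setup is equivalent to the paper's: the supports $S_{\mathbf{c}}$ are the complements of the coincidence sets $I(\mathbf{0},\mathbf{c})$, so $\mathcal{S}=\overline{\mathcal{X}_{\mathbf{0}}}$, and the antichain/intersecting properties you derive are just the complementary form of Panoui's theorem. The genuine gap is that your dichotomy is placed on the wrong quantity, and this makes your Case~2 unworkable. The paper does not split according to whether a single support is large; it splits according to whether some \emph{pair} $A,B\in\mathcal{X}_i$ has $|A\Delta B|\ge(1-\epsilon)n$. In the complementary case every skew distance $\min(|A\setminus B|,|B\setminus A|)$ lies in $[k-1]$ with $k=\lceil(1-\epsilon)n/2\rceil$, so $\mathcal{X}_i$ is $[k-1]$-close Sperner, and the first term of $\phi$ is precisely the Xu--Yip bound for such systems (a linear-independence argument, not a band-trapping/compression argument). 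Incidentally, your own Case~1 hypothesis ``all $|S_{\mathbf{c}}|\le k-1$'' is strictly stronger and already gives $|\mathcal{S}|\le\binom{n}{k-1}$ by the antichain property alone, so your band-trapping claim is neither needed nor correct as stated.

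The real failure is Case~2. Your hypothesis ``some $|S_{\mathbf{c}_0}|\ge k$'' only places a set of size about $(1+\epsilon)n/2$ into $\mathcal{X}_{\mathbf{0}}$, and LYM on that yields savings of order $1/\binom{n}{(1+\epsilon)n/2}$, which is negligible --- nowhere near the $1/\binom{n}{\lfloor\epsilon n\rfloor}$ required. The mechanism you are missing is a \emph{change of reference codeword}: if $|I(i,j)\Delta I(i,k)|\ge(1-\epsilon)n$, then the elementary containment $I(j,k)\subseteq\overline{I(i,j)\Delta I(i,k)}$ forces $|I(j,k)|\le\epsilon n$; one then abandons $\mathcal{X}_i$ altogether and applies LYM (plus Katona's shadow lemma to pass below level $n/2$ when $n$ is even) to the Sperner family $\mathcal{X}_j$, which now contains a genuinely small set. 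That is exactly where the factor $\binom{n}{\lfloor\epsilon n\rfloor}^{-1}$ comes from, and it works uniformly in $q$ with no further input. Your symmetric-chain-decomposition plan, operating only inside $\mathcal{S}=\overline{\mathcal{X}_{\mathbf{0}}}$, cannot manufacture this factor; the ``three-codeword constraint'' you invoke is precisely what justifies the inclusion $I(j,k)\subseteq\overline{I(i,j)\Delta I(i,k)}$, but you never make the switch that exploits it.
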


      		Stirling's formula shows that $\binom{n}{\lfloor\frac{n-1}{2}\rfloor} \geq \delta \cdot \frac{2^n}{\sqrt{n}}$ for some constant $\delta\in (0,1)$.      	
      	Let $H(x)$ be the binary entropy function and write $H^{-1}(x)$ for its inverse restricted to $[0,1/2]$.
      	For $n\geq 2(1-\log_2 \delta )/(1-H(1/4))+30 $, we define
      	\[
      	\epsilon(n): = \max \left\lbrace \frac{1}{n}, 1- 2 H^{-1}\left( 1-\frac{2+\log_2 n - 2\log_2 \delta}{2n} \right)
      	\right\rbrace \in \left( 0,1/2\right) ,
      	\]
      	which tends to 0 as $n\rightarrow \infty$. Observe that
      	\begin{equation*}\label{equa-1}
      	\sum_{3\lceil \frac{1-\epsilon(n)}{2}n \rceil -n-3 \leq   i   \leq \lceil \frac{1-\epsilon(n)}{2}n\rceil -1} \binom{n}{i} +1
      	\leq
      	2^{nH\left( \frac{1-\epsilon(n)}{2}\right) }+1
      	\leq \frac{1}{2} \binom{n}{\lfloor\frac{n-1}{2}\rfloor}+2
      	\leq
      	\left(1-\frac{1}{\binom{n}{\lfloor \epsilon(n) n\rfloor}}\right)\binom{n}{\lfloor\frac{n-1}{2}\rfloor}+2.
      	\end{equation*}
      	It follows that
      	$$
      	\phi(n,\epsilon(n)) = \left(1-\frac{1}{\binom{n}{\lfloor \epsilon(n) n\rfloor}}\right)\binom{n}{\lfloor\frac{n-1}{2}\rfloor}+2
      	\leq \binom{n}{\lfloor\frac{n-1}{2}\rfloor}+2 - \delta \cdot \frac{2^{n(1-H(\epsilon(n)))}}{\sqrt{n}},
      	$$ where we used $\binom{n}{\lfloor \epsilon(n) n\rfloor} \leq 2^{nH(\epsilon(n))}$.
      	Thus we have the following corollary which shows that our upper bound for wide-sense $2$-frameproof codes improves Theorem~\ref{thm:zhou-zhou} substantially for large $n$.
      	\begin{corollary}\label{thm:2-wFP-main-cor}
      For $n\geq 2(1-\log_2 \delta )/(1-H(1/4))+30 $,
      		\begin{equation*}
      		m_q(n,2) \leq \phi(n,\epsilon(n))
      		\leq \binom{n}{\lfloor\frac{n-1}{2}\rfloor}+2 - \delta \cdot \frac{2^{n(1-H(\epsilon(n)))}}{\sqrt{n}}.
      		\end{equation*}Since $\lim_{n \rightarrow \infty} H(\epsilon(n)) = 0$, we see that $g(q,n)$  is exponentially large for large $n$.
      	\end{corollary}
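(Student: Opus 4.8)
\textit{Proof proposal.}

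The plan is to derive the corollary from Theorem~\ref{thm:2-wFP-main} by specializing the free parameter $\epsilon$ to the value $\epsilon(n)$ and then verifying the chain of inequalities displayed just before the corollary. First I would record two elementary facts used throughout: for every integer $0\le k\le n/2$ one has $\sum_{i=0}^{k}\binom{n}{i}\le 2^{nH(k/n)}$, and, by Stirling's formula, $\binom{n}{\lfloor(n-1)/2\rfloor}\ge\delta\cdot 2^n/\sqrt n$ for the absolute constant $\delta\in(0,1)$ in the statement. I would also check that $\epsilon(n)$ is well defined and lies in $(0,1/2)$ on the stated range of $n$: since $\delta<1$ we have $\log_2\delta<0$, so the quantity $a(n):=1-\frac{2+\log_2 n-2\log_2\delta}{2n}$ satisfies $a(n)<1$, while the threshold $n\ge 2(1-\log_2\delta)/(1-H(1/4))+30$ is exactly calibrated to force $a(n)\ge H(1/4)$, whence $H^{-1}(a(n))\ge 1/4$ and therefore $1-2H^{-1}(a(n))\le 1/2$; the term $1/n$ in the definition keeps $\epsilon(n)$ strictly positive.

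Next I would establish the three inequalities of the displayed chain, which is the technical core. The first, $\sum_{i}\binom{n}{i}+1\le 2^{nH((1-\epsilon(n))/2)}+1$, holds because the summation index runs inside $\{0,1,\dots,\lceil\tfrac{1-\epsilon(n)}{2}n\rceil-1\}$, whose largest element is strictly below $n/2$, so the partial-sum bound applies with $k=\lceil\tfrac{1-\epsilon(n)}{2}n\rceil-1$ and monotonicity of $H$ on $[0,1/2]$ lets us replace the exponent by $H((1-\epsilon(n))/2)$. The second, $2^{nH((1-\epsilon(n))/2)}+1\le\tfrac12\binom{n}{\lfloor(n-1)/2\rfloor}+2$, is exactly what the definition of $\epsilon(n)$ was built to guarantee: using $\binom{n}{\lfloor(n-1)/2\rfloor}\ge\delta\cdot 2^n/\sqrt n$ it suffices to show $nH\big(\tfrac{1-\epsilon(n)}{2}\big)\le n-1+\log_2\delta-\tfrac12\log_2 n$, that is, $H\big(\tfrac{1-\epsilon(n)}{2}\big)\le a(n)$, and this follows from $\tfrac{1-\epsilon(n)}{2}\le H^{-1}(a(n))$ (a restatement of the definition of $\epsilon(n)$) together with the monotonicity of $H$. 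The third, $\tfrac12\binom{n}{\lfloor(n-1)/2\rfloor}+2\le(1-1/\binom{n}{\lfloor\epsilon(n)n\rfloor})\binom{n}{\lfloor(n-1)/2\rfloor}+2$, reduces to $\binom{n}{\lfloor\epsilon(n)n\rfloor}\ge 2$, which holds since $\epsilon(n)\ge 1/n$ forces $\lfloor\epsilon(n)n\rfloor\ge 1$ and hence $\binom{n}{\lfloor\epsilon(n)n\rfloor}\ge n\ge 2$. Stringing these together shows $\phi(n,\epsilon(n))$ equals its second argument $(1-1/\binom{n}{\lfloor\epsilon(n)n\rfloor})\binom{n}{\lfloor(n-1)/2\rfloor}+2$.

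The quantitative estimate is then one line: rewrite that quantity as $\binom{n}{\lfloor(n-1)/2\rfloor}+2-\binom{n}{\lfloor(n-1)/2\rfloor}/\binom{n}{\lfloor\epsilon(n)n\rfloor}$ and bound the subtracted fraction below by $(\delta\cdot 2^n/\sqrt n)/2^{nH(\epsilon(n))}=\delta\cdot 2^{n(1-H(\epsilon(n)))}/\sqrt n$, using $\binom{n}{\lfloor(n-1)/2\rfloor}\ge\delta\cdot 2^n/\sqrt n$ and $\binom{n}{\lfloor\epsilon(n)n\rfloor}\le 2^{nH(\epsilon(n))}$; this gives the displayed upper bound on $m_q(n,2)$. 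Finally, $g(q,n)=\binom{n}{\lfloor(n-1)/2\rfloor}-m_q(n,2)\ge\delta\cdot 2^{n(1-H(\epsilon(n)))}/\sqrt n-2$, and since $\epsilon(n)\to 0$ as $n\to\infty$ (the term $1/n$ vanishes, $a(n)\to 1$, and $H^{-1}$ is continuous at $1$ with $H^{-1}(1)=1/2$), continuity of $H$ at $0$ gives $H(\epsilon(n))\to 0$, so the exponent $1-H(\epsilon(n))$ tends to $1$ and $g(q,n)$ is exponentially large.

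The only real obstacle here is bookkeeping rather than ideas: one must track the floors and ceilings carefully when passing between partial sums of binomial coefficients and the powers $2^{nH(\cdot)}$, and one must confirm that the explicit threshold on $n$ really does place $a(n)$ in the interval $[H(1/4),1)$ so that $\epsilon(n)\in(0,1/2)$. Everything downstream of that --- the three inequalities and the final rewrite --- is mechanical.
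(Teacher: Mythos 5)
Your proposal is correct and follows essentially the same route as the paper: plug $\epsilon=\epsilon(n)$ into Theorem~\ref{thm:2-wFP-main}, verify the displayed chain of inequalities (so that $\phi(n,\epsilon(n))$ is its second argument), and then apply $\binom{n}{\lfloor\epsilon(n)n\rfloor}\le 2^{nH(\epsilon(n))}$ together with $\binom{n}{\lfloor(n-1)/2\rfloor}\ge\delta\cdot 2^n/\sqrt{n}$. Your additional bookkeeping (checking $a(n)\in[H(1/4),1)$ so that $\epsilon(n)\in(0,1/2)$, and $\binom{n}{\lfloor\epsilon(n)n\rfloor}\ge 2$ for the last inequality) just makes explicit what the paper leaves implicit.
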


\subsection{General bounds for wide-sense $t$-frameproof codes}
   By establishing a relationship between binary $t$-frameproof codes and Sperner families, Stinson and Wei~\cite{Stinson-Wei1998} proved that if $\mathcal{C}\subseteq \left\lbrace 0,1\right\rbrace^n$ is a $t$-frameproof code of size $m$, then
     $m \leq t-1+ \binom{n-t+2}{\lceil (n-t+2)/2 \rceil}$.
    Since they indeed considered the wide-sense model in their proof, it is straightforward to get the following extension for wide-sense $t$-frameproof codes with arbitrary alphabet size.
    \begin{theorem}[\cite{Stinson-Wei1998}]\label{thm:Stinson-wei1998}
    For any $q\geq 2$,
    	\begin{equation*}
m_q(n,t) \leq t-1+ \binom{n-t+2}{\lceil \frac{n-t+2}{2} \rceil}.
    	\end{equation*}
    \end{theorem}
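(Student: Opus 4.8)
\emph{Proof proposal.} The plan is to induct on $t$, establishing the base case $m_q(n,2)\le\binom{n}{\lceil n/2\rceil}+1$ and the recursion $m_q(n,t)\le 1+m_q(n-1,t-1)$ for $t\ge 3$; iterating the latter $t-2$ times down to the base case then yields $m_q(n,t)\le (t-2)+\binom{n-t+2}{\lceil (n-t+2)/2\rceil}+1=t-1+\binom{n-t+2}{\lceil (n-t+2)/2\rceil}$, as desired. For the base case, fix $\mathbf c^0\in\mathcal C$ and set $D(\mathbf c):=\{i\in[n]:c_i\ne c^0_i\}$ for each $\mathbf c\ne\mathbf c^0$. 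If distinct $\mathbf c,\mathbf c'$ satisfy $D(\mathbf c)\subseteq D(\mathbf c')$, then $\mathbf c$ agrees with $\mathbf c^0$ on $U(\{\mathbf c^0,\mathbf c'\})=[n]\setminus D(\mathbf c')$, so $\mathbf c\in\wdesc(\{\mathbf c^0,\mathbf c'\})\cap\mathcal C$ while $\mathbf c\notin\{\mathbf c^0,\mathbf c'\}$, contradicting Definition~\ref{orgde}; hence $\mathbf c\mapsto D(\mathbf c)$ is injective onto an antichain of nonempty subsets of $[n]$, and Sperner's theorem bounds $|\mathcal C|-1$.

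For the inductive step, take a wide-sense $t$-frameproof code $\mathcal C\subseteq Q^n$ with $t\ge 3$ and $n\ge 1$, pick any coordinate $i_0$, and try to choose $\mathbf c^0\in\mathcal C$ so that $\mathcal C':=\{\mathbf c|_{[n]\setminus\{i_0\}}:\mathbf c\in\mathcal C\setminus\{\mathbf c^0\}\}$ is a wide-sense $(t-1)$-frameproof code of length $n-1$ with $|\mathcal C|-1$ codewords. The crux is the following local statement: if a $(t-1)$-subset $X\subseteq\mathcal C\setminus\{\mathbf c^0\}$ and a codeword $\mathbf d\in(\mathcal C\setminus\{\mathbf c^0\})\setminus X$ witness a violation of the $(t-1)$-frameproof property of $\mathcal C'$, i.e. $\mathbf d|_{[n]\setminus\{i_0\}}\in\wdesc(X|_{[n]\setminus\{i_0\}})$, then feeding the $t$-set $X\cup\{\mathbf c^0\}$ into Definition~\ref{orgde} forces all of $X\cup\{\mathbf c^0\}$ to carry a common symbol $v$ at coordinate $i_0$ with $d_{i_0}\ne v$; and then, for any codeword $\mathbf e\notin X\cup\{\mathbf d\}$ with $e_{i_0}\ne v$, a short check gives $\mathbf d\in\wdesc(X\cup\{\mathbf e\})\cap\mathcal C$ with $\mathbf d\notin X\cup\{\mathbf e\}$ and $|X\cup\{\mathbf e\}|\le t$ — impossible. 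So a violation can arise only when $i_0$ is \emph{almost constant}, with $\mathbf d$ the unique codeword not carrying $v$ at $i_0$.

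This forces the choice of $\mathbf c^0$. If $i_0$ is almost constant, with exceptional codeword $\mathbf d_0$, we set $\mathbf c^0:=\mathbf d_0$: then every remaining codeword shares the value $v$ at $i_0$ (so the restrictions to $[n]\setminus\{i_0\}$ stay pairwise distinct) and the only candidate for a violating $\mathbf d$ has been removed. If $i_0$ is not almost constant, the local statement already forbids any violation for every choice of $\mathbf c^0$; to keep the restrictions distinct we only need to avoid two surviving codewords that agree off $i_0$, and here one checks — using a suitable $3$-element coalition, which is the sole place $t\ge 3$ is used — that $\mathcal C$ contains at most one pair of codewords differing only at $i_0$, so deleting one member of that pair (if it exists) as $\mathbf c^0$ suffices. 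I expect the main obstacle to be exactly this local analysis, namely showing that a violation must localize at an almost-constant coordinate, together with the bookkeeping that a single deletion both restores distinctness and kills all violations; once that is in place the recursion, and hence the theorem, follows (small cases with $|\mathcal C|\le 2$ being trivial). Finally, the whole argument speaks only of undetectable positions and of equalities among symbols, never of the size of $Q$ or of any specific symbol, so it applies unchanged to wide-sense $t$-frameproof codes over an arbitrary alphabet — which is precisely the claimed extension to all $q\ge 2$.
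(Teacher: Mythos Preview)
The paper does not supply its own proof of this statement; Theorem~\ref{thm:Stinson-wei1998} is quoted from Stinson and Wei~\cite{Stinson-Wei1998} with the remark that ``since they indeed considered the wide-sense model in their proof, it is straightforward to get the following extension.'' So there is nothing in the paper to compare your argument against directly.

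That said, your proof is correct and is essentially the classical route. The base case is exactly the Sperner argument used throughout the paper (your sets $D(\mathbf c)$ are precisely $\overline{I(0,\cdot)}$, and you are verifying that $\overline{\mathcal X_0}$ is Sperner of size $m-1$, which is the $t=2$ case of Theorem~\ref{thm:connection-cover-free-wFP}). For the inductive step, your local analysis is sound: from a hypothetical violation $(X,\mathbf d)$ in the restricted code, applying the $t$-frameproof property to $X$ alone already forces $i_0\in U(X)$ with $d_{i_0}\ne v$, and then adding any $\mathbf e$ with $e_{i_0}\ne v$ to $X$ produces a $t$-set whose wide-sense descendant contains $\mathbf d$; so indeed $\mathbf d$ must be the unique codeword not carrying $v$ at $i_0$. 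Your ``at most one pair'' claim also checks out: three codewords agreeing off $i_0$ are killed by a $2$-coalition, and two disjoint pairs $\{\mathbf a,\mathbf b\},\{\mathbf a',\mathbf b'\}$ are killed by the $3$-coalition $\{\mathbf a,\mathbf b,\mathbf a'\}$, whose undetectable set lies in $[n]\setminus\{i_0\}$ and is matched by $\mathbf b'$. The case split (almost constant versus not) and the choice of $\mathbf c^0$ then handle both frameproofness and distinctness of the restrictions, giving the recursion $m_q(n,t)\le 1+m_q(n-1,t-1)$.

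One small point of presentation: your use of $\mathbf c^0$ inside the local statement is slightly awkward, since in Case~B you set $\mathbf c^0=\mathbf d_0$ with $(\mathbf d_0)_{i_0}\ne v$, so $X\cup\{\mathbf c^0\}$ is \emph{not} constant at $i_0$ --- but this actually makes the contradiction even more immediate (then $i_0\notin U(X\cup\{\mathbf c^0\})$, so $\mathbf d\in\wdesc(X\cup\{\mathbf c^0\})$ directly). It would read more cleanly to run the local argument with $X$ alone rather than $X\cup\{\mathbf c^0\}$; the conclusion is the same and the role of $\mathbf c^0$ is then purely to restore injectivity of the restriction map.
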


    Note that the upper bound in Theorem~\ref{thm:Stinson-wei1998} is independent of $q$. Since wide-sense $t$-frameproof codes are also $t$-frameproof codes, the upper bounds of $t$-frameproof codes are  upper bounds of wide-sense $t$-frameproof codes.
    The following general upper bound is due to Blackburn~\cite{Blackburn2003}.
    \begin{theorem}[\cite{Blackburn2003}]\label{thm:Blackburn2003}
    If $\mathcal{C}\subseteq Q^n$ is a $t$-frameproof code of maximum size $m$, then\begin{equation*}
    	m_q(n,t)\leq m \leq \left( \frac{n}{n-(r-1)\lceil n/t\rceil} \right) q^{\lceil n/t\rceil} +O(q^{\lceil n/t\rceil-1}),
    	\end{equation*} where $r\in [t]$ is an integer satisfying $r\equiv n\pmod t$.
    \end{theorem}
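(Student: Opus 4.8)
I would organize the argument around the notion of an \emph{identifying set}: call $S\subseteq[n]$ identifying for a codeword $\mathbf{c}\in\mathcal{C}$ if $\mathbf{c}$ is the only codeword of $\mathcal{C}$ whose restriction to $S$ equals $\mathbf{c}|_S$. The first step is the structural fact that, in a $t$-frameproof code, for \emph{every} partition $[n]=P_1\cup\dots\cup P_t$ into $t$ blocks, at least one block $P_j$ is identifying for $\mathbf{c}$: otherwise pick for each $j$ a codeword $\mathbf{c}^j\neq\mathbf{c}$ agreeing with $\mathbf{c}$ on $P_j$, so that $X=\{\mathbf{c}^1,\dots,\mathbf{c}^t\}$ satisfies $|X|\le t$, $\mathbf{c}\notin X$, yet $\mathbf{c}\in\desc(X)$, contradicting Definition~\ref{orgde}. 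Applying this to a balanced partition shows every codeword has an identifying set of size at most $N:=\lceil n/t\rceil$, which already yields the crude bound $m\le rq^{N}+O(q^{N-1})$; the point of the theorem is to improve the leading constant from $r$ to $\frac{n}{n-(r-1)N}$.

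Next I would pass to ``generic'' codewords. Let $\mathcal{C}^{\ast}$ be the set of codewords having no identifying set of size $\le N-1$. Since there are only $\sum_{d\le N-1}\binom{n}{d}=O(1)$ sets of size $\le N-1$ and each identifies at most $q^{N-1}$ codewords, we get $|\mathcal{C}\setminus\mathcal{C}^{\ast}|=O(q^{N-1})$, so it suffices to bound $|\mathcal{C}^{\ast}|$. Now fix $\mathbf{c}\in\mathcal{C}^{\ast}$ and suppose $S_1,\dots,S_r$ were pairwise disjoint $N$-subsets of $[n]$, none identifying for $\mathbf{c}$. Their union has size $rN$, the remaining $n-rN=(t-r)(N-1)$ coordinates split into $t-r$ blocks of size $N-1$, and the resulting $t$-block partition must, by the structural fact, contain an identifying block; since none of $S_1,\dots,S_r$ qualifies, a block of size $N-1$ does, contradicting $\mathbf{c}\in\mathcal{C}^{\ast}$. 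Hence the family of non-identifying $N$-subsets of $\mathbf{c}$ contains no $r$ pairwise disjoint members.

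The Erd\H{o}s--Ko--Rado theorem (for $r=2$, where the non-identifying $N$-sets form an intersecting family) and, for $r\ge 3$, the corresponding case of the Erd\H{o}s matching conjecture should then give that the non-identifying $N$-subsets of $\mathbf{c}$ number at most $\binom{n}{N}-\binom{n-r+1}{N}$, i.e. $\mathbf{c}$ has at least $\binom{n-r+1}{N}$ identifying $N$-subsets. Double counting the pairs $(\mathbf{c},S)$ with $\mathbf{c}\in\mathcal{C}^{\ast}$ and $S$ an identifying $N$-subset of $\mathbf{c}$, and using that each $N$-set identifies at most $q^{N}$ codewords, gives $|\mathcal{C}^{\ast}|\binom{n-r+1}{N}\le\binom{n}{N}q^{N}$, hence $|\mathcal{C}^{\ast}|\le\frac{\binom{n}{N}}{\binom{n-r+1}{N}}q^{N}$. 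Finally $\binom{n}{N}/\binom{n-r+1}{N}=\prod_{i=0}^{r-2}\frac{n-i}{n-N-i}\le\frac{n}{n-(r-1)N}$, which follows by an elementary induction on $r$ (the inductive step reduces to $N(r-1)(N-1)\ge 0$), and adding back the $O(q^{N-1})$ exceptional codewords yields the stated bound.

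The hard part is the extremal set theory in the third paragraph. For $r=2$ it is just Erd\H{o}s--Ko--Rado, and one checks $n\ge 2N$ holds for the parameters $n=rN+(t-r)(N-1)$ forced here. For $r\ge 3$ one needs the sharp upper bound on families of $N$-subsets of $[n]$ with matching number $\le r-1$, and one must verify that $n=tN-(t-r)$ lies in a range where this sharp form is known (e.g. Frankl's theorem applies once $t\ge 2r-1$); in the remaining range $r>\tfrac{t+1}{2}$ the extremal family may change shape and one has to substitute a weaker estimate or a direct partition argument, possibly at the cost of a slightly larger constant. Everything else — the descendant-set contradiction, the reduction to $\mathcal{C}^{\ast}$, and the double counting — is routine.
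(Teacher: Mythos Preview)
This theorem is quoted from \cite{Blackburn2003} without proof, so there is no in-paper argument to compare against; any comparison is to Blackburn's original.

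Your framework (identifying sets, the partition lemma, restriction to $\mathcal{C}^{\ast}$, the final double count, and the product inequality $\prod_{i=0}^{r-2}\frac{n-i}{n-N-i}\le\frac{n}{n-(r-1)N}$) is sound and is indeed Blackburn's setup. The gap is at the extremal step. From ``$\mathcal{N}(\mathbf{c})$ has no $r$ pairwise disjoint members'' you assert $|\mathcal{N}(\mathbf{c})|\le\binom{n}{N}-\binom{n-r+1}{N}$, but even the \emph{full} Erd\H{o}s matching conjecture only promises
\[
|\mathcal{N}(\mathbf{c})|\le\max\Bigl\{\tbinom{rN-1}{N},\ \tbinom{n}{N}-\tbinom{n-r+1}{N}\Bigr\},
\]
and for $r$ close to $t$ the first term is the larger one (e.g.\ at $t=r$, $n=rN$, $N\ge 2$ one has $\binom{rN-1}{N}>\binom{rN}{N}-\binom{rN-r+1}{N}$), so the inequality you wrote does not follow from the matching hypothesis alone. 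It happens that the alternative lower bound $|\mathcal{I}(\mathbf{c})|\ge\binom{n}{N}-\binom{rN-1}{N}$ would \emph{also} yield the constant $n/(n-(r-1)N)$ after the double count (since $\binom{rN-1}{N}/\binom{n}{N}\le(r-1)N/n$ for all $n\ge rN$, with equality at $n=rN$), so your worry about the extremal family ``changing shape'' is not the real obstruction. The real obstruction is that you are invoking the matching conjecture itself, which is open in general, and you do not verify that the proven ranges (Frankl's $n\ge(2r-1)N$, etc.) cover every case $n=tN-(t-r)$ with $1\le r\le t$. Your fallback of accepting ``a slightly larger constant'' would not prove the theorem as stated.

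Blackburn's original argument sidesteps this entirely: he does not count over all of $\binom{[n]}{N}$ but over a small explicit family of $N$-sets and proves by a direct combinatorial argument, with no appeal to deep extremal results, that each $\mathbf{c}\in\mathcal{C}^{\ast}$ is identified by enough of them to give the exact leading constant after the same double count.
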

    When $q< 2^t$, the bound in Theorem~\ref{thm:Blackburn2003} viewed as a function of $n$ is better than that in Theorem~\ref{thm:Stinson-wei1998}.
    Shangguan et al.~\cite{Shangguan2017} further improved this bound when $cq\leq t$ for some constant $c$.
    \begin{theorem}[\cite{Shangguan2017}]\label{thm:shangguan2017-upper-bound}
    	Suppose $\mathcal{C}\subseteq Q^n$ is a $t$-frameproof code of maximum size $m$. Then we have
    	\begin{equation*}
    	m_q(n,t)\leq m \leq \binom{n}{\lceil \frac{n(q-1)}{\binom{t}{2}}\rceil} q^{\lceil \frac{n(q-1)}{\binom{t}{2}}\rceil} +t
    	 \leq  q^{\lceil \frac{n(q-1)}{\binom{t}{2}}\rceil \log_q \frac{eq\binom{t}{2}}{q-1}} +t.
    	\end{equation*}
    \end{theorem}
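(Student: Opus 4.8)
The plan is to follow the \emph{local identifying set} paradigm that is standard for frameproof codes. For two codewords $\mathbf{c},\mathbf{c}'\in Q^n$ write $A(\mathbf{c},\mathbf{c}')=\{i\in[n]:c_i=c'_i\}$ for their agreement set, and record the elementary reformulation: $\mathcal{C}$ is $t$-frameproof if and only if for every $\mathbf{c}\in\mathcal{C}$ one cannot cover $[n]$ using at most $t$ of the sets $\{A(\mathbf{c},\mathbf{c}'):\mathbf{c}'\in\mathcal{C}\setminus\{\mathbf{c}\}\}$; indeed $\mathbf{c}\in\desc(\{\mathbf{c}^1,\dots,\mathbf{c}^t\})$ holds exactly when every coordinate lies in some $A(\mathbf{c},\mathbf{c}^j)$. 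So the entire $t$-frameproof condition says that, from the vantage point of each codeword, the agreement sets are not ``spread out enough to cover $[n]$ with only $t$ of them.''

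Set $k:=\lceil n(q-1)/\binom{t}{2}\rceil$. The reduction I would carry out is: \emph{after deleting at most $t$ codewords, every remaining $\mathbf{c}$ admits a set $S_{\mathbf{c}}\in\binom{[n]}{k}$ with $S_{\mathbf{c}}\not\subseteq A(\mathbf{c},\mathbf{c}')$ for all $\mathbf{c}'\in\mathcal{C}\setminus\{\mathbf{c}\}$}, equivalently $\mathbf{c}$ is the unique codeword whose restriction to $S_{\mathbf{c}}$ equals $\mathbf{c}|_{S_{\mathbf{c}}}$. Granting this, the map $\mathbf{c}\mapsto(S_{\mathbf{c}},\mathbf{c}|_{S_{\mathbf{c}}})$ is injective on the surviving codewords, whence $m-t\le\binom{n}{k}q^{k}$, which is the first displayed bound. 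The second follows from $\binom{n}{k}\le(en/k)^{k}$ together with $n/k\le\binom{t}{2}/(q-1)$, since then $\binom{n}{k}q^{k}\le\big(eqn/k\big)^{k}\le\big(eq\binom{t}{2}/(q-1)\big)^{k}=q^{\,k\log_q(eq\binom{t}{2}/(q-1))}$. The deletion step uses $m>t$, which is harmless since otherwise the theorem is trivial.

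The heart of the matter, and the step I expect to be the real obstacle, is producing an identifying set as small as $k$. I would build $S_{\mathbf{c}}$ greedily: maintain a set $S$ together with the ``collision class'' $\mathcal{D}(S)=\{\mathbf{c}'\ne\mathbf{c}:S\subseteq A(\mathbf{c},\mathbf{c}')\}$, and add one coordinate at a time. Adjoining a coordinate $i$ keeps in $\mathcal{D}(S\cup\{i\})$ only those members of $\mathcal{D}(S)$ still carrying the symbol $c_i$ at $i$, while the ones that drop out split into at most $q-1$ classes according to the symbol they do carry there; this bounded splitting is exactly where the hypothesis ``$q$ small relative to $t$'' must be spent. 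The naive analysis — partition $[n]$ into $\lceil n/k\rceil$ blocks of size $\le k$, each block forced inside some agreement set — only yields an identifying set of size $\lceil n/t\rceil$ and merely recovers the Blackburn-type bound of Theorem~\ref{thm:Blackburn2003}. The improvement to $\lceil n(q-1)/\binom{t}{2}\rceil$ requires showing that if $\mathbf{c}$ had no identifying set of size $k$, then by repeatedly passing to a majority value class one can assemble at most $t$ codewords whose agreement sets with $\mathbf{c}$ cover all of $[n]$, contradicting $t$-frameproofness; and the bookkeeping that lets $t$ codewords (rather than $\lceil n/t\rceil$) suffice is precisely the accounting of the $q-1$ value classes against the $\binom{t}{2}$ pairs of the assembled attack set. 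Making that accounting close with the stated constant is the delicate part of the argument.
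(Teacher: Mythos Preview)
The paper does not prove Theorem~\ref{thm:shangguan2017-upper-bound}; it is quoted from \cite{Shangguan2017} as a benchmark against which the paper's own Theorem~\ref{thm:general-upper-bound} is compared. So there is no in-paper proof to match your attempt against.

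Judged on its own, your framework is exactly the one used in \cite{Shangguan2017}: encode each codeword $\mathbf{c}$ by its ``graph'' $S_{\mathbf{c}}=\{(i,c_i):i\in[n]\}\subseteq [n]\times Q$, observe that $t$-frameproofness is precisely the statement that $\{S_{\mathbf{c}}:\mathbf{c}\in\mathcal{C}\}$ is a $t$-cover-free family on this ground set, and then run a F\"uredi-type private-subset argument to show that, after discarding at most $t$ codewords, every remaining $\mathbf{c}$ possesses a private $k$-subset of $S_{\mathbf{c}}$ with $k=\lceil n(q-1)/\binom{t}{2}\rceil$. Since such a subset has all first coordinates distinct, the injection $\mathbf{c}\mapsto (S_{\mathbf{c}},\mathbf{c}|_{S_{\mathbf{c}}})$ lands in $\binom{[n]}{k}\times Q^{k}$, and your derivation of both displayed inequalities from this point is correct.

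The genuine gap is the existence of the private $k$-subset itself. You explicitly stop short of it (``making that accounting close with the stated constant is the delicate part''), and the greedy sketch you give --- add coordinates one at a time, track how the collision class splits into $\le q-1$ pieces --- does not by itself produce the constant $\binom{t}{2}$. What actually drives that constant is F\"uredi's chain argument adapted to this setting: starting from $\mathbf{c}^0$, greedily pick $\mathbf{c}^1,\dots,\mathbf{c}^{t-1}$ so that each $\mathbf{c}^{j}$ maximises $|S_{\mathbf{c}^0}\cap\cdots\cap S_{\mathbf{c}^{j}}|$; set $b_j=|S_{\mathbf{c}^0}\cap\cdots\cap S_{\mathbf{c}^{j}}|$ and note $b_0=n$. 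The $t$-cover-free property forces, for every further $\mathbf{c}'$, some coordinate of $S_{\mathbf{c}^0}$ outside $S_{\mathbf{c}^1}\cup\cdots\cup S_{\mathbf{c}^{t-1}}\cup S_{\mathbf{c}'}$, and since coordinates removed at step $j$ land in at most $q-1$ symbol classes, maximality of the chain yields $b_{j-1}-b_j\ge b_{t-1}/(q-1)$ for each $j$; summing over $j=1,\dots,t-1$ and using $b_{t-1}\le b_j$ gives a telescoping bound that pins $b_{t-1}$ below roughly $n(q-1)/\binom{t}{2}$, and the intersection $S_{\mathbf{c}^0}\cap\cdots\cap S_{\mathbf{c}^{t-1}}$ then contains the desired private subset. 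Your proposal names the right ingredients (the $q-1$ branching and the $t$-member attack set) but does not assemble them into this telescoping inequality, which is where the $\binom{t}{2}$ genuinely comes from.
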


   When $q=2$, the bound in Theorem~\ref{thm:shangguan2017-upper-bound} is superior to that in Theorem~\ref{thm:Blackburn2003} for $t\geq 25$.

Our second result provides a better general upper bound for wide-sense $t$-frameproof codes with $t\geq 3$.

      \begin{theorem}\label{thm:general-upper-bound}
      	 Let $t\geq 3$. Then
      	 \begin{equation*}
      	  m_q(n,t) \leq \binom{n}{\lceil \frac{n-t+1}{\binom{t}{2}} \rceil} +t.
      	 \end{equation*}
      \end{theorem}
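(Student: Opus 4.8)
The plan is to encode the codewords as an antichain of \emph{small} subsets of $[n]$ and then apply the Lubell--Yamamoto--Meshalkin (LYM) inequality; write $k:=\lceil\frac{n-t+1}{\binom t2}\rceil$. If $m\le t$ there is nothing to prove, so assume $m\ge t+1$. Note that then every $X\subseteq\mathcal C$ with $1\le|X|\le t-1$ has $U(X)\neq\emptyset$, since otherwise $\wdesc(X)=Q^n$ would meet $\mathcal C$ outside $X$, contradicting Definition~\ref{orgde}.

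First I would record the correspondence with cover-free families announced in the abstract. For a coordinate $i$ and a symbol $a$ put $B_{i,a}=\{\mathbf c\in\mathcal C:c_i=a\}$, let $\Gamma=\{(i,a):B_{i,a}\neq\emptyset\}$, and attach to $\mathbf c\in\mathcal C$ the incidence set $\beta(\mathbf c)=\{(i,c_i):i\in[n]\}\subseteq\Gamma$. Unravelling $\wdesc$, one checks that $\mathcal C$ is wide-sense $t$-frameproof exactly when, for all distinct $\mathbf c^0,\mathbf c^1,\dots,\mathbf c^s\in\mathcal C$ with $s\le t$, the intersection $\bigcap_{j\ge1}\beta(\mathbf c^j)$ is not contained in $\beta(\mathbf c^0)$; complementing inside $\Gamma$, this is exactly the statement that $\{\Gamma\setminus\beta(\mathbf c):\mathbf c\in\mathcal C\}$ is a $t$-cover-free family. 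From this (or directly) one extracts an antichain: fix $X=\{\mathbf c^1,\dots,\mathbf c^{t-1}\}\subseteq\mathcal C$, let $v_i(X)$ be the common value of $X$ on $i\in U(X)$, and set $R(\mathbf c):=\{i\in U(X):c_i\neq v_i(X)\}$ for $\mathbf c\in\mathcal C\setminus X$. Applying Definition~\ref{orgde} to $X\cup\{\mathbf c'\}$ (a set of size $\le t$) shows, for distinct $\mathbf c,\mathbf c'\notin X$, that $R(\mathbf c)\not\subseteq R(\mathbf c')$; applying it to $X$ itself shows $R(\mathbf c)\neq\emptyset$. Hence $\{R(\mathbf c):\mathbf c\in\mathcal C\setminus X\}$ is an antichain of nonempty subsets of $[n]$ and $\mathbf c\mapsto R(\mathbf c)$ is injective. (Here $|X|=t-1\ge2$ uses the hypothesis $t\ge3$.)

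The decisive step, and the one I expect to be hardest, is to choose $\mathbf c^1,\dots,\mathbf c^{t-1}$ (discarding if necessary a bounded number of further codewords, which is what the additive $+t$ pays for) so that $|R(\mathbf c)|\le k$ for every surviving $\mathbf c$. The mechanism should be a greedy/pigeonhole selection governed by the $\binom t2$ pairwise comparisons among $\le t$ codewords: one tracks the current agreement set $U(\{\mathbf c^1,\dots,\mathbf c^j\})$ and its value pattern, each time adjoining a codeword that disagrees with that pattern on as many of the remaining coordinates as possible, so that the agreement set shrinks fast; the coordinates outside the final agreement set distribute among $\binom t2$ disagreement sets indexed by pairs, and averaging over these pieces — after setting aside the $t-1$ coordinates consumed by the successive choices — forces the residual disagreement count of every codeword below $\lceil\frac{n-t+1}{\binom t2}\rceil=k$. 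Degenerate situations (for instance when the whole code sits in a small Hamming ball about one codeword, so the selection cannot shrink the agreement set) must be handled separately, but there $m$ is already well below $\binom nk+t$, so they are not the bottleneck. Granting this, $\{R(\mathbf c)\}$ is an antichain in $2^{[n]}$ with all members of size in $[1,k]$, and since $k\le n/2$ for all $t\ge3$ and all $n\ge2$, the LYM inequality (using $\binom nj\le\binom nk$ for $j\le k\le n/2$) gives $|\mathcal C\setminus X|\le\binom nk$, whence $m\le\binom nk+t$.
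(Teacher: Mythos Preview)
Your antichain construction is correct, but the ``decisive step'' --- forcing $|R(\mathbf c)|\le k$ --- has a genuine gap, and the greedy mechanism you sketch does not close it. Write $u_j=|U(\{\mathbf c^1,\dots,\mathbf c^j\})|$, so $u_1=n$. Adjoining at stage $j$ a codeword of maximum disagreement guarantees $|R(\mathbf c)|\le|R_{X_{j-1}}(\mathbf c)|\le u_{j-1}-u_j$ for every surviving $\mathbf c$; together with $|R(\mathbf c)|\le u_{t-1}$ these are $t-1$ upper bounds summing to $n$, so at best $|R(\mathbf c)|\le n/(t-1)$. That yields only $m\le\binom{n}{\lceil n/(t-1)\rceil}+t$, exponentially weaker than the theorem. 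Your appeal to ``$\binom t2$ disagreement sets indexed by pairs'' does not repair this: with $|X|=t-1$ there are only $\binom{t-1}{2}$ pairs inside $X$, the pairwise disagreement sets are not disjoint, and in any case they cover $[n]\setminus U(X)$, whereas $R(\mathbf c)\subseteq U(X)$ lies on the other side. More fundamentally, once $|X|=t-1$ the family $\{R(\mathbf c):\mathbf c\notin X\}$ is \emph{only} Sperner: deducing $R(\mathbf c_0)\not\subseteq\bigcup_{j=1}^{s}R(\mathbf c_j)$ from Definition~\ref{orgde} requires $|X|+s\le t$, i.e.\ $s\le1$. So by taking $|X|=t-1$ you have already discarded the cover-free structure that carries the $\binom t2$ denominator.

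The paper proceeds differently and more simply. It fixes a \emph{single} reference codeword $\mathbf c^i$ and proves (Theorem~\ref{thm:connection-cover-free-wFP}) that $\overline{\mathcal X_i}=\{\overline{I(i,j)}:j\ne i\}\subseteq 2^{[n]}$ is a $(t-1)$-cover-free family of size $m-1$; the bound then follows in one line from F\"uredi's Theorem~\ref{thm-furedi}. The $\binom t2$ in the denominator is precisely the content of F\"uredi's argument, which works with minimal own subsets and their witness structure rather than a greedy choice of a reference set. If you wish to avoid the black box you would have to reproduce that argument inside the $(t-1)$-cover-free family $\overline{\mathcal X_i}$, which is a different device from the one you outline.
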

  {\color{black}
  In particular, since a binary $t$-frameproof code  is also a wide-sense $t$-frameproof code, our bound in Theorem~\ref{thm:general-upper-bound} exponentially improves Theorem~\ref{thm:shangguan2017-upper-bound} for $q=2$ and any fixed $t\geq 3$. In fact, when $q=2$ and $n$ is large,  the upper bound in Theorem~\ref{thm:shangguan2017-upper-bound} is $O\left( \binom{n}{\lceil n/\binom{t}{2}\rceil} 2^{\lceil n/\binom{t}{2}\rceil}\right) $ while our bound is $O\left( \binom{n}{\lceil (n-t+1)/\binom{t}{2}\rceil}\right)$. We state it below.
}
     \begin{corollary}
     	 Let $t\geq 3$. Suppose $\mathcal{C}\subseteq \left\lbrace 0,1\right\rbrace^n$ is a $t$-frameproof code of size $m$. Then
     	 $
     	 m \leq \binom{n}{\lceil (n-t+1)/\binom{t}{2} \rceil} +t.
     	 $
     \end{corollary}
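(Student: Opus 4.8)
The plan is to read the corollary off Theorem~\ref{thm:general-upper-bound}; the only point to check is that the binary hypothesis already places us in the wide-sense setting. Recall from the discussion following Definition~\ref{orgde} that over a binary alphabet $\desc(X)=\wdesc(X)$ for every $X\subseteq\{0,1\}^n$, because at each coordinate $i$ the set $\{c_i^1,\dots,c_i^s\}$ is either a singleton (which forces the value of $y_i$) or equals $\{0,1\}$ (which leaves $y_i$ free), and this is exactly the undetectable/detectable split defining $\wdesc$. Hence a binary $t$-frameproof code is, verbatim, a wide-sense $t$-frameproof code over $Q=\{0,1\}$, so $m\le m_2(n,t)$. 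First I would apply Theorem~\ref{thm:general-upper-bound} with $q=2$; the bound $\binom{n}{\lceil (n-t+1)/\binom{t}{2}\rceil}+t$ stated there does not depend on $q$, so no optimization over the alphabet is needed and the corollary follows at once.

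Since the corollary is this immediate, it carries no obstacle of its own; the content lives entirely in Theorem~\ref{thm:general-upper-bound}. If one instead wanted a self-contained argument for the binary case, I would run it through the support encoding: identify each codeword $\mathbf{c}$ with $A_{\mathbf{c}}=\{i:c_i=1\}$. A short computation shows that for binary codewords $\mathbf{c}\in\wdesc(\{\mathbf{c}^1,\dots,\mathbf{c}^s\})$ holds exactly when $\bigcap_{j=1}^s A_{\mathbf{c}^j}\subseteq A_{\mathbf{c}}\subseteq\bigcup_{j=1}^s A_{\mathbf{c}^j}$, equivalently when $A_{\mathbf{c}}\triangle A_{\mathbf{c}^1}$ lies inside the detectable set $\bigcup_{j<k}(A_{\mathbf{c}^j}\triangle A_{\mathbf{c}^k})$. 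So the $t$-frameproof property says: no codeword's support is sandwiched between the intersection and the union of the supports of at most $t$ other codewords. After complementing, this is a union-covering (cover-free type) condition assembled from the $\binom{t}{2}$ pairwise symmetric differences of a $t$-subset of $\mathcal{C}$; one would then discard at most $t$ suitable codewords so as to recognize $\{A_{\mathbf{c}}\}$ as (essentially) a cover-free family --- equivalently the rows of a disjunct matrix with parameter governed by $\binom{t}{2}$ --- and quote the corresponding Sperner-type upper bound, of order $\binom{n}{\lceil (n-t+1)/\binom{t}{2}\rceil}$.

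The main obstacle in that self-contained route is precisely this last step: the sets $A_{\mathbf{c}^j}\triangle A_{\mathbf{c}^k}$ doing the covering are not arbitrary --- they arise, $\binom{s}{2}$ at a time with $s\le t$, from a common family of codewords --- so the hypothesis is a priori weaker than textbook $\binom{t}{2}$-cover-freeness, and one must show that any violation nonetheless forces a genuine cover-free violation on a cleverly chosen ground set of $n-t+1$ coordinates (or set up the disjunct-matrix correspondence so that the standard counting applies directly). Because Theorem~\ref{thm:general-upper-bound} already carries this out, and in the stronger $q$-independent form, the cleanest proof of the corollary is to specialize it; I would then record the comparison with Theorem~\ref{thm:shangguan2017-upper-bound}, whose $q=2$ bound carries an extra factor $2^{\lceil n/\binom{t}{2}\rceil}$, so that removing it makes the improvement exponential in $n$ for every fixed $t\ge 3$.
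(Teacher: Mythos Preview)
Your proposal is correct and takes essentially the same approach as the paper: the corollary is stated immediately after Theorem~\ref{thm:general-upper-bound} and is justified in the surrounding text solely by the observation that a binary $t$-frameproof code is automatically a wide-sense $t$-frameproof code (since $\desc(X)=\wdesc(X)$ when $q=2$), so the bound $m\le m_2(n,t)\le \binom{n}{\lceil (n-t+1)/\binom{t}{2}\rceil}+t$ follows at once. Your additional sketch of a self-contained support-encoding route is extra commentary beyond what the paper does, but your primary derivation matches the paper's exactly.
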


   Moreover, observe that
   \begin{equation*}
    \binom{n}{\lceil \frac{n-t+1}{\binom{t}{2}} \rceil} +t
    \leq
    \left( e\binom{t}{2}\right)^{\lceil\frac{n}{\binom{t}{2}}\rceil} +t
    =
    2^{\lceil \frac{n}{\binom{t}{2}}\rceil \log_2 \left( e\binom{t}{2}\right) }  +t
    \leq
    2^{ \left( \frac{4 \log_2 t +O(1)}{t^2} \right) n} +t.
   \end{equation*}
   Thus $\limsup_{n\rightarrow \infty} {\log_2 m_q(n,t)}/{n} \leq   (4+o(1))\frac{\log_2 t}{t^2}.$
   Actually, by using our proof and the better bound on cover-free families given by D'yachkov and Rykov~\cite{D'yachkov}, this asymptotic bound can be slightly improved as follows.

    \begin{theorem}\label{thm:general-upper-bound-2}
     For any given $q\geq 2$, we have
    \begin{equation*}
   \limsup_{n\rightarrow \infty} \frac{\log_2 m_q(n,t)}{n} \leq   \left( 2+o(1)\right) \frac{\log_2 t}{t^2}.
    \end{equation*}
    In particular, in the binary case this gives an upper bound for binary $t$-frameproof codes.
     \end{theorem}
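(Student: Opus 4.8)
The plan is to mirror the route that leads to Theorem~\ref{thm:general-upper-bound}, but to feed the sharper asymptotic estimate for cover-free families into the last step. Recall that the proof of Theorem~\ref{thm:general-upper-bound} proceeds by associating to a wide-sense $t$-frameproof code a certain combinatorial structure on a ground set of size roughly $n/\binom{t}{2}$: after discarding at most $t-1$ codewords and localizing to an undetectable-position argument, one shows that the incidence structure of the remaining codewords forms an $\overline{r}$-cover-free family (equivalently, the columns of an $\overline{r}$-disjunct matrix) for an appropriate $r$ tied to $t$. Concretely, if $N(\overline{r},m)$ denotes the minimum number of rows of an $\overline{r}$-disjunct matrix with $m$ columns, the argument gives an inequality of the shape $N\bigl(\overline{r},\,m_q(n,t)-t\bigr) \le n/\binom{t}{2} + O(1)$, which is exactly how the binomial bound $\binom{n}{\lceil (n-t+1)/\binom{t}{2}\rceil}+t$ was extracted (there using the elementary Bassalygo-type bound $N(\overline{r},m)\ge c\log_{?} m$ coming from Sperner-type counting).

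The key step is then to replace that elementary lower bound on $N(\overline{r},m)$ by the D'yachkov--Rykov bound~\cite{D'yachkov}, which states that the rate of $\overline{r}$-cover-free families satisfies $\lim_{m\to\infty} \frac{N(\overline{r},m)}{\log_2 m} \ge \frac{r}{2\log_2 e}\cdot\frac{\log_2 r}{r^2}\,(1+o(1))$ — more precisely, the known asymptotics give $N(\overline{r},m) \ge \bigl(\tfrac{\log_2 r}{2r^2} + o(1)\bigr)^{-1}\cdot$ (something of order $\log_2 m$); I will quote it in the normalized form that, for fixed $r$, any $\overline{r}$-cover-free family of size $m$ needs at least $\bigl(\tfrac{r^2}{2\log_2 r}(1+o_r(1))\bigr)^{-1}$... i.e. $\log_2 m \le \bigl(\tfrac{2\log_2 r}{r^2} + o_r(1)\bigr)\,N$. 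Setting $N = n/\binom{t}{2}+O(1)$ and using $r = \Theta(t)$ with $\binom{t}{2} = \Theta(t^2)$, one gets $\log_2 m_q(n,t) \le \bigl(\tfrac{2\log_2 t}{t^2}+o(1)\bigr)\,n$, whence dividing by $n$ and taking $\limsup_{n\to\infty}$ yields the claimed bound. The final sentence of the theorem is immediate since a binary $t$-frameproof code is a wide-sense $t$-frameproof code, so the same inequality bounds $m_2(n,t)$.

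The main obstacle I anticipate is bookkeeping the exact relation between the parameter $r$ of the cover-free family produced by the frameproof argument and the original $t$, together with the exact constant in the D'yachkov--Rykov estimate: the improvement from the constant $4$ (in the remark preceding the theorem) to the constant $2$ hinges on which flavour of "cover-free" one uses ($\overline{r}$-CFF of strength one versus $(1,r)$-CFF) and on plugging in the optimal known rate rather than the crude $\log_2(e\binom{t}{2})$ estimate. Care is also needed because the $o(1)$ in the statement is as $n\to\infty$ for each fixed $t$, but the constant in front, $\tfrac{2\log_2 t}{t^2}$, carries a further $t\to\infty$ asymptotic, so I must be explicit that the $o(1)$ absorbs lower-order terms in $n$ while the leading coefficient is stated up to its own $\bigl(1+o_t(1)\bigr)$ factor — the phrasing "$(2+o(1))\frac{\log_2 t}{t^2}$" in the statement should be read accordingly. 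Modulo these constant-chasing subtleties, no new idea beyond the proof of Theorem~\ref{thm:general-upper-bound} is required.
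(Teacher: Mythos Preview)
Your high-level plan---replace F\"uredi's bound by the D'yachkov--Rykov bound inside the same reduction that proves Theorem~\ref{thm:general-upper-bound}---is exactly what the paper does. However, your description of that reduction is wrong, and following it literally would yield a nonsensical exponent. The proof of Theorem~\ref{thm:general-upper-bound} does \emph{not} produce a cover-free family on a ground set of size roughly $n/\binom{t}{2}$, nor does it discard $t-1$ codewords or ``localize to an undetectable-position argument''. What actually happens (via Theorem~\ref{thm:connection-cover-free-wFP}) is that for each $i$ the family $\overline{\mathcal{X}_i}\subseteq 2^{[n]}$ is a $(t-1)$-cover-free family of size $m-1$ on the \emph{full} ground set $[n]$. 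The factor $\binom{t}{2}$ in the binomial of Theorem~\ref{thm:general-upper-bound} comes entirely from F\"uredi's Theorem~\ref{thm-furedi} applied to this family, not from shrinking the ground set. If you carried out your computation as written---ground set of size $n/\binom{t}{2}$, cover-free parameter $r=\Theta(t)$, then D'yachkov--Rykov---you would obtain $\log_2 m \le \bigl(c\,\tfrac{\log_2 t}{t^4}+o(1)\bigr)n$, with a $t^4$ in the denominator; that is far too strong and should have flagged the misreading.

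The correct argument is a single line once the reduction is stated properly: Theorem~\ref{thm:connection-cover-free-wFP} gives $m_q(n,t)-1\le T(n,t-1)$, and then~\eqref{equa-D'yachkov} with parameter $t-1$ yields
\[
\limsup_{n\to\infty}\frac{\log_2 m_q(n,t)}{n}
\;\le\;
\limsup_{n\to\infty}\frac{\log_2 T(n,t-1)}{n}
\;\le\;
(2+o(1))\,\frac{\log_2(t-1)}{(t-1)^2}
\;=\;
(2+o(1))\,\frac{\log_2 t}{t^2}.
\]
There is no intermediate parameter $r$ to chase: it is exactly $t-1$. Your remark about the binary case is correct.
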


\subsection{Tight bounds for wide-sense $t$-frameproof codes}
      Let $\mathcal{C}\subseteq Q^n$ be a wide-sense $t$-frameproof code, and consider the representation matrix of $\mathcal{C}$ which is an $n\times |\mathcal{C}|$ matrix on $q$ symbols with columns corresponding to codewords in $\mathcal{C}$. For any given wide-sense $t$-frameproof code, we may derive new ones from it by simply permuting the elements $0, 1,\dots, q-1$ in each row separately. Such codes can be considered to be equivalent, and hence we could focus on a standard
      representative. When $q=2$, we say that the representation matrix of $\mathcal{C}\subseteq \left\lbrace 0,1\right\rbrace^n$ is in \emph{standard form} if every row has at most $|\mathcal{C}|/2$ entries of $1$.

     Note that  any permutation matrix represents a binary (wide-sense) $t$-frameproof code for any $t\geq 2$. Guo et al.~\cite{Guo-Stinson-Trung} showed that this is indeed an optimal code  for certain $n$ and proved the uniqueness of  optimal codes.
      \begin{theorem}[\cite{Guo-Stinson-Trung}]\label{thm:Guo-Stinson-Trung}
      	Let $t, n$ be positive integers such that $t \geq 3$
      	and $3 \leq n \leq 3t$. Suppose there exists a binary $t$-frameproof code $\mathcal{C}\subseteq \left\lbrace 0,1\right\rbrace^n$ of size $m$. Then  $m \leq n.$ Equality holds if and only if the representation matrix of $\mathcal{C}$ in standard form is a permutation matrix of degree $n$.
      	

      \end{theorem}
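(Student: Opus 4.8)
My plan is to recast everything in the language of set systems. Identify each binary codeword $\mathbf{c}=(c_1,\dots,c_n)$ with its support $C=\{i\in[n]:c_i=1\}$, so that $\mathcal{C}$ becomes a family of $m$ distinct subsets of $[n]$. Unwinding Definition~\ref{orgde} in the binary case, a codeword $\mathbf{c}$ lies in $\desc(X)$ for $X=\{\mathbf{c}^1,\dots,\mathbf{c}^s\}$ exactly when $\bigcap_{j=1}^{s}C_j\subseteq C\subseteq\bigcup_{j=1}^{s}C_j$; equivalently $X$ \emph{covers} $\mathbf{c}$, meaning that at every coordinate some member of $X$ agrees with $\mathbf{c}$. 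Thus $\mathcal{C}$ is $t$-frameproof iff no codeword can be covered by at most $t$ of the other codewords. I would first normalise: put the representation matrix in standard form and delete every coordinate that is constant on $\mathcal{C}$ (this preserves the frameproof property and never merges two codewords), so that every row weight $w_i$ satisfies $1\le w_i\le m/2$.

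The driving observation is this. Call a coordinate $i$ \emph{private} to $\mathbf{c}$ if $i\in C$ and $w_i=1$, so that $\mathbf{c}$ is the only codeword with a $1$ in coordinate $i$. If $\mathbf{c}$ has a private coordinate then no other codeword ever agrees with $\mathbf{c}$ there, hence $\mathbf{c}$ cannot be covered and is automatically safe. Distinct codewords have distinct private coordinates and a private coordinate occupies a weight-one row, so at most $n$ codewords can have a private coordinate. Therefore, if $\mathcal{C}$ is $t$-frameproof with $m\ge n+1$, at least $m-n\ge1$ codewords have no private coordinate, and it suffices to show that some such codeword $\mathbf{c}$ can be covered by at most $t$ of the others. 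To do this I would select $\mathbf{c}$ to be extremal — say of minimum support, so that $|C|\le n/2$ by a standard-form averaging bound, or chosen so that its cover number is smallest possible by averaging over all codewords — observe that each coordinate of $C$ lies in a heavy ($w_i\ge2$) row and is thus held by some codeword other than $\mathbf{c}$, and then assemble codewords $C_1,\dots,C_s$ with $s\le t$ covering $C$ greedily while enforcing the side condition $\bigcap_j C_j\subseteq C$ — most cleanly by first fixing two codewords whose supports meet only inside $C$, after which adding more codewords cannot spoil it; the degenerate sub-case $C=\emptyset$ reduces to finding at most $t$ codewords with empty intersection, which a short count settles because no coordinate lies in more than $m/2$ codewords. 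The point is to bound the number of codewords used by $\lceil n/3\rceil\le t$, and this is exactly where $n\le3t$ gets used.

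For the equality case, suppose $\mathcal{C}$ is $t$-frameproof with $|\mathcal{C}|=n$. Then the count above must be tight: every codeword has a private coordinate and there are exactly $n$ weight-one rows, so every row of the standardised matrix has weight exactly $1$ and the matrix has exactly $n$ ones. Since each column must contain the private $1$ of some weight-one row, and distinct columns receive distinct rows, this assignment is a bijection, so every column has weight exactly $1$; hence the representation matrix is a permutation matrix of degree $n$. Conversely, a permutation matrix of degree $n$ represents the family $\{\{1\},\dots,\{n\}\}$ of all singletons, which is $t$-frameproof for every $t\ge1$: covering $\{i\}$ would require other singletons whose union contains $i$, which is impossible. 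So equality holds precisely for permutation matrices.

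The main obstacle is the covering construction in the second paragraph. The two reductions there — private coordinates are safe, and there are at most $n$ of them — are soft and do not see the constant $3$. That constant enters only when bounding how many other codewords suffice to cover the support of a minimum-weight (or otherwise extremal) codeword lacking a private coordinate, subject to $\bigcap_j C_j\subseteq C$; squeezing that count down to $\lceil n/3\rceil$ — rather than a weaker quantity such as $|C|$ or $n/2$ — is the delicate step, and I expect it needs a careful case analysis (for instance on the largest row weight, or on whether some codeword has small support) of how the $1$'s in the heavy rows distribute across the codewords. This is presumably why the hypothesis is $n\le3t$ and not merely $n\le2t$.
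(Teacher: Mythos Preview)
The paper does not prove this theorem; it is quoted from \cite{Guo-Stinson-Trung} as background and then strengthened in Theorems~\ref{thm:tight-bound} and~\ref{thm:tight-bound-iff}. The paper's route to those extensions is entirely different from yours: it passes to the coincidence families $\overline{\mathcal{X}_i}$, shows they are $(t-1)$-cover-free and $t$-wise intersecting (Theorem~\ref{thm:connection-cover-free-wFP} and Lemma~\ref{inter}), and then invokes structural results on near-square disjunct matrices (Theorems~\ref{thm:disjunct-n*n} and~\ref{thm:n*(n-1)}) built on the Shangguan--Ge bound for $N^*(t)$. No step in that argument resembles a greedy covering of a single codeword.

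Your direct set-system attack is natural, and the soft parts are fine: after standardisation a codeword with a private (weight-one) coordinate cannot lie in $\desc(X)$ for any $X$ avoiding it, distinct private coordinates are owned by distinct codewords, and if every codeword has one then counting forces a permutation matrix. But the proposal has a genuine gap exactly where you say it does. You assert that a codeword $\mathbf{c}$ with no private coordinate can be covered by at most $\lceil n/3\rceil$ others, yet you provide neither a construction nor any mechanism that would produce the constant $3$ rather than $|C|$ or $n/2$; your phrase ``I expect it needs a careful case analysis'' concedes this. That analysis is not a detail but the whole content of the bound --- without it your argument only handles the easy range $m\le t+1$, where taking $X$ to be all $m-1$ other codewords already works. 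The equality characterisation is hostage to the same missing step: to deduce that \emph{every} codeword in a size-$n$ frameproof code has a private coordinate you need the covering claim for an arbitrary codeword lacking one, not merely for one chosen extremally; and your averaging remark ``$|C|\le n/2$'' bounds the minimum support over all codewords, not over those without a private coordinate.
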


       In \cite{Shangguan2017}, Shangguan et al. 
       showed that the above results are still true for  $3\leq n< \binom{t+1}{2}$.
       More recently, by relating binary frameproof codes to a conjecture of Erd\H{o}s, Frankl and F\"uredi~\cite{erdos-frankl-furedi} on cover-free families, Ge et al.~\cite{Ge-Shangguan-Wang2019} proved the following result.
       \begin{theorem}[\cite{Ge-Shangguan-Wang2019}]\label{thm:Ge-Shangguan-Wang2019}
       	 Let $t, n$ be positive integers such that $t \geq 3$
       	 and $2 \leq n < \frac{15+\sqrt{33}}{24}(t-2)^2$. Suppose there exists a binary $t$-frameproof code $\mathcal{C}\subseteq \left\lbrace 0,1\right\rbrace^n$ of size $m$. Then  $m \leq n.$      	
       \end{theorem}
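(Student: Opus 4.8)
The plan is to translate the frameproof property into a statement about cover-free families and then push through the extremal theory of the latter in the ``small ground set'' regime. Since every subcode of a $t$-frameproof code is again $t$-frameproof, it suffices to derive a contradiction from a binary $t$-frameproof code $\mathcal{C}\subseteq\{0,1\}^n$ with exactly $m=n+1$ codewords, and we may assume its $n\times m$ representation matrix has no repeated column. The ranges $3\le n<\binom{t+1}{2}$ are already handled by Theorem~\ref{thm:Guo-Stinson-Trung} and the extension of Shangguan et al.\ \cite{Shangguan2017}, so I may also assume $n\ge\binom{t+1}{2}$, which in particular gives $m=n+1>t$ and makes the counting below nonvacuous.

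The core step is a reduction to cover-free families. For $\mathbf{c}\in\mathcal{C}$ put $D(\mathbf{c},\mathbf{c}')=\{i\in[n]:c_i\neq c'_i\}$ and $\mathcal{D}_{\mathbf{c}}=\{D(\mathbf{c},\mathbf{c}'):\mathbf{c}'\in\mathcal{C}\setminus\{\mathbf{c}\}\}$. Over $\mathbb{F}_2$ we have $D(\mathbf{c},\mathbf{c}')=\mathrm{supp}(\mathbf{c}+\mathbf{c}')$, so $\mathcal{D}_{\mathbf{c}}$ is a family of $m-1=n$ distinct nonempty subsets of $[n]$; I claim it is a $(t-1)$-cover-free family, i.e.\ no member lies in the union of $t-1$ others. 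Indeed, given distinct $\mathbf{c}^{j},\mathbf{c}^{k_1},\dots,\mathbf{c}^{k_{t-1}}$ in $\mathcal{C}\setminus\{\mathbf{c}\}$, apply Definition~\ref{orgde} to the $t$-set $X=\{\mathbf{c},\mathbf{c}^{k_1},\dots,\mathbf{c}^{k_{t-1}}\}$: since $\mathbf{c}^{j}\notin X=\wdesc(X)\cap\mathcal{C}$, some coordinate $i$ is undetectable for $X$ with $c^{j}_i$ unequal to the common value there, and because $\mathbf{c}\in X$ that common value is $c_i$; hence $c^{k_\ell}_i=c_i$ for all $\ell$ while $c^{j}_i\ne c_i$, i.e.\ $i\in D(\mathbf{c},\mathbf{c}^{j})\setminus\bigcup_\ell D(\mathbf{c},\mathbf{c}^{k_\ell})$. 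Next I would rule out the one tight configuration: if $\mathcal{D}_{\mathbf{c}}$ were the family $\{\{i\}:i\in[n]\}$ of all singletons, then after replacing $\mathcal{C}$ by $\mathcal{C}+\mathbf{c}$ over $\mathbb{F}_2$ (a permutation of symbols in each row, which preserves frameproofness) we would have $\mathcal{C}=\{\mathbf{0},e_1,\dots,e_n\}$ with $e_i$ the unit vectors, and for $n\ge2$ one checks at once that $\mathbf{0}\in\wdesc(\{e_1,e_2\})$, already contradicting $2$-frameproofness.

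At this point everything reduces to an extremal statement about cover-free families that I would prove as the technical heart: for $r\ge2$ and $n$ at most a suitable constant multiple of $r^2$, every $r$-cover-free family on $[n]$ has at most $n$ members, with equality only for the family of all singletons. Applying it with $r=t-1$ under the hypothesis $n\le\frac{15+\sqrt{33}}{24}(t-2)^2$ --- which lies comfortably inside this range --- the family $\mathcal{D}_{\mathbf{c}}$ is a $(t-1)$-cover-free family on $[n]$ of size $n$ that is not the singleton family, which is impossible; hence $m\le n$. The peculiar constant $\frac{15+\sqrt{33}}{24}$ is precisely what this cover-free argument delivers: it is the larger root of the quadratic $12x^2-15x+4$, which arises as the solution of the optimization of the weights in the cover-free counting.

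I expect this last estimate to be the main obstacle. The reduction above is short and essentially formal, whereas proving that an $r$-cover-free family on $n\lesssim r^2$ points has at most $n$ members --- together with uniqueness of the singleton configuration --- requires a careful Erd\H{o}s--Frankl--F\"uredi-type analysis \cite{erdos-frankl-furedi}: a weighted double count over the sizes of the member sets, followed by a separate and somewhat intricate treatment of the near-extremal families, and it is the optimization in that step that forces the quadratic and hence the stated threshold. I note, finally, that the reduction actually produces a $(t-1)$-cover-free family, so once the cover-free bound is available for the parameter $t-1$ one obtains the stronger threshold $\frac{15+\sqrt{33}}{24}(t-1)^2$; the theorem above claims only the $(t-2)^2$ version.
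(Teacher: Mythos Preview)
Your approach is correct and coincides with the paper's own route to its stronger Theorem~\ref{thm:tight-bound}: pass from the code to the family $\overline{\mathcal{X}_i}$ (your $\mathcal{D}_{\mathbf{c}}$), observe it is $(t-1)$-cover-free of size $n$ on $[n]$ (the paper's Theorem~\ref{thm:connection-cover-free-wFP}), invoke the Shangguan--Ge threshold together with the uniqueness of the singleton family (the paper's Theorem~\ref{thm:disjunct-n*n}), and rule out the singleton configuration. The only visible difference is in this last step: you eliminate the singleton family by an explicit translation $\mathcal{C}\mapsto\mathcal{C}+\mathbf{c}$ and the observation $\mathbf{0}\in\wdesc(\{e_1,e_2\})$, whereas the paper argues abstractly via Lemma~\ref{inter} that $\overline{\mathcal{X}_i}$ must be $t$-wise intersecting, which the singleton family is not. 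Both are short; the paper's version has the advantage of working verbatim over any alphabet $Q$, which is what lets Theorem~\ref{thm:tight-bound} be stated for $q$-ary wide-sense codes rather than just binary ones.

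Your closing remark is exactly right: the reduction yields a $(t-1)$-cover-free family, so the argument naturally delivers the $(t-1)^2$ threshold of Theorem~\ref{thm:tight-bound}; the $(t-2)^2$ in Theorem~\ref{thm:Ge-Shangguan-Wang2019} reflects the original route of \cite{Ge-Shangguan-Wang2019}, which (as recorded around Proposition~\ref{prop-relation-N(t)}) passed through $N^*(t-2)\le N_2(t)$ and hence lost one unit of the parameter. One small cosmetic point: your preliminary appeal to Theorem~\ref{thm:Guo-Stinson-Trung} and \cite{Shangguan2017} to secure $m>t$ is unnecessary, since the $(t-1)$-cover-free verification only requires $|X|\le t$ and the definition already quantifies over all such $X$.
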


       However, Theorem~\ref{thm:Ge-Shangguan-Wang2019} doesn't characterize the optimal codes. Our third result improves and generalizes Theorem~\ref{thm:Ge-Shangguan-Wang2019} to wide-sense frameproof codes, which is stated as follows.

       \begin{theorem}\label{thm:tight-bound}
       	Let $t, n$ be positive integers such that $t \geq 3$
       	and $2 \leq n < \frac{15+\sqrt{33}}{24}(t-1)^2$. Then  $m_q(n,t) \leq n.$      	
       \end{theorem}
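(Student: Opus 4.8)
The plan is to argue by contradiction. Suppose $\mathcal{C}\subseteq Q^n$ is a wide-sense $t$-frameproof code with $m:=|\mathcal{C}|\geq n+1$; I will contradict the hypothesis $n<\frac{15+\sqrt{33}}{24}(t-1)^2$. Two harmless reductions come first. If some coordinate is constant on $\mathcal{C}$, it can never serve as the coordinate at which a coalition is unanimous and the framed codeword dissents, so deleting it preserves the wide-sense $t$-frameproof property while only decreasing $n$; hence we may assume every coordinate is non-constant. And if $m\leq t+1$ we are already done: for each $\mathbf{c}^*\in\mathcal{C}$, applying Definition~\ref{orgde} to the coalition $X=\mathcal{C}\setminus\{\mathbf{c}^*\}$ (allowed since $|X|=m-1\leq t$) produces a coordinate at which $X$ is unanimous but $\mathbf{c}^*$ dissents, and two distinct choices of $\mathbf{c}^*$ cannot give the same coordinate (that would force all of $\mathcal{C}$ to be unanimous there), so the $m$ codewords yield $m$ distinct coordinates and $m\leq n$. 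So assume from now on $m\geq t+2$.

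The core is a reduction to cover-free families. Fix a reference codeword $\mathbf{c}^*\in\mathcal{C}$ and, for each $\mathbf{c}\in\mathcal{C}\setminus\{\mathbf{c}^*\}$, put $E(\mathbf{c}):=\{\,i\in[n]:c_i\neq c^*_i\,\}$. The trick is that, $\mathbf{c}^*$ being a codeword, it may be placed inside a coalition: for any distinct $\mathbf{c}^0,\mathbf{c}^1,\dots,\mathbf{c}^{t-1}\in\mathcal{C}\setminus\{\mathbf{c}^*\}$ (which exist since $m-1\geq t$), apply $\mathbf{c}^0\notin\wdesc(X)$ with $X=\{\mathbf{c}^*,\mathbf{c}^1,\dots,\mathbf{c}^{t-1}\}$; an undetectable coordinate of this $X$ must carry the value $c^*_i$, so the coordinate witnessing $\mathbf{c}^0\notin\wdesc(X)$ is an $i$ with $c^0_i\neq c^*_i$ and $c^1_i=\dots=c^{t-1}_i=c^*_i$, i.e.\ $i\in E(\mathbf{c}^0)\setminus\bigl(E(\mathbf{c}^1)\cup\dots\cup E(\mathbf{c}^{t-1})\bigr)$. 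Thus $\{E(\mathbf{c}):\mathbf{c}\neq\mathbf{c}^*\}$ is a family of subsets of $[n]$ in which no member is covered by the union of any $t-1$ others; in particular no member is covered by the union of any $t-2$ others, and the union of the family is all of $[n]$.

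It remains to control the alphabet and finish. If two distinct codewords $\mathbf{c},\mathbf{c}'$ satisfy $E(\mathbf{c})=E(\mathbf{c}')=:S$, then $U(\{\mathbf{c}^*,\mathbf{c}\})=[n]\setminus S$ with common value $c^*_i$ there, and $\mathbf{c}'$ agrees with $\mathbf{c}^*$ on $[n]\setminus S$, so $\mathbf{c}'\in\wdesc(\{\mathbf{c}^*,\mathbf{c}\})\cap\mathcal{C}$ with $\mathbf{c}'\notin\{\mathbf{c}^*,\mathbf{c}\}$, contradicting the wide-sense $t$-frameproof property; hence the sets $E(\mathbf{c})$ are pairwise distinct, and we obtain a cover-free family on $[n]$ with $m-1$ members in which no set is covered by the union of $t-2$ others. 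The cover-free bound of Ge, Shangguan and Wang~\cite{Ge-Shangguan-Wang2019} towards the Erd\H{o}s--Frankl--F\"uredi conjecture~\cite{erdos-frankl-furedi} shows such a family has at most $n$ members whenever $n<\frac{15+\sqrt{33}}{24}(t-1)^2$; thus $m-1\leq n$, which already excludes $m\geq n+2$. For the borderline $m=n+1$, the family has exactly $n$ members on $[n]$ with union $[n]$ while $n$ is strictly below the threshold, so the structural part of the same extremal result forces it to be the family of all $n$ singletons. Then $\mathcal{C}=\{\mathbf{c}^*\}\cup\{\mathbf{c}^{(1)},\dots,\mathbf{c}^{(n)}\}$ with $\mathbf{c}^{(i)}$ differing from $\mathbf{c}^*$ only in coordinate $i$; but then every coordinate other than $1,2$ is undetectable for $\{\mathbf{c}^{(1)},\mathbf{c}^{(2)}\}$ with common value $c^*_i$, which $\mathbf{c}^*$ matches, so $\mathbf{c}^*\in\wdesc(\{\mathbf{c}^{(1)},\mathbf{c}^{(2)}\})\setminus\{\mathbf{c}^{(1)},\mathbf{c}^{(2)}\}$, a final contradiction. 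Hence $m_q(n,t)\leq n$.

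The main obstacle I anticipate is the endgame for $m=n+1$ in the previous paragraph: this is the point where the precise constant $\frac{15+\sqrt{33}}{24}$ and the improvement over the exponent in Theorem~\ref{thm:Ge-Shangguan-Wang2019} are used, and it needs not just the numerical cover-free bound but the classification of the extremal cover-free families (the singleton families) throughout the stated range of $n$. A secondary point is making the reduction genuinely uniform in $q$: the deletion of constant coordinates together with the collision argument above are what let the argument run over $[n]$ rather than over $[n]\times Q$ for every alphabet size.
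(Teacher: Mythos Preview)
Your approach is essentially the paper's: reduce via the coincidence family $\overline{\mathcal{X}_i}$ (your sets $E(\mathbf{c})$) to a $(t-1)$-cover-free family on $[n]$ with $m-1$ members, and then combine the numerical bound with the extremal characterization to rule out $m=n+1$. Two small corrections: the numerical bound $N^*(t-1)\geq\frac{15+\sqrt{33}}{24}(t-1)^2$ is due to Shangguan and Ge~\cite{Shangguan2016-disjunct-matrices}, not to~\cite{Ge-Shangguan-Wang2019}; and it is the $(t-1)$-cover-free property (not the weaker $(t-2)$ version you drop to) that matches the threshold $(t-1)^2$.

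You are right to flag the endgame as the real obstacle, and the gap is exactly where you suspect. The uniqueness of the extremal family---that an $n$-member $(t-1)$-cover-free family on $[n]$ with $n<\frac{15+\sqrt{33}}{24}(t-1)^2$ must consist of the $n$ singletons---is \emph{not} available in~\cite{Shangguan2016-disjunct-matrices} or~\cite{Ge-Shangguan-Wang2019}; the paper proves it as a separate new result (Theorem~\ref{thm:disjunct-n*n}), by an induction built on Lemma~\ref{lemma:key}, which shows that any $t$-disjunct $n\times w$ matrix with $w\geq n-1$ and $n$ below the threshold has an isolated column. Once that is in hand, your final contradiction via $\mathbf{c}^*\in\wdesc(\{\mathbf{c}^{(1)},\mathbf{c}^{(2)}\})$ is correct and is equivalent to the paper's observation that the singleton family cannot be $t$-wise intersecting (Lemma~\ref{inter}). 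Your preliminary reductions (deleting constant coordinates, disposing of $m\leq t+1$) are harmless but unnecessary.
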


      To represent a $q$-ary code $\mathcal{C}\subseteq Q^n$ in a matrix, we can similarly define its standard form.
      For each row $r$ and each $i\in Q$, let $\lambda_i(r)$ be the number of $i$'s in the row $r$. Hence $|\mathcal{C}|=\sum_{i=0}^{q-1}\lambda_i(r)$ for each row $r$. We say that the representation matrix of $\mathcal{C}$ is in standard form if for each row $r$ we have $\lambda_0(r)\geq \lambda_1(r) \geq \dots \geq \lambda_{q-1}(r)$.



       Our last result determines the optimal codes in Theorem~\ref{thm:Ge-Shangguan-Wang2019} and Theorem~\ref{thm:tight-bound}. 

       \begin{theorem}\label{thm:tight-bound-iff}
       	 Let $t, n$ be positive integers such that $t \geq 3$ and $3 \leq n < \frac{15+\sqrt{33}}{24}(t-1)^2$. A code $\mathcal{C}\subseteq Q^n$ is a wide-sense $t$-frameproof code of size $n$ if and only if its representation matrix in standard form is a permutation matrix of degree $n$.
       \end{theorem}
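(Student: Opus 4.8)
\emph{Proof idea.} For the ``if'' direction, relabelling the $q$ symbols inside a single row of a representation matrix preserves the wide-sense $t$-frameproof property, so a code whose standard form is a permutation matrix of degree $n$ is equivalent to, and has the same size $n$ as, the code $\mathcal{C}_0=\{e_1,\dots,e_n\}\subseteq Q^n$ of all weight-one vectors. For $X=\{e_i:i\in I\}\subseteq\mathcal{C}_0$ with $|I|\le t$, the undetectable coordinates are exactly those outside $I$ (with common value $0$), so $\wdesc(X)=\{\mathbf{y}\in Q^n:\mathrm{supp}(\mathbf{y})\subseteq I\}$, whence $\wdesc(X)\cap\mathcal{C}_0=\{e_i:i\in I\}=X$. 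Thus $\mathcal{C}_0$, and hence any code whose standard form is a permutation matrix, is a wide-sense $t$-frameproof code of size $n$.

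For the ``only if'' direction, let $\mathcal{C}\subseteq Q^n$ be wide-sense $t$-frameproof with $|\mathcal{C}|=n$; by Theorem~\ref{thm:tight-bound} such a code is optimal. Let $M$ be its standard-form matrix, with rows (coordinates) satisfying $\lambda_0(r)\ge\lambda_1(r)\ge\cdots$. First I would record that $M$ has no all-zero row: such a coordinate is undetectable with common value $0$ for every coalition, so deleting it would yield a wide-sense $t$-frameproof code in $Q^{n-1}$ of size $n$, contradicting $m_q(n-1,t)\le n-1$ (Theorem~\ref{thm:tight-bound}, applicable since $n\ge 3$). The plan is then to re-run the proof of Theorem~\ref{thm:tight-bound} in the equality case. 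That proof associates to $\mathcal{C}$ a cover-free family (a disjunct matrix) on a ground set of size at most $n$, with one member per codeword, obtained from the ``$0$-heavy'' coordinate structure of $M$, and bounds $|\mathcal{C}|$ by an Erd\H{o}s--Frankl--F\"uredi-type inequality valid precisely when $n<\frac{15+\sqrt{33}}{24}(t-1)^2$. When $|\mathcal{C}|=n$, every inequality in that chain is an equality, forcing the associated family to be an extremal cover-free family on $[n]$; in this range the only possibility is the family of $n$ singletons, which is the equality case of the bound behind Theorem~\ref{thm:Ge-Shangguan-Wang2019}.

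It then remains to decode this rigidity back into a statement about $M$. The correspondence forces $N(\mathbf{c}):=\{r:c_r\ne 0\}$ to be a singleton for every codeword $\mathbf{c}$, and since $M$ has no all-zero row and there are exactly $n$ codewords, $\mathbf{c}\mapsto N(\mathbf{c})$ is a bijection onto $\binom{[n]}{1}$. Hence each codeword has a single nonzero coordinate, distinct codewords carry that entry in distinct rows, and each row of $M$ consists of one nonzero symbol and $n-1$ zeros. Because $n\ge 3$ gives $n-1>1$, the ordering $\lambda_0(r)\ge\lambda_1(r)\ge\cdots$ forces the nonzero symbol in each row to be $1$; hence $M$ is a $0$--$1$ matrix with exactly one $1$ per row and, by bijectivity, one $1$ per column, i.e.\ a permutation matrix of degree $n$.

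The main obstacle is the reduction to a single cover-free family, and in particular showing that no slack remains in the equality case. In the binary case one gets a clean two-sided cover-free condition, but in general $\wdesc(X)$ involves undetectable coordinates carrying arbitrary common symbols, not just $0$, so $N(\mathbf{c})\subseteq\bigcup_{\mathbf{c}'\in X}N(\mathbf{c}')$ alone does not imply $\mathbf{c}$ is framed; one must pass to a minimal framing coalition and use the standard form (which makes $0$ the common value at enough shared coordinates) to conclude that $\{N(\mathbf{c})\}$ is genuinely $\overline{t-1}$-cover-free with $n$ distinct members. This is exactly where $n\ge 3$ is essential: for $n=2$ the reduction breaks down and there are size-$n$ wide-sense $t$-frameproof codes, such as $\{(0,\dots,0),(1,\dots,1)\}$, whose standard form is not a permutation matrix, so the hypothesis $n\ge 3$ is used both to exclude these and to make the standard form of the extremal code unique.
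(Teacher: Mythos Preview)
Your ``if'' direction is fine, but the ``only if'' direction contains a genuine gap and misidentifies the mechanism behind Theorem~\ref{thm:tight-bound}. That proof does \emph{not} associate to $\mathcal{C}$ the support family $\{N(\mathbf{c})=\{r:c_r\neq 0\}:\mathbf{c}\in\mathcal{C}\}$; for $q>2$ this family need not be cover-free, for exactly the reason you yourself flag: if an undetectable coordinate $r$ carries a common nonzero value $a$, then $r\in N(\mathbf{c}^0)$ only says $c_r^0\neq 0$, not $c_r^0=a$, so $N(\mathbf{c}^0)\subseteq\bigcup_j N(\mathbf{c}^j)$ does not force $\mathbf{c}^0\in\wdesc(\{\mathbf{c}^1,\dots,\mathbf{c}^s\})$. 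Your proposed repair via ``minimal framing coalitions and the standard form'' is a hope rather than an argument---standard form controls row-wise symbol multiplicities but says nothing about which nonzero symbols the particular codewords $\mathbf{c}^1,\dots,\mathbf{c}^s$ carry at a given coordinate---and even the preliminary claim that the sets $N(\mathbf{c})$ are pairwise distinct (needed to get a family of size $n$) is unsupported when $q>2$.

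The paper takes a different route: it fixes one codeword $\mathbf{c}^i$ and works with the coincidence family $\overline{\mathcal{X}_i}=\{\overline{I(i,j)}:j\neq i\}$, which by Theorem~\ref{thm:connection-cover-free-wFP} is a $(t-1)$-cover-free family of size $n-1$ (not $n$) on $[n]$. This is one column short of the $n\times n$ situation used in Theorem~\ref{thm:tight-bound}, so there is no ``equality case'' to re-run; instead a new stability result is proved (Theorem~\ref{thm:n*(n-1)}): any $n\times(n-1)$ $(t-1)$-disjunct matrix in the range $2<n<\frac{15+\sqrt{33}}{24}(t-1)^2$ is, after permuting rows, a permutation matrix of degree $n-1$ with one extra row appended. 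Combining this with the fact that $\overline{\mathcal{X}_i}$ is $t$-wise intersecting (Lemma~\ref{inter}) forces that extra row to be all $1$'s, which determines $\mathcal{X}_i$ completely and lets one reconstruct the representation matrix $M$ and check it is a permutation matrix. The ingredient your outline is missing is precisely this $n\times(n-1)$ stability theorem; without it, neither your support-family route nor the actual coincidence-family route reaches the conclusion.
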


       We remark that the condition $t\geq 3$ in Theorem~\ref{thm:tight-bound} and Theorem~\ref{thm:tight-bound-iff} is necessary. Take the $n\times n$ identity matrix $I_n$ and append a column of $1$'s to it. Clearly this gives a wide-sense $2$-frameproof code, and hence Theorem~\ref{thm:tight-bound} does not hold for $t=2$. Moreover, let
       \begin{equation*}
       A:= \left( \begin{array}{cccc}
       	1 & 1 & 0 & 0 \\
       	0 & 1 & 1 & 0 \\
       	1 & 0 & 1 & 0 \\
       	0 & 0 & 0 & 1 \\
       \end{array} \right)
              \end{equation*}
              and note that for $t=2$ and $n\geq 4$, the $n\times n$ matrix
               $\left( \begin{array}{cc}
             	A & 0 \\
             	0 & I_{n-4} \\
             \end{array} \right)
                          $
        represents a wide-sense $2$-frameproof code.  Thus Theorem~\ref{thm:tight-bound-iff} does not hold for $t=2$ and $n\geq 4$. And it is easy to check that Theorem~\ref{thm:tight-bound-iff} does hold for the last case $t=2$ and $n=3$.
       Moreover, the condition $n\geq 3$ in Theorem~\ref{thm:tight-bound-iff} is also necessary, as one can check directly that when $n= 2$ the representation matrix in standard form can be $\left( \begin{array}{cc}
       1 & 0 \\
       0 & 0 \\
       \end{array} \right) $ which is not a permutation matrix.

 \subsection{Structure of the paper}
      The rest of the paper is organized as follows. In Section~\ref*{section:preliminaries}, we introduce some useful notations and results from extremal set theory. By using several results from Sperner theory, we prove Theorem~\ref{thm:2-wFP-main} in Section~\ref{section:pf-of-2wFP}. In Section~\ref{section:cover-free-and-wFP}, we develop a connection between wide-sense frameproof codes and cover-free families and prove Theorem~\ref{thm:general-upper-bound}.  In Section~\ref{section:pf-of-tight-bound}, we determine optimal cover-free families of size $n$ for certain $n$ and use it to prove Theorem~\ref{thm:tight-bound}. Then we prove a stability result for cover-free families of size $n-1$ in Section~\ref{section:pf-of-tight-bound-iff} and use it to prove Theorem~\ref{thm:tight-bound-iff}. Finally, Section~\ref{section:conclusion} contains some useful discussions and interesting problems.

\section{Preliminaries}\label{section:preliminaries}
{\color{black}Let $[n]$ denote the set $\{1,2,\ldots,n\}$, and let $2^{[n]}$ denote the power set of $[n]$. For any positive integer $ a \leq n$, let $\binom{[n]}{a}$ denote the collection of all $a$-subsets of $[n]$, and let $\binom{[n]}{\leq a}$ denote the collection of all subsets of $[n]$ of size at most $a$. For any set $A\subseteq [n]$, we write $\overline{A}:=[n]\setminus A$, and for a family $\mathcal{F}\subseteq 2^{[n]}$, we write $\overline{{\mathcal{F}}}:=\{\overline{A}: A\in \mathcal{F}\}$.}
      \subsection{Sperner families and shadows}
      Let $\mathcal{F}$ be a family of finite sets. If any two distinct sets in $\mathcal{F}$ are  incomparable, that is,  $A\not\subseteq B$ for any different members $A, B\in\mathcal{F}$, then $\mathcal{F}$ is called an {\it antichain} or  a {\it Sperner family}.
      \begin{theorem}[Sperner's theorem \cite{Sperner}]
      	If $\mathcal{F}\subseteq 2^{[n]}$ is a Sperner family, then \begin{equation*}
      	|\mathcal{F}| \leq \binom{n}{\lfloor n/2\rfloor}.
      	\end{equation*}
      \end{theorem}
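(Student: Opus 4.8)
The plan is to deduce Sperner's theorem from the LYM (Lubell--Yamamoto--Meshalkin) inequality, which itself follows from a clean double-counting argument over maximal chains of the Boolean lattice. First I would set up the counting framework. Call a sequence $\emptyset = C_0 \subset C_1 \subset \cdots \subset C_n = [n]$ with $|C_i| = i$ a \emph{maximal chain}; each such chain corresponds bijectively to an ordering of the $n$ elements of $[n]$ (namely the element adjoined at step $i$), so there are exactly $n!$ maximal chains. A fixed $k$-set $A$ lies on a maximal chain precisely when the chain builds up $A$ in some order and then adjoins the remaining $n-k$ elements in some order; hence $A$ belongs to exactly $|A|!\,(n-|A|)!$ maximal chains.

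The key observation is that a Sperner family $\mathcal{F}$ can contain at most one member of any single maximal chain, since all sets along a chain are pairwise comparable whereas distinct members of $\mathcal{F}$ are pairwise incomparable. Double-counting the incidences between members of $\mathcal{F}$ and the maximal chains passing through them therefore gives
$$\sum_{A \in \mathcal{F}} |A|!\,(n-|A|)! \;\le\; n!,$$
and dividing through by $n!$ yields the LYM inequality $\sum_{A \in \mathcal{F}} \binom{n}{|A|}^{-1} \le 1$. To finish, I would invoke the unimodality of the binomial coefficients, $\binom{n}{k} \le \binom{n}{\lfloor n/2 \rfloor}$ for every $0 \le k \le n$. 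Replacing each $\binom{n}{|A|}$ by this maximal value can only enlarge each summand, so
$$\frac{|\mathcal{F}|}{\binom{n}{\lfloor n/2\rfloor}} \;\le\; \sum_{A \in \mathcal{F}} \frac{1}{\binom{n}{|A|}} \;\le\; 1,$$
which rearranges to the claimed bound $|\mathcal{F}| \le \binom{n}{\lfloor n/2\rfloor}$.

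I do not expect a genuine obstacle here, as this is a classical result; the only points requiring care are the combinatorial count that each $k$-set lies on exactly $k!(n-k)!$ maximal chains and the incomparability argument showing each chain meets $\mathcal{F}$ at most once, and these together form the crux of the proof. As an alternative consistent with the \emph{shadows} theme of this subsection, one could instead run Sperner's original compression argument: using the normalized matching (local LYM) property of the upper shadow, one repeatedly pushes the sets of $\mathcal{F}$ toward the middle layer without decreasing $|\mathcal{F}|$, until the family lies inside the single level $\binom{[n]}{\lfloor n/2 \rfloor}$. The main work in that route is establishing the normalized matching inequality for shadows, which is essentially equivalent to the LYM estimate above.
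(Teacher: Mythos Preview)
Your proof is correct and complete. Note that the paper does not actually supply its own proof of Sperner's theorem; it states the result as a classical fact with citation, and remarks that the LYM inequality ``was used to prove Sperner's theorem.'' Your derivation via the maximal-chain double count followed by the bound $\binom{n}{|A|}\le\binom{n}{\lfloor n/2\rfloor}$ is exactly this standard route, so it is entirely in line with what the paper implicitly has in mind.
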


      The LYM inequality of Lubell, Meshalkin and Yamamoto~\cite{Lubell,Mesalkin,Yamamoto} is an important result in extremal set theory that was used to prove Sperner's theorem.
      \begin{theorem}[LYM inequality]
      	Let $\mathcal{F}\subseteq 2^{[n]}$ be a Sperner family. Then
      	\begin{equation*}
      	\sum_{A\in \mathcal{F}} \frac{1}{\binom{n}{|A|}} \leq 1.
      	\end{equation*}
      \end{theorem}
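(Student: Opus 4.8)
The plan is to prove the LYM inequality by a double-counting argument over maximal chains in the Boolean lattice $2^{[n]}$, which is the classical route that also yields Sperner's theorem. A \emph{maximal chain} is a sequence $\emptyset = C_0 \subsetneq C_1 \subsetneq \cdots \subsetneq C_n = [n]$ with $|C_i|=i$ for each $i$. Such a chain is determined by the order in which singletons are adjoined, so maximal chains are in bijection with permutations of $[n]$; in particular there are exactly $n!$ of them.

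First I would count, for a fixed set $A\subseteq[n]$ with $|A|=k$, the number of maximal chains that contain $A$ (necessarily as $C_k$). A chain passes through $A$ precisely when its first $k$ adjoined elements exhaust $A$ in some order and its remaining $n-k$ elements exhaust $[n]\setminus A$ in some order; hence there are exactly $k!\,(n-k)!$ such chains.

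The key step is to deploy the Sperner hypothesis: no single maximal chain can contain two distinct members of $\mathcal{F}$. Indeed, any two sets lying on a common chain are comparable, so if $A,B\in\mathcal{F}$ both lie on one chain then $A\subseteq B$ or $B\subseteq A$, forcing $A=B$ since $\mathcal{F}$ is an antichain. Therefore the collections of chains through distinct members of $\mathcal{F}$ are pairwise disjoint, and summing their sizes against the total $n!$ gives
\begin{equation*}
\sum_{A\in\mathcal{F}} |A|!\,(n-|A|)! \leq n!.
\end{equation*}
Dividing through by $n!$ and using $\frac{|A|!\,(n-|A|)!}{n!}=\binom{n}{|A|}^{-1}$ yields the claimed bound.

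I expect no serious obstacle, as the argument is elementary; the only point requiring care is the disjointness observation, which is exactly where the antichain hypothesis is consumed. An equivalent formulation I might present instead is probabilistic: a uniformly random maximal chain passes through a fixed $k$-set with probability $\binom{n}{k}^{-1}$, and by the antichain property the events ``the chain passes through $A$'' for $A\in\mathcal{F}$ are mutually exclusive, so their probabilities sum to at most $1$. Either phrasing delivers the inequality in a few lines, and I would choose whichever reads most cleanly alongside the Sperner-theory applications in Section~\ref{section:pf-of-2wFP}.
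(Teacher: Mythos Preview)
Your argument is correct and is the classical Lubell maximal-chain (equivalently, random-permutation) proof of the LYM inequality. Note, however, that the paper does not actually supply its own proof of this theorem: it is stated in Section~\ref{section:preliminaries} as a known result with references to Lubell, Me{\v s}alkin, and Yamamoto, so there is no in-paper proof to compare against. The only hint the paper gives is the remark that ``one approach to proving the LYM inequality is based on local LYM inequalities,'' pointing toward the shadow-compression route via Lemma~\ref{lemma:local-lym}; your chain-counting argument is the other standard route and is entirely appropriate here.
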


      One approach to proving the LYM inequality is based on local LYM inequalities, a version of which dates back to Sperner~\cite{Sperner}.
      If $r\geq 1$ is an integer and $\mathcal{F}\subseteq 2^{[n]}$, we write $\partial^r \mathcal{F}$ for the \emph{$r$-fold shadow} of $\mathcal{F}$, that is, the collection of sets which can be obtained by deleting $r$ elements  from some set in $\mathcal{F}$. The $1$-fold shadow of $\mathcal{F}$ is simply written as $\partial \mathcal{F}$. Similarly, we write $\partial^{-r}\mathcal{F}$ for the collection of sets which can be obtained by adding $r$ elements in $[n]$ to some set in $\mathcal{F}$.
      \begin{lemma}[Local LYM inequality]\label{lemma:local-lym}
      	Let $\mathcal{F}\subseteq \binom{[n]}{a}$ be a family of sets of size $a$. For any $0 \leq r \leq n-a$,
      	\begin{equation*}
      	\frac{|\partial^{-r} \mathcal{F}|}{\binom{n}{a+r}} \geq \frac{|\mathcal{F}|}{\binom{n}{a}}.
      	\end{equation*}
      	For any $0 \leq r \leq a$,
      	\begin{equation*}
      	\frac{|\partial^{r} \mathcal{F}|}{\binom{n}{a-r}} \geq \frac{|\mathcal{F}|}{\binom{n}{a}}.
      	\end{equation*}
      	For both directions, equality holds if and only if $\mathcal{F}=\emptyset$ or $\mathcal{F}=\binom{[n]}{a}$.
      \end{lemma}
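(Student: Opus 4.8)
The plan is to prove both inequalities from a single double-counting of containment pairs, reduce the up-shadow bound to the down-shadow bound by complementation, and then extract the equality characterization from a connectivity argument on the Johnson graph.

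First I would establish the down-shadow bound. Fix $0\le r\le a$ and count incidence pairs $(B,A)$ with $A\in\mathcal{F}$, $B\in\partial^r\mathcal{F}$ and $B\subseteq A$. Counting by $A$: each $A\in\mathcal{F}$ has exactly $\binom{a}{r}$ subsets of size $a-r$, all of which lie in $\partial^r\mathcal{F}$, so the number of pairs is exactly $|\mathcal{F}|\binom{a}{r}$. Counting by $B$: each $B\in\partial^r\mathcal{F}$ has at most $\binom{n-a+r}{r}$ supersets of size $a$, so the number of pairs is at most $|\partial^r\mathcal{F}|\binom{n-a+r}{r}$. Hence $|\mathcal{F}|\binom{a}{r}\le|\partial^r\mathcal{F}|\binom{n-a+r}{r}$, and the routine identity $\binom{a}{r}\binom{n}{a}=\binom{n-a+r}{r}\binom{n}{a-r}$ rearranges this into the claimed $|\partial^r\mathcal{F}|/\binom{n}{a-r}\ge|\mathcal{F}|/\binom{n}{a}$.

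For the up-shadow bound I would pass to complements. Writing $\overline{\mathcal{F}}\subseteq\binom{[n]}{n-a}$, one checks the identity $\partial^{-r}\mathcal{F}=\overline{\partial^{r}\overline{\mathcal{F}}}$, since deleting $r$ elements from $\overline{A}$ is the same as complementing an $(a+r)$-superset of $A$; in particular $|\partial^{-r}\mathcal{F}|=|\partial^{r}\overline{\mathcal{F}}|$. Applying the already-proved down-shadow inequality to $\overline{\mathcal{F}}$ with the same $r$ (now in the valid range $0\le r\le n-a$) and using $\binom{n}{n-a}=\binom{n}{a}$ together with $\binom{n}{(n-a)-r}=\binom{n}{a+r}$ yields exactly the up-shadow inequality, with no further computation.

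The main work is the equality characterization, which I treat for $r\ge1$ (for $r=0$ both shadows equal $\mathcal{F}$ and the inequality is a trivial identity). Equality in the down-shadow double count forces every $B\in\partial^r\mathcal{F}$ to have \emph{all} $\binom{n-a+r}{r}$ of its $a$-supersets in $\mathcal{F}$. Assuming $\mathcal{F}\neq\emptyset$, pick $A\in\mathcal{F}$ and any $A'$ with $|A\cap A'|=a-1$; choosing an $(a-r)$-subset $B$ of $A\cap A'$ (possible since $a-1\ge a-r$ as $r\ge1$), we get $B\in\partial^r\mathcal{F}$ and $B\subseteq A'$, so $A'\in\mathcal{F}$. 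Thus $\mathcal{F}$ is closed under single-element swaps, and since the Johnson graph on $\binom{[n]}{a}$ (adjacency $=$ sharing $a-1$ elements) is connected for $0<a<n$ — any two $a$-sets are joined by successively exchanging one element at a time — it follows that $\mathcal{F}=\binom{[n]}{a}$; the boundary cases $a\in\{0,n\}$ are immediate since $\binom{[n]}{a}$ is then a singleton. For the up-shadow direction, equality transfers through the complementation identity above and forces $\overline{\mathcal{F}}\in\{\emptyset,\binom{[n]}{n-a}\}$, i.e.\ $\mathcal{F}\in\{\emptyset,\binom{[n]}{a}\}$. The only genuinely nonroutine ingredient is the connectivity of the Johnson graph, which I would either cite or dispatch with the one-line swap argument just indicated.
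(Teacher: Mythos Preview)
Your proof is correct and is in fact the standard argument for the Local LYM inequality. Note, however, that the paper does not give its own proof of this lemma: it is stated in the Preliminaries section as a classical background result (attributed to Sperner) and used without proof, so there is no in-paper argument to compare against. Your double-counting plus complementation approach, together with the Johnson-graph connectivity step for the equality case, is exactly the textbook treatment and would serve perfectly well as a self-contained proof; your remark that the equality characterization should be read for $r\ge 1$ (since $r=0$ gives a trivial identity) is also a fair clarification of the statement.
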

      We say a family $\mathcal{F}\subseteq 2^{[n]}$ is {\it $r$-wise intersecting} if $\cap_{A\in \mathcal{F}'}A \neq \emptyset$ for any $\mathcal{F}'\subseteq \mathcal{F}$ with $|\mathcal{F}'|\leq r$. A $2$-wise intersecting family is known as an {\it intersecting family}.
       We need the following result about shadows of intersecting families given by Katona~\cite{Katona64}.
       \begin{lemma}[\cite{Katona64}]\label{lemma-katona}
       	 If $\mathcal{F}\subseteq \binom{[n]}{a}$ is an intersecting family, then $|\partial \mathcal{F}|\ge|\mathcal{F}|$.
       \end{lemma}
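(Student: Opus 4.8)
The plan is to split off an easy range and then attack the essential case $n\ge 2a$ by compression followed by induction. First I would observe that for $n\le 2a-1$ the intersecting hypothesis is not even needed: here $a-1\ge n-a$, so $\binom{n}{a-1}\ge\binom{n}{a}$, and the local LYM inequality (Lemma~\ref{lemma:local-lym}, in its $\partial^{r}$ form with $r=1$) gives $|\partial\mathcal F|\ge|\mathcal F|\binom{n}{a-1}/\binom{n}{a}\ge|\mathcal F|$. So I may assume $n\ge 2a$, where the intersecting property is essential (the full family $\binom{[n]}{a}$, which is not intersecting, has $|\partial\mathcal F|=\binom{n}{a-1}<\binom{n}{a}$).

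For $n\ge 2a$ I would run the standard left-compression: for $i<j$ let $S_{ij}$ replace each $A\in\mathcal F$ with $j\in A$, $i\notin A$ by $(A\setminus\{j\})\cup\{i\}$ whenever the latter is not already present. I would invoke the three familiar properties of this operation—it preserves $|\mathcal F|$, it preserves the intersecting property, and it does not increase the shadow, $|\partial(S_{ij}\mathcal F)|\le|\partial\mathcal F|$. Iterating over all pairs until the family is stable, it suffices to prove $|\partial\mathcal F|\ge|\mathcal F|$ for a \emph{shifted} intersecting family $\mathcal F$.

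Next I would induct on $n$, splitting on the smallest element. Write $\mathcal L=\{A\setminus\{1\}: 1\in A\in\mathcal F\}\subseteq\binom{[n]\setminus\{1\}}{a-1}$ for the link and $\mathcal D=\{A\in\mathcal F: 1\notin A\}\subseteq\binom{[n]\setminus\{1\}}{a}$ for the deletion, so $|\mathcal F|=|\mathcal L|+|\mathcal D|$. The structural consequence of shiftedness I would use is $\partial\mathcal D\subseteq\mathcal L$: if $A\in\mathcal D$ and $x\in A$, then stability under $S_{1x}$ forces $(A\setminus\{x\})\cup\{1\}\in\mathcal F$, i.e. $A\setminus\{x\}\in\mathcal L$. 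Sorting $\partial\mathcal F$ by whether a set contains the element $1$ then yields the exact identity $|\partial\mathcal F|=|\mathcal L|+|\partial\mathcal L|$: the members containing $1$ biject with $\partial\mathcal L$ via deletion of $1$, while those avoiding $1$ are exactly $\partial\mathcal D\cup\mathcal L=\mathcal L$. When $\mathcal D=\emptyset$ (for instance the star $\{A:1\in A\}$) the claim is immediate; otherwise the recursion shrinks the ground set, so the whole problem reduces to the single inequality $|\partial\mathcal L|\ge|\mathcal D|$.

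This last inequality is where I expect the real difficulty to lie. Applying the induction hypothesis to the intersecting family $\mathcal D$ gives $|\partial\mathcal D|\ge|\mathcal D|$, which together with $\partial\mathcal D\subseteq\mathcal L$ yields $|\mathcal L|\ge|\mathcal D|$; the hard part will be upgrading this to $|\partial\mathcal L|\ge|\mathcal D|$. The obstruction is that $\mathcal L$, although shifted, need not be intersecting, so Katona's bound does not apply to it directly, and the local LYM estimate $|\partial\mathcal L|\ge|\mathcal L|(a-1)/(n-a+1)$ becomes lossy precisely when $n\ge 2a$. My proposed route is to exploit shiftedness more finely: either strengthen the inductive statement so that the recursion on $\mathcal D$ simultaneously controls $|\partial\mathcal L|$, or decompose $\mathcal D$ in turn on its smallest element and track how its link embeds into $\partial\mathcal L$. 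Closing this gap is the crux; the rest of the argument is bookkeeping that I have essentially laid out above.
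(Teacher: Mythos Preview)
The paper does not prove this lemma; it is quoted from Katona's 1964 paper as a black box, so there is no ``paper's own proof'' to compare against.

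Your framework via left-compression and the link--deletion decomposition on the element $1$ is the standard route, and everything through the identity $|\partial\mathcal F|-|\mathcal F|=|\partial\mathcal L|-|\mathcal D|$ is correct. But the gap you flag is genuine and your two suggested fixes are too vague to close it: the inductive hypothesis for $1$-intersecting families alone is simply too weak to force $|\partial\mathcal L|\ge|\mathcal D|$, because $\mathcal L$ need not be intersecting and the only control you have on it is $\partial\mathcal D\subseteq\mathcal L$.

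The observation you are missing is that $\mathcal D$ is not merely intersecting but \emph{$2$-intersecting}: for distinct $D,D'\in\mathcal D$ and any $x\in D\cap D'$, shiftedness gives $D'\setminus\{x\}\in\partial\mathcal D\subseteq\mathcal L$, and since $\{1\}\cup(D'\setminus\{x\})\in\mathcal F$ must intersect $D$ while $1\notin D$, one gets $D\cap(D'\setminus\{x\})\ne\emptyset$, whence $|D\cap D'|\ge 2$. This is exactly why the induction must be loaded: one proves Katona's full theorem---for every $t\ge 1$, every (shifted) $t$-intersecting $\mathcal F\subseteq\binom{[n]}{a}$ satisfies $|\partial^{t}\mathcal F|\ge|\mathcal F|$---by induction on $n$ for all $t$ simultaneously. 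In the inductive step $\mathcal L$ is $(t-1)$-intersecting, giving $|\partial^{t-1}\mathcal L|\ge|\mathcal L|$, while the argument above shows $\mathcal D$ is $(t+1)$-intersecting, giving $|\partial^{t+1}\mathcal D|\ge|\mathcal D|$; combined with $\partial^{t+1}\mathcal D\subseteq\partial^{t}\mathcal L$ and the decomposition $|\partial^{t}\mathcal F|=|\partial^{t}\mathcal L|+|\partial^{t-1}\mathcal L|$, the recursion closes. Your $t=1$ case then falls out.
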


  \subsection{Restricted/symmetric differences}
       Let $L$ be a set of positive integers. We denote by $f_L(n)$ the maximum
       size of a family $\mathcal{F} \subseteq 2 ^{[n]}$ such that $|A\Delta B| \in L$ for any distinct $A,B \in \mathcal{F}$.   Equivalently, $f_L(n)$ is the maximum size of a subset of the hypercube $\left\{0, 1\right\}^n$ with pairwise Hamming distance in $L$. A classical result of Delsarte~\cite{Delsarte} shows that $f_L(n)\leq \sum_{i=0}^{|L|} \binom{n}{i}$. When $L=[s]$ for some positive integer $s$, a celebrated theorem of Kleitman~\cite{Kleitman} fully determines $f_{[s]}(n)$.
       \begin{theorem}[Kleitman's theorem]\label{kleit}
       	 \begin{equation*}
       	   f_{[s]}(n) \leq
       	    \left\{
       	   \begin{aligned}
       	   &\sum_{i=0}^{t} \binom{n}{i},
       	   &\text{ for}\ s=2t ;\\
       	   &2 \sum_{i=0}^{t} \binom{n-1}{i} ,  &\text{ for}\ s=2t+1.
       	   \end{aligned}
       	   \right.
       	 \end{equation*}
       \end{theorem}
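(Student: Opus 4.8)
The plan is to prove Kleitman's theorem by induction on the number of coordinates $n$, splitting each family according to a single coordinate and reducing the single-family diameter bound to a stronger two-family ``cross-diameter'' statement. I would treat the two cases $s=2t$ and $s=2t+1$ together, since the natural recursion moves between them.

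First I would record the reduction obtained by peeling off coordinate $n$. Given $\mathcal{F}\subseteq 2^{[n]}$ with $|A\Delta B|\le s$ for all $A,B\in\mathcal{F}$, write $\mathcal{F}_0=\{F\in\mathcal{F}:n\notin F\}$ and $\mathcal{F}_1=\{F\setminus\{n\}:F\in\mathcal{F},\ n\in F\}$, both viewed as families on $[n-1]$. Then $|\mathcal{F}|=|\mathcal{F}_0|+|\mathcal{F}_1|$, each of $\mathcal{F}_0,\mathcal{F}_1$ has diameter at most $s$, and for $A\in\mathcal{F}_0$, $B\in\mathcal{F}_1$ one has $|A\Delta(B\cup\{n\})|=|A\Delta B|+1\le s$, so the cross-diameter satisfies $|A\Delta B|\le s-1$. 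This drop from $s$ to $s-1$ across the two parts is exactly the slack that should drive the induction, and via Pascal's identity $\binom{n}{i}=\binom{n-1}{i}+\binom{n-1}{i-1}$ the target bound splits in the same way as $|\mathcal{F}_0|+|\mathcal{F}_1|$.

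The core of the argument is therefore a strengthened two-family statement: for families $\mathcal{A},\mathcal{B}\subseteq 2^{[m]}$ whose individual diameters and whose mutual cross-diameter are suitably bounded, $|\mathcal{A}|+|\mathcal{B}|$ is at most the corresponding sum of two Hamming-ball sizes. I would prove this by a simultaneous induction on $m$: split both $\mathcal{A}$ and $\mathcal{B}$ on the last coordinate into four pieces, observe how the individual-diameter and cross-diameter budgets decrease for the resulting pairs, and apply the induction hypothesis to the pieces before recombining with Pascal's identity. A helpful preliminary normalization is to apply down-compressions $F\mapsto F\setminus\{i\}$, performed only when the smaller set is not already present; a short case check shows these operations never increase any symmetric difference beyond a value already realized in the family, so they preserve the diameter hypotheses while pushing the families toward downsets and simplifying the bookkeeping.

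The main obstacle, and where I expect the real work to lie, is making the two-family induction self-supporting: the inductive hypothesis must be strong enough, in the precise combination of the two individual diameters and the cross-diameter, that all four sub-pieces produced by the double split fall under it, and one must verify that the recombined bound matches the two-ball target exactly rather than losing a constant. One must also seed the induction correctly, since the ball bound is only the truth once $n$ is large enough relative to $t$ (for very small $n$ the whole cube beats a ball), so the base cases carry genuine content. Finally I would confirm tightness: the even bound $\sum_{i\le t}\binom{n}{i}$ is attained by the Hamming ball $\binom{[n]}{\le t}$, which has diameter $2t$, and the odd bound $2\sum_{i\le t}\binom{n-1}{i}$ by two such balls in $[n-1]$ with a free $n$-th coordinate, whose diameter is $2t+1$.
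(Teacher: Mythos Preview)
The paper does not prove Kleitman's theorem at all: it is stated in Section~\ref{section:preliminaries} as a known preliminary result, cited from Kleitman's original paper~\cite{Kleitman}, with no proof supplied. Indeed, the paper does not even use it in an essential way; the remark at the end of Section~\ref{section:pf-of-2wFP} merely notes that it \emph{could} have been applied in Case~1 of the proof of Theorem~\ref{thm:2-wFP-main} in place of the Nagy--Patk\'{o}s and Xu--Yip results, at the cost of a slightly weaker bound. So there is no in-paper proof to compare your proposal against.

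That said, your outline is the standard route to Kleitman's theorem and is essentially Kleitman's own argument: splitting on a coordinate, observing that the cross-diameter between the two halves drops by one, and closing the recursion via Pascal's identity. You have correctly identified the genuine subtlety, namely that the naive single-family induction does not close and one must carry a strengthened two-family statement (often phrased as: if $\mathcal{A},\mathcal{B}\subseteq 2^{[m]}$ satisfy $|A\Delta B|\le s$ for all $A\in\mathcal{A}$, $B\in\mathcal{B}$, then $|\mathcal{A}|+|\mathcal{B}|$ is bounded by the appropriate ball sum). Your remark that the compressions preserve the diameter hypotheses and that the base cases carry content is accurate. The proposal is a sound plan, though as you yourself note it is a plan rather than a proof: the precise formulation of the two-family hypothesis and the verification that all four sub-pieces after a double split fall back under it is exactly where the work lies, and you have not yet written that out.
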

       Both inequalities in Theorem~\ref{kleit} are tight. When $s = 2t$, the upper bound can be attained by a Hamming ball
       of radius $t$. For $s = 2t + 1$,  one optimal example is the Cartesian product of $\left\{ 0, 1\right\}$ and the $(n-1)$-dimensional Hamming ball of radius $t$.

       For Sperner families, Nagy and Patk\'{o}s~\cite{Nagy-Patkos} introduced the notion of $L$-close Sperner systems. A set system $\mathcal{F} \subseteq 2^{[n]}$ is said to be {\it $L$-close Sperner}, if for any pair of distinct sets $A,B$ in $\mathcal{F}$, the skew distance $\min\{|A \setminus B|,|B \setminus A|\} \in L$.
       Using  linear independence arguments, Nagy and Patk\'{o}s~\cite{Nagy-Patkos} obtained the following result.
       \begin{theorem}[\cite{Nagy-Patkos}]\label{thm:NP}
       	Let $L$ be a set of $s$ positive integers. If $\mathcal{F}\subseteq 2^{[n]}$ is $L$-close Sperner, then
       	\begin{equation*}
       	|\mathcal{F}| \leq \sum_{i=0}^{s} \binom{n}{i}.
       	\end{equation*}
       \end{theorem}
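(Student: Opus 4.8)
The plan is to use the linear algebra (polynomial) method, the essential preparatory move being to orient each pair of sets so that the symmetric skew distance is realized by a fixed \emph{ordered} difference. First I would fix an enumeration $A_1, A_2, \ldots, A_m$ of the members of $\mathcal{F}$ by nondecreasing size, so that $|A_1| \leq |A_2| \leq \cdots \leq |A_m|$. For $i < j$ we have $|A_i| \leq |A_j|$, and since $|A_i \setminus A_j| = |A_i| - |A_i \cap A_j|$ while $|A_j \setminus A_i| = |A_j| - |A_i \cap A_j|$, it follows that $\min\{|A_i \setminus A_j|,\, |A_j \setminus A_i|\} = |A_i \setminus A_j|$. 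By the $L$-close Sperner hypothesis this minimum lies in $L$, so $|A_i \setminus A_j| \in L$ for every pair $i < j$. This one-sided reformulation is what makes the method apply.

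Next I would set up the polynomials. Write $L = \{\ell_1, \ldots, \ell_s\}$, and for each $A \in \mathcal{F}$ let $\mathbf{v}_A \in \{0,1\}^n$ be its characteristic vector. Working over $\mathbb{R}$, define
\[
P_i(\mathbf{x}) = \prod_{\ell \in L}\bigl(|A_i| - \langle \mathbf{v}_{A_i}, \mathbf{x}\rangle - \ell\bigr), \qquad \mathbf{x} \in \mathbb{R}^n,
\]
a polynomial of degree $s$. Since $\langle \mathbf{v}_{A_i}, \mathbf{v}_{A_j}\rangle = |A_i \cap A_j|$, we get $P_i(\mathbf{v}_{A_j}) = \prod_{\ell \in L}(|A_i \setminus A_j| - \ell)$. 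On the diagonal, $P_i(\mathbf{v}_{A_i}) = \prod_{\ell \in L}(-\ell) \neq 0$ because every $\ell \in L$ is a positive integer; for $i < j$ the previous step gives $|A_i \setminus A_j| \in L$, so $P_i(\mathbf{v}_{A_j}) = 0$; and for $i > j$ the value is unconstrained. Hence the $m \times m$ matrix $M = \bigl(P_i(\mathbf{v}_{A_j})\bigr)_{i,j}$ is lower triangular with nonzero diagonal, so it is nonsingular.

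From nonsingularity of $M$ I would deduce linear independence of $P_1, \ldots, P_m$ as real-valued functions on $\{0,1\}^n$: if $\sum_i c_i P_i \equiv 0$, then evaluating at each $\mathbf{v}_{A_j}$ yields $\sum_i c_i M_{ij} = 0$ for all $j$, i.e.\ $M^{\top}\mathbf{c} = 0$, forcing $\mathbf{c} = 0$. Finally, on the cube each coordinate satisfies $x_k^2 = x_k$, so every $P_i$ agrees with a multilinear polynomial of degree at most $s$. The space of such multilinear polynomials has dimension $\sum_{i=0}^{s}\binom{n}{i}$, and it contains the $m$ linearly independent functions $P_i$, whence $|\mathcal{F}| = m \leq \sum_{i=0}^{s}\binom{n}{i}$.

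The main obstacle is precisely the asymmetry of the skew distance: the hypothesis controls only $\min\{|A \setminus B|,\, |B \setminus A|\}$ and not either ordered difference on its own, so one cannot directly manufacture polynomials that vanish off the diagonal for \emph{all} pairs. Ordering by size repairs this by converting the symmetric minimum-condition into the statement $|A_i \setminus A_j| \in L$ for $i < j$, which produces the triangular (rather than merely diagonal) evaluation matrix that still suffices for independence. The only point needing care is tie-breaking when $|A_i| = |A_j|$, but there $|A_i \setminus A_j| = |A_j \setminus A_i|$ already equals the minimum, so any fixed ordering of equal-size sets works.
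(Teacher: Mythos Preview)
Your argument is correct and is exactly the standard linear-independence (polynomial-method) proof. Note that the paper does not supply its own proof of this statement: Theorem~\ref{thm:NP} is quoted from Nagy and Patk\'{o}s~\cite{Nagy-Patkos}, and the paper only remarks that it is obtained ``using linear independence arguments.'' Your proof is precisely such an argument, and indeed matches the approach of~\cite{Nagy-Patkos}: order the sets by size so that the skew distance $\min\{|A_i\setminus A_j|,|A_j\setminus A_i|\}$ coincides with $|A_i\setminus A_j|$ for $i<j$, build degree-$s$ polynomials whose evaluation matrix is triangular with nonzero diagonal, and bound by the dimension of multilinear polynomials of degree at most $s$. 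There is nothing to compare or correct here.
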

       Recently, Xu and Yip~\cite{Xu-Yip} improved on this result for $L=[s]$. They combined `push to the middle' technique and the linear independence argument to prove the following upper bound.
       \begin{theorem}[\cite{Xu-Yip}]\label{thm:Xu-Yip-[s]}
       	Let $L=[s]$ with $(n+1)/3 \leq s \leq n/2$. If $\mathcal{F} \subseteq 2^{[n]}$ is $L$-close Sperner, then
       	$$
       	|\mathcal{F}|\leq \sum_{i=3s-n}^s \binom{n}{i}.
       	$$
       \end{theorem}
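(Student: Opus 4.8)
\medskip
\noindent\textbf{Proof proposal.}
The plan is to combine the linear--algebra (polynomial) method with a ``push to the middle'' compression, as the attribution suggests. Throughout write $\chi_A\in\{0,1\}^n$ for the indicator vector of $A$, and record the elementary reformulation: for distinct $A,B\in\mathcal F$ with $|A|\le|B|$, the hypothesis $\min\{|A\setminus B|,|B\setminus A|\}\in[s]$ is equivalent to $1\le|A\setminus B|\le s$, i.e.\ $|A\cap B|\in\{|A|-s,\dots,|A|-1\}$; in particular $\mathcal F$ is automatically an antichain.

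First I would set up the linear--algebra skeleton. To each $A\in\mathcal F$ with $a:=|A|$ associate
\[
  P_A(x)\ :=\ \prod_{j=\max(0,\,a-s)}^{a-1}\Big(\textstyle\sum_{i\in A}x_i-j\Big),
\]
a polynomial of degree $\min(a,s)\le s$ in the variables $\{x_i:i\in A\}$. By the reformulation, $P_A(\chi_B)=0$ for every $B\in\mathcal F$ with $|B|\ge a$ and $B\ne A$ (one factor vanishes since $|A\cap B|\in\{\max(0,a-s),\dots,a-1\}$), while $P_A(\chi_A)\ne0$. Ordering $\mathcal F$ by non-increasing size, the matrix $(P_A(\chi_B))_{A,B}$ is triangular with non-zero diagonal, so the $P_A$ are linearly independent as functions on $\{0,1\}^n$; since each lies in the $\sum_{i\le s}\binom ni$-dimensional space of multilinear polynomials of degree $\le s$, this already gives $|\mathcal F|\le\sum_{i=0}^s\binom ni$, i.e.\ Theorem~\ref{thm:NP}. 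Observe for later that when $a\le s$ one in fact has $P_A=a!\prod_{i\in A}x_i$, a single monomial of degree $a$; only sets of size $>s$ contribute lower-degree terms.

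To gain the improvement I would push $\mathcal F$ toward the middle: iteratively replace a set lying too far from $n/2$ by a neighbour of size one closer to $n/2$ (small sets pushed up, large sets pushed down), each elementary move arranged so that $\mathcal F$ stays an $[s]$-close Sperner family of the same cardinality. The hypotheses on $s$ are exactly what drive this: $s\le n/2$ makes the target band $\{3s-n,\dots,s\}$ non-empty, while $s\ge(n+1)/3$ makes $3s-n\ge1$, so that a set of size $<3s-n$, together with enough sets clustered near the middle, is forced by the intersection constraint to be pushable upward. The goal of this step is to reduce to the case where every $A\in\mathcal F$ has $3s-n\le|A|\le s$; then by the previous paragraph each $P_A$ is a monomial of degree in $[3s-n,s]$ (or at worst lies in the span of such monomials), a space of dimension $\sum_{i=3s-n}^s\binom ni$, and linear independence yields the stated bound --- equivalently, once all sizes lie in $\{3s-n,\dots,s\}$ the bound is merely the number of such subsets of $[n]$.

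I expect the compression step to be the main obstacle, for three reasons. (i) One must show each elementary push can be made to preserve \emph{both} incomparability and ``skew distance $\le s$'' at once, without ever decreasing $|\mathcal F|$ --- pushing a set up by one element can turn a skew distance $s$ into $s+1$, so the move must be chosen with care. (ii) Eliminating the sets of size $>s$ is the genuinely delicate side: for such a set the naive polynomial $P_A$ carries a non-zero constant term, so it cannot simply be left alone; one must push it below $s$ (or route the large-size part through the complementary family $\overline{\mathcal F}$, which is $[s]$-close Sperner with sizes reflected about $n/2$), and it is here that $s\le n/2$ must be fully exploited. (iii) One must verify that the procedure terminates with all sizes inside $\{3s-n,\dots,s\}$ rather than merely inside the wider symmetric band $\{3s-n,\dots,2n-3s\}$ that ``push to the middle'' naively produces. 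As a sanity check on the bound, the single level $\binom{[n]}{s}$ (and the middle level $\binom{[n]}{n/2}$ when $s=n/2$) is an $[s]$-close Sperner family, consistent with $\sum_{i=3s-n}^s\binom ni\ge\binom ns$.
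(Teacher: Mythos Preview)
This theorem is \emph{not} proved in the present paper; it is quoted from Xu--Yip~\cite{Xu-Yip} and used as a black box in the proof of Theorem~\ref{thm:2-wFP-main}. So there is no ``paper's own proof'' to compare against. The only information the paper gives about the original argument is the one-line description preceding the statement: Xu and Yip ``combined `push to the middle' technique and the linear independence argument.'' Your outline matches that description exactly --- the polynomial/triangular-matrix step and the compression step are precisely the two ingredients named.

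That said, your proposal is a sketch rather than a proof, and you are candid about this. The three obstacles you flag in the compression step are real, and you do not resolve them. In particular, point (ii) is the heart of the matter: a set $A$ with $|A|>s$ produces a polynomial $P_A$ of degree exactly $s$ but with non-vanishing lower-order terms, so simply restricting to the span of monomials of degree in $[3s-n,s]$ does not work without first moving such sets. Your parenthetical suggestion to pass to the complementary family $\overline{\mathcal F}$ is in the right spirit, but on its own it just swaps the roles of ``too large'' and ``too small'' rather than eliminating both. To turn this into a complete argument you would need to actually carry out the compression (or consult \cite{Xu-Yip} for how they do it); as written, the proposal correctly identifies the strategy and its difficulties but does not overcome them.
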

       In particular, when $n$ is even and $s=n/2$, the above upper bound is the same as Sperner's theorem.

  \subsection{Cover-free families and a conjecture of Erd\H{o}s, Frankl and F\"{u}redi}\label{subsection-cover-free}
       Let $t$ be a positive integer. A family $\mathcal{F}\subseteq 2^{[n]}$ is called \emph{$t$-cover-free}
       if $A_0 \not\subseteq A_1 \cup A_2 \cup \dots \cup A_r$ holds for all distinct
       $A_0, A_1, \dots , A_r \in \mathcal{F}$ with $r \leq t$. Thus a Sperner family is a $1$-cover-free family.

       Let $T(n, t)$ denote the maximum cardinality of a $t$-cover-free family $\mathcal{F}$ over an $n$-element underlying set. This notion was introduced by Kautz and Singleton~\cite{Kautz} in 1964 concerning binary codes. They proved that
       \begin{equation*}\label{equa-kautz-singleton}
         \Omega(1/t^2)  \leq \frac{\log_2 T(n,t)}{n} \leq O(1/t).
       \end{equation*}
       This result was rediscovered several times in information theory, in combinatorics by Erd\H{o}s, Frankl, and F\"{u}redi~\cite{erdos-frankl-furedi}, and in group testing by Hwang and S\'{o}s~\cite{Hwang-Sos}. In 1982, D'yachkov and Rykov~\cite{D'yachkov} obtained
       \begin{equation}\label{equa-D'yachkov}
       \limsup_{n\rightarrow \infty} \frac{\log_2 T(n,t)}{n} \leq (2+o(1)) \frac{\log_2 t}{t^2}
       \end{equation}
       with a quite involved proof.
       In 1994, Ruszink\'{o}~\cite{Ruszinko} gave a purely combinatorial proof of $\frac{\log_2 T(n,t)}{n} \leq O(\frac{\log_2 t}{t^2})$. Shortly after that,
       F\"{u}redi~\cite{furedi1996} presented the following slightly weaker result but using  a very elegant proof.

       \begin{theorem}[\cite{furedi1996}]\label{thm-furedi}
       	 Let $t\geq 2$. If $\mathcal{F}\subseteq 2^{[n]}$ is a $t$-cover-free family, then we have
       	 \begin{equation*}
       	  |\mathcal{F}| \leq t+ \binom{n}{\lceil \frac{n-t}{\binom{t+1}{2}} \rceil}.
       	 \end{equation*}
       	 Thus we have $T(n,t) \leq t +2^{\frac{4\log_2 t +O(1)}{t^2} n} $.
       \end{theorem}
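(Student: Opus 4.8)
This is Füredi's bound on $t$-cover-free families, and the plan is to run the ``private set'' argument: attach to almost every member of $\mathcal F$ a small subset that is contained in no other member, observe that these subsets are distinct and pairwise incomparable, and conclude via Sperner's theorem. First I would dispose of trivialities: by the $r=1$ instance of the definition a $t$-cover-free family is an antichain and has no empty member, and if $|\mathcal F|\le t$ there is nothing to prove, so assume $|\mathcal F|\ge t+1$.

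For $A\in\mathcal F$, call $D\subseteq A$ a \emph{private set} of $A$ if $D\not\subseteq B$ for every $B\in\mathcal F\setminus\{A\}$; equivalently, $D$ is a transversal of the hypergraph $H_A:=\{A\setminus B:B\in\mathcal F\setminus\{A\}\}$ on vertex set $A$, whose edges are all nonempty since $\mathcal F$ is an antichain. As $A$ itself is private, a minimum private set $G(A)$ exists; set $p(A)=|G(A)|$. Two easy facts: $A\mapsto G(A)$ is injective, since $G(A)=G(A')$ with $A\ne A'$ would give $G(A)=G(A')\subseteq A'$, contradicting that $G(A)$ lies in no member other than $A$; and $\{G(A):A\in\mathcal F\}$ is an antichain, since $G(A)\subseteq G(A')$ with $A\ne A'$ would similarly force $G(A)\subseteq A'$. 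Hence, with $k:=\lceil (n-t)/\binom{t+1}{2}\rceil$, it suffices to prove $p(A)\le k$ for all but at most $t$ members $A$: then $\{G(A)\}$ is an antichain all but $\le t$ of whose members have size $\le k\le n/2$, and the LYM inequality together with $\binom nj\le\binom nk$ for $j\le k\le n/2$ gives $(|\mathcal F|-t)/\binom nk\le\sum_{A}1/\binom{n}{|G(A)|}\le 1$, i.e. $|\mathcal F|\le t+\binom nk$.

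The crux is thus the bound $p(A)\le k$ for all but at most $t$ members, and here the hypothesis enters: $H_A$ is \emph{$t$-wise intersecting}, because any $t$ of its edges $A\setminus B_1,\dots,A\setminus B_t$ must meet, else $A\subseteq B_1\cup\dots\cup B_t$. As a warm-up, partition $[n]$ into $t$ blocks of size $\le\lceil n/t\rceil$: at least one block $P$ is a transversal of $H_A$, since otherwise we could pick an edge $e_i\subseteq[n]\setminus P_i$ for each $i$ and get $\bigcap_{i\le t}e_i\subseteq\bigcap_{i\le t}([n]\setminus P_i)=\emptyset$, contradicting $t$-wise intersection; then $P\cap A$ is private, so $p(A)\le\lceil n/t\rceil$. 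This is far weaker than $k$. Upgrading the denominator from $t$ to $\binom{t+1}{2}$ cannot be done with a uniform partition, because $H_A$ need only be $t$-wise, not $\binom{t+1}{2}$-wise, intersecting; instead one must exploit the cover-free condition among \emph{all} members, not just ``$A$ versus $t$ others'': roughly, a member $A$ with no private set of size $\le k$ would have every $k$-subset of $A$ sitting inside another member, forcing many near-copies of $A$ on $A$ which can then be played off against one another and against the cover-free condition, while the members of exceptionally large size (which no partition argument controls) must be argued to be few, together accounting for the $t$ exceptions. Pinning down exactly the factor $\binom{t+1}{2}$ and exactly $t$ exceptional members is the part I expect to be delicate.

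Finally, for the closed-form estimate, since $k\le\lceil n/\binom{t+1}{2}\rceil$ we have $\binom nk\le\bigl(e\binom{t+1}{2}\bigr)^{\lceil n/\binom{t+1}{2}\rceil}=2^{\lceil n/\binom{t+1}{2}\rceil\log_2(e\binom{t+1}{2})}$, and as $\binom{t+1}{2}=\tfrac{t(t+1)}{2}$ this is $2^{(4\log_2 t+O(1))n/t^2}$, so $T(n,t)\le t+2^{(4\log_2 t+O(1))n/t^2}$, as claimed.
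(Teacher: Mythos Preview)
The paper does not prove this theorem; it is quoted from F\"uredi's 1996 note as a known ingredient and then invoked in the proof of Theorem~\ref{thm:general-upper-bound}. So there is no in-paper proof to compare your attempt against.

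That said, your framework is precisely F\"uredi's: minimal private subsets $G(A)$, the observations that $A\mapsto G(A)$ is injective and that $\{G(A):A\in\mathcal F\}$ is an antichain, and then LYM to finish once the sizes $p(A)=|G(A)|$ are controlled. But there is a genuine gap. You never establish the central estimate $p(A)\le k=\lceil (n-t)/\binom{t+1}{2}\rceil$ for all but at most $t$ members of $\mathcal F$. Your warm-up partition argument yields only $p(A)\le\lceil n/t\rceil$, and you yourself flag the upgrade to denominator $\binom{t+1}{2}$ with at most $t$ exceptions as ``the part I expect to be delicate''---and then leave it undone. The heuristic you offer (every $k$-subset of a bad $A$ sits in another member, forcing many near-copies of $A$ that can be ``played off'' against the cover-free condition) gestures in the right direction but is not a proof: nothing you wrote pins down why the number of exceptional members is at most $t$ rather than, say, $O(t^2)$, nor why the denominator is exactly $\binom{t+1}{2}$ rather than some other quadratic in $t$. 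This missing step is the entire content of F\"uredi's half-page argument; without it your write-up is a correct outline of the strategy, not a proof.

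Your closing derivation of the asymptotic form $T(n,t)\le t+2^{(4\log_2 t+O(1))n/t^2}$ from the binomial bound via $\binom nk\le (en/k)^k$ is fine.
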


        {\color{black} All above results provide general upper bounds for $T(n,t)$. For lower bounds, $T(n,t)\geq n$ since the family of singletons
         $\{\{1\},\{2\},\dots,\{n\}\}$ is $t$-cover-free.  This motivates another important problem about $T(n,t)$, which is to determine the smallest $n$ such that $T(n,t)> n$ for any given $t$.  Denote this minimum $n$ by $N^*(t)$, that is, the minimum $n$ such that there exists a $t$-cover-free family $\mathcal{F} \subseteq 2^{[n]}$ with $|\mathcal{F}| > n$. The determination of $N^*(t)$ would help to determine $T(n,t)$ for certain parameters and is closely related to group testing.}

       In 1985, Erd\H{o}s, Frankl and F\"{u}redi~\cite{erdos-frankl-furedi} proposed the following conjecture.
       \begin{conjecture}[\cite{erdos-frankl-furedi}]\label{conj:erdos-frankl-furedi}
       	  $\lim_{t \rightarrow \infty} N^*(t)/t^2 =1$, or in an even stronger form $N^*(t)\geq (t+1)^2$.
       \end{conjecture}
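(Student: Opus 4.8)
The plan is to split Conjecture~\ref{conj:erdos-frankl-furedi} into its two natural halves. Since the singletons always give $T(n,t)\ge n$, the definition of $N^*(t)$ is equivalent to the statement that $T(n,t)=n$ for all $n<N^*(t)$. Thus the limit $\lim_{t\to\infty}N^*(t)/t^2=1$ decomposes into an \emph{upper} estimate $\limsup_t N^*(t)/t^2\le 1$, asserting the existence of a $t$-cover-free family exceeding its ground set on some $n=(1+o(1))t^2$, and a \emph{lower} estimate $\liminf_t N^*(t)/t^2\ge 1$, asserting that $T(n,t)=n$ for every $n\le(1-o(1))t^2$; the stronger form $N^*(t)\ge(t+1)^2$ is the exact version of the lower estimate. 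I would treat these two halves by entirely different tools.

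For the upper estimate I would use finite geometry. When $q=t+1$ is a prime power, take the lines of the affine plane $AG(2,q)$: this is a family of $q(q+1)=(t+1)(t+2)$ blocks, each of size $t+1$, on $q^2=(t+1)^2$ points, any two of which meet in at most one point. Covering a single block therefore requires at least $t+1$ others, so the family is $t$-cover-free, and it has $(t+1)(t+2)>(t+1)^2$ members on $(t+1)^2$ points. This yields $N^*(t)\le(t+1)^2$ whenever $t+1$ is a prime power, and combined with the density of primes and the existence of near-optimal partial Steiner systems of block size $t+1$ (whose block count approaches $\binom{n}{2}/\binom{t+1}{2}$) one obtains $\limsup_t N^*(t)/t^2\le 1$ for all $t$. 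This half I expect to go through without essential difficulty.

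The lower estimate is the crux. I would argue by contradiction from a minimal violation: a $t$-cover-free family $\mathcal{F}\subseteq 2^{[n]}$ with $|\mathcal{F}|=n+1$ and $n$ as small as possible; by the cover-free property $\mathcal{F}$ is automatically Sperner. The goal is to force $n\ge(t+1)^2$, or at least $n\ge(1-o(1))t^2$. The main engine would be a refined version of the Füredi-type double counting behind Theorem~\ref{thm-furedi}, together with the shadow and LYM inputs of Lemma~\ref{lemma:local-lym} and the intersecting-shadow bound of Lemma~\ref{lemma-katona} that already drive the tight range in Theorem~\ref{thm:tight-bound}. Concretely, for each $A\in\mathcal{F}$ the $t$-cover-free condition supplies, for every choice of $t$ other members, a private element of $A$; charging these private elements to ground-set points and bounding by convexity how many sets a single point can privately serve should produce an inequality of the form $n+1=|\mathcal{F}|\le n$ once $n<(t+1)^2$. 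To reach the exact constant I would impose a \emph{stability} analysis showing that any family approaching the threshold must have almost all blocks of size exactly $t+1$ with pairwise intersections at most one, i.e.\ must resemble a near-affine-plane, and then rule out the extra block directly.

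The step I expect to be the genuine obstacle — and the reason the conjecture remains open — is sharpening this counting inequality to the optimal constant. The convexity and shadow estimates are tight only for a single uniform block size, whereas a hypothetical extremal family may mix block sizes, and the resulting slack is exactly the gap between the constant $\frac{15+\sqrt{33}}{24}\approx 0.864$ reached by Theorem~\ref{thm:tight-bound} and the conjectured value $1$. Removing this slack seems to require either a stability theorem for near-optimal packings strong enough to control all block sizes simultaneously, or a fundamentally different linear-algebraic or LP-duality argument that is tight precisely at the affine plane. Absent such an input the present methods cannot cross from $0.864$ to $1$, and the exact bound $N^*(t)\ge(t+1)^2$ lies beyond them; I would regard establishing this tightness as the core open difficulty rather than a routine refinement.
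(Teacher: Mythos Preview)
The statement is a \emph{conjecture}, and the paper does not prove it; there is no ``paper's own proof'' to compare against. Your proposal is therefore not a proof but an outline of partial progress together with an honest identification of the obstruction, which is the appropriate response here.

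On the upper estimate your treatment matches the paper's: immediately after stating the conjecture the paper observes that an affine plane of prime-power order $q\ge t+1$ yields a $t$-cover-free family of size $q^2+q$ on $q^2$ points, whence $N^*(t)\le(1+o(1))t^2$ via the density of primes. Your additional remark about near-optimal partial Steiner systems is not needed for the asymptotic upper bound and is not in the paper, but it does no harm.

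On the lower estimate you correctly identify the situation: the best known bound is Theorem~\ref{thm-shangguan-cover-free}, giving $N^*(t)\ge\frac{15+\sqrt{33}}{24}t^2$, and closing the gap to $1$ (let alone the exact $(t+1)^2$) is the open problem. Your sketched ``private element'' counting is essentially the mechanism behind the Ruszink\'o/F\"uredi/Shangguan--Ge arguments, and your diagnosis that the slack arises from mixed block sizes and that a stability result near the affine-plane configuration would be needed is reasonable, but it is speculation rather than a proof step. In short: there is no gap to name because you have not claimed a proof; your proposal accurately reflects that the lower half of the conjecture remains open and that current double-counting methods stall at the constant $\frac{15+\sqrt{33}}{24}$.
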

       Note that when $q$ is a prime power, an affine plane of order $q$ induces a $(q-1)$-cover-free family $\mathcal{F}\subseteq 2^{[q^2]}$ of size $q^2+q$. Let $q$ be the smallest prime power no less than $t+1$. Then the existence of an affine plane of order $q$ gives rise to a $t$-cover-free family $\mathcal{F}\subseteq 2^{[q^2]}$ of size $q^2+q$. Hence $N^*(t) \leq (1+o(1))t^2$. The currently best known general lower bound is due to Shangguan and Ge~\cite{Shangguan2016-disjunct-matrices}, which is restated below.
       \begin{theorem}[\cite{Shangguan2016-disjunct-matrices}]\label{thm-shangguan-cover-free}
       	 {\color{black} For any $t\geq 1$, } $N^*(t) \geq \frac{15+\sqrt{33}}{24}t^2$.
       \end{theorem}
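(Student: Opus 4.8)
The plan is to establish the contrapositive: if $\mathcal{F}\subseteq 2^{[n]}$ is $t$-cover-free with $|\mathcal{F}|>n$, then $n\geq\frac{15+\sqrt{33}}{24}t^2$. Write $m=|\mathcal{F}|$ and, for $x\in[n]$, let $d_x=|\{A\in\mathcal{F}:x\in A\}|$ be its degree. I would first \emph{normalize} $\mathcal{F}$. Call $x$ private if $d_x=1$. If some $A\in\mathcal{F}$ contains a private element $x$, then $A$ is not contained in the union of any other sets, so deleting both $A$ and $x$ yields a $t$-cover-free family on $n-1$ points with $m-1$ sets, and the gap $m-n\geq 1$ is preserved. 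Iterating until no private element remains (the process terminates since $n$ strictly decreases) produces a $t$-cover-free family $\mathcal{F}'$ on $n'\leq n$ points with $|\mathcal{F}'|>n'$ in which every point has degree at least $2$. Since the bound for $\mathcal{F}'$ implies it for $\mathcal{F}$, I may assume henceforth that every point of $\mathcal{F}$ has degree $\geq 2$. A first dividend: for such a family every $A\in\mathcal{F}$ has $|A|\geq t+1$, because $A$ can be covered by at most $|A|$ other sets (one per element, each element lying in some other set), yet the $t$-cover-free property forbids covering $A$ with $t$ sets; in particular $\sum_{A}|A|=\sum_x d_x\geq(t+1)m$.

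The structural core is to convert ``no $t$ sets cover $A$'' into a numerical inequality. Fixing $A\in\mathcal{F}$, the condition is equivalent to saying that the complement family $\{A\setminus B:B\in\mathcal{F}\setminus\{A\}\}\subseteq 2^{A}$ is $t$-wise intersecting, i.e. every $t$ of these complements share a common point of $A$. Counting the $t$-subsets of $\mathcal{F}\setminus\{A\}$ by a witness point $x\in A$ that all their members avoid (a point $x$ is avoided by exactly the $m-d_x$ sets $B\neq A$ with $x\notin B$) gives, for each $A$,
\begin{equation*}
\binom{m-1}{t}\ \leq\ \sum_{x\in A}\binom{m-d_x}{t},
\end{equation*}
and summing over $A\in\mathcal{F}$ (using $\sum_{A}\sum_{x\in A}f(x)=\sum_x d_x f(x)$) yields the global inequality
\begin{equation*}
m\binom{m-1}{t}\ \leq\ \sum_{x\in[n]}d_x\binom{m-d_x}{t}.
\end{equation*}

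The final step is an optimization over the degree sequence. Regarding $(d_x)_{x\in[n]}$ as integers with $d_x\geq 2$, $\sum_x d_x\geq(t+1)m$ and $m>n$, I would bound the right-hand side from above by convexity (the extremal degree distribution concentrates on a few values) and compare with the left-hand side. After substituting the scaling $d_x\approx n/t$, $m\approx n$ suggested by the affine-plane extremal configuration, the feasibility of the resulting inequality becomes a quadratic constraint in the single variable $x=n/t^2$; I expect the binding case to be exactly the larger root of $12x^2-15x+4=0$, namely $x=\frac{15+\sqrt{33}}{24}$, which is the claimed constant. Katona's shadow bound (Lemma~\ref{lemma-katona}) and the local LYM inequality (Lemma~\ref{lemma:local-lym}) enter here to control the sizes of the trace families $\{A\cap B\}$ and hence to pin down the extremal degree/size distribution in the optimization.

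The main obstacle is that the first-moment count above, coming from a union bound over witness points, is too lossy to reach $t^2$: on the affine-plane family it is far from tight, and by itself it only yields $n\gtrsim t$. The crux of the proof is therefore to replace it with a second-order count that is tight for the affine plane---for instance an inclusion--exclusion over \emph{pairs} of witness points, or a Bollob\'as-type set-pair inequality applied to the pairs $(A,\,A\setminus B)$---and then to carry out the convex optimization precisely enough to extract the exact constant $\frac{15+\sqrt{33}}{24}$ rather than a weaker multiple of $t^2$. Handling the boundary between ``few large-degree points'' and ``many small-degree points'' is where the quadratic, and hence the surd, is forced.
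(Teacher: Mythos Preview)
This theorem is quoted from \cite{Shangguan2016-disjunct-matrices} and is not re-proved in the paper, but the paper does reproduce the core of the Shangguan--Ge argument in the proof of Lemma~\ref{lemma:key}, so one can compare your proposal against that.

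Your normalization step (iteratively deleting a set containing a private element together with that element) is correct and matches exactly the reduction to a disjunct matrix with no isolated columns; your consequence $|A|\geq t+1$ for every remaining $A$ is also the right first observation. However, the counting you actually carry out has a genuine gap, which you yourself flag: the first-moment inequality $m\binom{m-1}{t}\leq\sum_x d_x\binom{m-d_x}{t}$ is far too weak to reach $n\gtrsim t^2$, and you do not supply a working replacement. The plan to ``replace it with a second-order count'' over pairs of witness points, or to invoke a Bollob\'as-type set-pair inequality, is not fleshed out, and neither of these directions by itself is known to produce the constant. Katona's shadow lemma and the local LYM inequality play no role in the Shangguan--Ge proof, and it is not clear how you would use them here.

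The missing idea is concrete and quite different from a global degree-sequence optimization. Call a pair $\{x,y\}\subseteq[n]$ \emph{private} to $A$ if $A$ is the unique member of $\mathcal{F}$ containing both $x$ and $y$; write $P(A)$ for the set of such pairs and $N(A)=\binom{A}{2}\setminus P(A)$. Private pairs of distinct sets are disjoint, so $\sum_A|P(A)|\leq\binom{n}{2}$. The key combinatorial input (Lemma~\ref{lemma:shangguan-matching} in the paper, proved by Shangguan and Ge via a matching argument inside $A$) is that when no element is private and $|A|=t+s$ with $1\leq s\leq t-1$, one has $|N(A)|\leq\max\bigl\{\binom{2s-1}{2},\,\binom{t+s}{2}-\binom{t+1}{2}\bigr\}$. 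This forces $|P(A)|\geq\frac{15+\sqrt{33}}{48}t^2$ uniformly for $1\leq s\leq\frac{3+\sqrt{33}}{12}t$; the quadratic $12x^2-15x+4=0$ you spotted arises exactly from minimizing $|P(A)|/t^2$ over $s/t$ in this range. Sets with $|A|>\frac{15+\sqrt{33}}{12}t$ are handled separately by deleting such an $A$ together with its elements and inducting on $t$ (Lemma~\ref{lemma:disjunct-delete}). Summing $|P(A)|$ then yields $\binom{n}{2}\geq m\cdot\frac{15+\sqrt{33}}{48}t^2$ and hence $n\geq\frac{15+\sqrt{33}}{24}t^2$ whenever $m>n$. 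In short: the argument is local to each set, and the crux is the upper bound on non-private pairs inside a single set, not a convexity-based optimization of the global degree profile.
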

   Theorem~\ref{thm-shangguan-cover-free}  directly implies that if $n< \frac{15+\sqrt{33}}{24}t^2$ and $\mathcal{F}\subseteq 2^{[n]}$ is a $t$-cover-free family, then $|\mathcal{F}| \leq n$. This gives
   a tight upper bound since the family of all singleton sets is a $t$-cover-free family.
   We will prove that this family is the unique optimal family for $n< \frac{15+\sqrt{33}}{24}t^2$ in Section~\ref{section:pf-of-tight-bound}.

\section{Proof of Theorem~\ref{thm:2-wFP-main}}\label{section:pf-of-2wFP}
In this section we prove Theorem~\ref{thm:2-wFP-main}. Our proof is based on the tools in the last section and the relationship between Sperner families and wide-sense frameproof codes.

 Given a code $\mathcal{C}= \{\mathbf{c}^1,\mathbf{c}^2,\dots,\mathbf{c}^m\}\subseteq Q^n$ of size $m$. For any $1\le i\ne j\le m$, define $I(i,j)$ to be the {\it coincidence set} of   $\mathbf{c}^i$ and $\mathbf{c}^j$, that is, $$I(i,j)\triangleq\{k:c^{i}_k=c^{j}_k, k\in [n]\}.$$  The following result is simple but useful.

\begin{lemma}[\cite{Zhou-Zhou}, Lemma 2.7]{\label{lemma:symmetric-diff}}
	Let $\mathcal{C} = \{\mathbf{c}^1,\mathbf{c}^2,\dots,\mathbf{c}^m\}\subseteq Q^n$ be any code.  For any distinct $i,j,k\in [m]$, we have
	\begin{equation*}
	I\left( {i,j} \right) \cap I\left( {i,k} \right) \subseteq  I\left( {j,k} \right) \subseteq \overline{I(i,j) \Delta I(i,k)}.
	\end{equation*}
\end{lemma}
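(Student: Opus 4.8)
The plan is to verify both inclusions by chasing membership coordinate by coordinate, exploiting the fact that each coincidence set, together with the operations $\cap$, $\Delta$ and complementation, is defined pointwise on $[n]$; no extremal set theory is needed at this stage.

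For the first inclusion $I(i,j)\cap I(i,k)\subseteq I(j,k)$, I would fix a coordinate $\ell\in I(i,j)\cap I(i,k)$, so that $c^i_\ell=c^j_\ell$ and $c^i_\ell=c^k_\ell$; transitivity of equality then gives $c^j_\ell=c^k_\ell$, i.e. $\ell\in I(j,k)$. For the second inclusion I would instead establish the equivalent containment $I(i,j)\,\Delta\,I(i,k)\subseteq\overline{I(j,k)}$ and pass to complements at the end. Fixing $\ell\in I(i,j)\,\Delta\,I(i,k)$, I may assume, by the symmetry of the claimed inclusions under swapping $j$ and $k$, that $\ell\in I(i,j)\setminus I(i,k)$; then $c^i_\ell=c^j_\ell$ while $c^i_\ell\neq c^k_\ell$, so $c^j_\ell\neq c^k_\ell$, i.e. $\ell\notin I(j,k)$, i.e. $\ell\in\overline{I(j,k)}$. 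This proves $I(i,j)\,\Delta\,I(i,k)\subseteq\overline{I(j,k)}$, and taking complements yields $I(j,k)\subseteq\overline{I(i,j)\,\Delta\,I(i,k)}$, as required.

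There is essentially no obstacle here: the only point that calls for a little care is the reduction to a single symmetric-difference case, which is legitimate precisely because interchanging the roles of $j$ and $k$ leaves both stated inclusions unchanged. An alternative and equally short route is to work with the $\{0,1\}$-indicator vectors of the coincidence sets, noting that the indicator of $I(i,j)\,\Delta\,I(i,k)$ is the coordinatewise sum modulo $2$ of the indicators of $I(i,j)$ and $I(i,k)$; but the direct pointwise case check above is the cleanest.
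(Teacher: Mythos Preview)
Your proof is correct. The paper does not actually give its own proof of this lemma---it is quoted from \cite{Zhou-Zhou} as a known fact---so there is nothing to compare against; your elementwise verification via transitivity of equality (for the first inclusion) and the contrapositive/complement trick (for the second) is exactly the standard argument one would expect.
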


 For any $i\in [m]$, define $$\mathcal{X}_i\triangleq\{I(i,j): j\in [m]\setminus\{i\}\}\subseteq 2^{[n]}$$ to be the {\it coincidence family} generated by the codeword $\mathbf{c}^i\in\mathcal{C}$.
Stinson and Wei~\cite{Stinson-Wei1998} were the first to establish the relationship between Sperner families and wide-sense frameproof codes, and Panoui~\cite{panoui} further proved the following result. Here, a family $\mathcal{F}\subseteq 2^{[n]}$ is called
{\it non $2$-covering} if for every pair of sets $A,B \in\mathcal{F}$ we have $A\cup B \neq [n]$.

\begin{theorem}[\cite{panoui}, Lemma 6.3.2, Corollary 6.3.3]\label{thm-panoui} Let $\mathcal{C} = \{\mathbf{c}^1,\mathbf{c}^2,\dots,\mathbf{c}^m\}\subseteq Q^n$ be a code of size $m$. Then, $\mathcal{C}$ is a wide-sense $2$-frameproof code if and only if
	for each $i\in [m]$, the coincidence family $\mathcal{X}_i$ is a non $2$-covering Sperner family of size $m-1$.
\end{theorem}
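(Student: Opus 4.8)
The plan is to reduce the wide-sense $2$-frameproof property to a purely coordinate-wise condition on coincidence sets, and then translate it into the language of the coincidence families $\mathcal{X}_i$. The key observation is that for a pair $X=\{\mathbf{c}^i,\mathbf{c}^j\}$ we have $U(X)=I(i,j)$, so $\wdesc(X)=\{\mathbf{y}\in Q^n : y_k=c^i_k \text{ for all }k\in I(i,j)\}$; hence a third codeword $\mathbf{c}^\ell$ belongs to $\wdesc(X)$ precisely when $c^\ell_k=c^i_k$ for every $k\in I(i,j)$, i.e.\ when $I(i,j)\subseteq I(i,\ell)$. Since the $|X|=1$ case of Definition~\ref{orgde} is automatic (as $U(\{\mathbf{c}^i\})=[n]$), this yields the clean reformulation
\[
\mathcal{C}\text{ is wide-sense }2\text{-frameproof}\iff I(i,j)\not\subseteq I(i,\ell)\ \text{ for all pairwise distinct }i,j,\ell\in[m].\tag{$\star$}
\]

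From $(\star)$ the antichain and size claims are immediate. Fixing $i$ and letting $j,\ell$ range over $[m]\setminus\{i\}$, condition $(\star)$ says exactly that no member of $\mathcal{X}_i$ contains another, i.e.\ $\mathcal{X}_i$ is a Sperner family; and since the inclusion $I(i,j)\subseteq I(i,\ell)$ is forbidden in particular when the two sets coincide, the sets $I(i,j)$ $(j\ne i)$ are pairwise distinct and $|\mathcal{X}_i|=m-1$. Conversely, if every $\mathcal{X}_i$ is a Sperner family of size $m-1$ then $(\star)$ holds and $\mathcal{C}$ is wide-sense $2$-frameproof; I note that the non-$2$-covering hypothesis is not actually needed for this implication.

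What remains, for the forward direction, is to show each $\mathcal{X}_i$ is non-$2$-covering, and this is the only place where one must look beyond a single coincidence family — I expect it to be the main obstacle, since non-$2$-covering is genuinely not a consequence of Sperner-ness of $\mathcal{X}_i$ alone. First I would take members $A=I(i,j_1)$, $B=I(i,j_2)$ of $\mathcal{X}_i$ with $A\cup B=[n]$; the case $j_1=j_2$ gives $A=[n]$, hence $\mathbf{c}^i=\mathbf{c}^{j_1}$, a contradiction. For $j_1\ne j_2$ I would invoke Lemma~\ref{lemma:symmetric-diff} on the triple $(i,j_1,j_2)$ to get
\[
I(j_1,j_2)\subseteq\overline{I(i,j_1)\,\Delta\,I(i,j_2)}=\bigl(I(i,j_1)\cap I(i,j_2)\bigr)\cup\overline{I(i,j_1)\cup I(i,j_2)},
\]
and since $\overline{I(i,j_1)\cup I(i,j_2)}=\emptyset$ by assumption this forces $I(j_1,j_2)\subseteq I(i,j_1)\cap I(i,j_2)\subseteq I(i,j_2)=I(j_2,i)$, i.e.\ $I(j_2,j_1)\subseteq I(j_2,i)$ with $j_1\ne i$ — contradicting $(\star)$ applied to the triple $(j_2,j_1,i)$. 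Hence $A\cup B\ne[n]$, and the proof is complete. Apart from this transfer-to-the-third-index argument, everything is a routine unwinding of the definitions of $U(X)$ and $\wdesc(X)$.
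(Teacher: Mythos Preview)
Your proof is correct. The paper does not prove Theorem~\ref{thm-panoui} directly (it is cited from Panoui's thesis), but it does prove the general $t$-version in Theorem~\ref{thm:connection-cover-free-wFP} together with Lemma~\ref{inter}, and specializing that argument to $t=2$ yields essentially your proof: the equivalence $(\star)$ is exactly the $s\le 2$ case of Definition~\ref{equiv-defn}, and your ``transfer-to-the-third-index'' step for non-$2$-covering is the content of Lemma~\ref{inter}. The only cosmetic difference is that for the non-$2$-covering part you route through Lemma~\ref{lemma:symmetric-diff} to obtain $I(j_1,j_2)\subseteq I(i,j_1)\cap I(i,j_2)$, whereas the paper's Lemma~\ref{inter} reaches the same contradiction by reading off coordinates directly; both amount to applying $(\star)$ at the index $j_2$ rather than $i$. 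Your remark that the non-$2$-covering hypothesis is unnecessary for the converse is also made explicitly in the paper right after Theorem~\ref{thm:connection-cover-free-wFP}.
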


To prove Theorem~\ref{thm:2-wFP-main}, we first show an upper bound of the size of a non $2$-covering Sperner family which contains a relatively small set.
\begin{proposition}\label{prop:sperner}
	Let $\epsilon\in (0,1/2)$ be a constant. Suppose that $\mathcal{F} \subseteq 2^{[n]}$ is a non $2$-covering Sperner family such that there exists some $A_0\in \mathcal{F}$ with $|A_0|\leq \epsilon n$. Then we have
	\begin{equation*}
	|\mathcal{F}| \leq \left(1-\frac{1}{\binom{n}{\lfloor \epsilon n\rfloor}}\right)\binom{n}{\lfloor\frac{n-1}{2}\rfloor}+1.
	\end{equation*}
\end{proposition}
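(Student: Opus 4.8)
Write $a_0 := |A_0|$. Since $\epsilon < 1/2$ we have $a_0 \le \lfloor \epsilon n \rfloor < n/2$, hence $\binom{n}{a_0} \le \binom{n}{\lfloor \epsilon n \rfloor}$, so it suffices to prove the sharper inequality $|\mathcal F| \le (1 - 1/\binom{n}{a_0})\binom{n}{\lfloor (n-1)/2 \rfloor} + 1$. The cases $a_0 = 0$ (which forces $\mathcal F = \{\emptyset\}$) and, when $n$ is even, $a_0 = n/2 - 1$ (where the right-hand side collapses to $\binom{n}{n/2-1}$, so nothing is claimed about $A_0$) are degenerate and handled separately; so assume otherwise. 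Put $\mathcal F_k := \mathcal F \cap \binom{[n]}{k}$, and recall that ``non $2$-covering'' says exactly that $\overline A \cap \overline B \ne \emptyset$ for all $A, B \in \mathcal F$, i.e., that $\overline{\mathcal F}$ is an intersecting antichain (complementation preserves the antichain property).

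When $n = 2m+1$ is odd the non $2$-covering hypothesis is not even used. By the LYM inequality, isolating the term of $A_0$,
\[
\sum_{A \in \mathcal F \setminus \{A_0\}} \frac{1}{\binom{n}{|A|}} \le 1 - \frac{1}{\binom{n}{a_0}},
\]
and since $\binom{n}{|A|} \le \binom{n}{m} = \binom{n}{\lfloor (n-1)/2 \rfloor}$ for every $A$, multiplying through by $\binom{n}{m}$ gives $|\mathcal F| - 1 \le (1 - 1/\binom{n}{a_0})\binom{n}{m}$, which is what we want.

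The even case $n = 2m$ is the heart of the matter, since here $\binom{n}{\lfloor n/2 \rfloor} = \binom{n}{m}$ strictly exceeds the target coefficient $\binom{n}{m-1}$, and the non $2$-covering condition must be exploited to bridge the gap. The key observation is that on the central level it forces $\mathcal F_m$ to be an \emph{intersecting} family of $m$-subsets of $[2m]$: for such $A, B$ one has $A \cup B = [n]$ if and only if $A \cap B = \emptyset$, and the same identity shows $\overline{\mathcal F_m}$ is intersecting too. Applying Lemma~\ref{lemma-katona} to $\mathcal F_m$ and to $\overline{\mathcal F_m}$ yields $|\partial \mathcal F_m| \ge |\mathcal F_m|$ and, after complementing, $|\partial^{-1}\mathcal F_m| \ge |\mathcal F_m|$ (the upper shadow at level $m+1$); since $\mathcal F$ is an antichain these shadows are disjoint from $\mathcal F_{m-1}$ and $\mathcal F_{m+1}$ respectively, so
\[
|\mathcal F_{m-1}| + |\mathcal F_m| \le \binom{n}{m-1} \qquad \text{and} \qquad |\mathcal F_m| + |\mathcal F_{m+1}| \le \binom{n}{m-1}.
\]
These are the even-$n$ substitute for ``the central level cannot be used in full''; together with control of the levels far from the middle they already recover the bound $|\mathcal F| \le \binom{n}{m-1}$, and they are what degrades $\binom{n}{m}$ to $\binom{n}{m-1}$ in the final count.

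To additionally capture the deficiency $1/\binom{n}{a_0}$ coming from $A_0$, the plan is a push-to-the-middle compression, in the spirit of Theorem~\ref{thm:Xu-Yip-[s]} and classical Sperner theory: the sets of $\mathcal F$ of size $> m+1$ form an automatically intersecting family, so by Lemma~\ref{lemma-katona} they may be pushed down, level by level from the top, onto their lower shadows, without decreasing $|\mathcal F|$ and without spoiling the antichain or the non $2$-covering property (shrinking a set can only shrink its unions); dually, on the complementary side, the sets of size $< m-1$ other than $A_0$ can be moved up. After this reduction $\mathcal F$ is supported on the three central levels $m-1, m, m+1$ together with the single low-level set $A_0$, and there the two displayed inequalities, the intersecting bound $|\mathcal F_m| \le \tfrac12\binom{n}{m}$, and local LYM (Lemma~\ref{lemma:local-lym}) on these levels should close the estimate. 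I expect the main obstacle to be precisely this final combination. A crude decoupling — LYM on the outer levels plus the two shadow inequalities on the central three — overshoots the target by an additive $\Theta(\binom{n}{m-1}/n)$, because it still charges level $m$ against $\binom{n}{m}$ through the LYM term; the point is rather that level $m$ must only ever be charged against $\binom{n}{m-1}$, and the deficiency $1/\binom{n}{a_0}$ must be extracted from the fact that $\mathcal F_{m-1}$ (and the levels below it) avoid all supersets of $A_0$ — which in turn forces the compression to be arranged so that it never creates such a superset. Making the bookkeeping of this last step precise — in effect re-running the flat-antichain proof of the Milner-type bound for non $2$-covering antichains with the $A_0$-term present throughout — is where the real work lies.
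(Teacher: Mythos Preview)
Your odd-$n$ argument is correct and matches the paper exactly. The even-$n$ case, however, is left genuinely incomplete --- you yourself flag that the ``final combination'' after compressing to the three central levels is where ``the real work lies'', and indeed the sketch does not close: pushing the low levels \emph{up} risks creating supersets of $A_0$ (violating the antichain property), and keeping $A_0$ separate while simultaneously charging level $m$ only against $\binom{n}{m-1}$ is precisely the delicate point you have not resolved. Your ``degenerate'' case $a_0 = m-1$ is also not actually degenerate: it still requires $|\mathcal F| \le \binom{n}{m-1}$, which is the full content of the even case, so it cannot be set aside.

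The paper avoids all of this by pushing in \emph{one} direction only, and further than you propose. Instead of compressing to the window $\{m-1, m, m+1\}$, it pushes the entire family \emph{down} past the middle to levels $\le m-1$: starting from the top level $k = \mx_{\mathcal F} \ge m$, replace $\mathcal F_k$ by its shadow $\partial \mathcal F_k$ (for every $k \ge m$ the family $\mathcal F_k$ is intersecting --- automatically for $k>m$, and by the non-$2$-covering hypothesis for $k=m$ --- so Lemma~\ref{lemma-katona} gives $|\partial \mathcal F_k| \ge |\mathcal F_k|$), check that the resulting family is again a non-$2$-covering antichain disjointly containing $\mathcal F_{\le k-1}$, and iterate until $\mx \le m-1$. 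Since $|A_0| \le \epsilon n < m$, the set $A_0$ is never touched and survives in the final family $\mathcal F'$. Now $\mathcal F'$ is an antichain supported on levels $\le m-1$ with $A_0 \in \mathcal F'$ and $|\mathcal F'| \ge |\mathcal F|$, so LYM with the $A_0$-term isolated gives
\[
\frac{|\mathcal F|-1}{\binom{n}{m-1}} \;\le\; \frac{|\mathcal F'|-1}{\binom{n}{m-1}} \;\le\; \sum_{A \in \mathcal F'\setminus\{A_0\}} \frac{1}{\binom{n}{|A|}} \;\le\; 1 - \frac{1}{\binom{n}{a_0}},
\]
which is exactly the even-$n$ bound. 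No three-level bookkeeping, no upward compression, no worry about manufacturing supersets of $A_0$.
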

\begin{proof} Let $\mathcal{F} \subseteq 2^{[n]}$ and $A_0\in \mathcal{F}$ be the  objects as stated.
	Since $\mathcal{F}$ is a Sperner family, by LYM inequality we have
	\begin{equation*}
	\frac{1}{\binom{n}{|A_0|}} + \sum_{A\in \mathcal{F}\setminus \{A_0\}} \frac{1}{\binom{n}{|A|}} =\sum_{A\in \mathcal{F}} \frac{1}{\binom{n}{|A|}} \leq 1.
	\end{equation*}
	Hence,
	\begin{equation*}
	\frac{|\mathcal{F}|-1}{\binom{n}{\lfloor \frac{n}{2} \rfloor}} \leq
	\sum_{A\in \mathcal{F}\setminus \{A_0\}} \frac{1}{\binom{n}{|A|}} \leq 1 - \frac{1}{\binom{n}{|A_0|}}
	\leq 1-\frac{1}{\binom{n}{\lfloor \epsilon n\rfloor}}.
	\end{equation*}
	This shows that
	\begin{equation*}
	|\mathcal{F}| \leq \left(1-\frac{1}{\binom{n}{\lfloor \epsilon n\rfloor}}\right)\binom{n}{\lfloor\frac{n}{2}\rfloor}+1.
	\end{equation*}
	When $n$ is odd, we are done since $\lfloor n/2\rfloor =\lfloor \frac{n-1}{2}\rfloor$. Now assume that $n$ is even.
	\begin{claim}\label{claim:sperner} For even $n$, there exists a Sperner family $\mathcal{F}' \subseteq \binom{[n]}{\leq \frac{n}{2}-1}$  such that $A_0\in \mathcal{F}'$ and $|\mathcal{F}'|\geq |\mathcal{F}|$.
	\end{claim}
	\begin{proof}
		Denote $\mx_{\mathcal{F}}\triangleq\max\{|A|: A\in \mathcal{F}\}$. For any $k\leq n$, denote $\mathcal{F}_{k}\triangleq\{ A\in \mathcal{F}: |A|=k \}$ and $\mathcal{F}_{\leq k}\triangleq\{ A\in \mathcal{F}: |A|\leq k \}$.

 We proceed by induction on $\mx_\mathcal{F}$. When $\mx_{\mathcal{F}} \leq \frac{n}{2}-1$, the claim is trivial.
		Assume now that $\mx_{\mathcal{F}} = \frac{n}{2}$. Note that $\mathcal{F}_{\frac{n}{2}} \subseteq \mathcal{F}$ is a non 2-covering family, hence $\mathcal{F}_{\frac{n}{2}}$ is an intersecting family. By Lemma~\ref{lemma-katona}, we have $|\partial \mathcal{F}_{\frac{n}{2}}| \ge |\mathcal{F}_{\frac{n}{2}}|$. Since $\mathcal{F}$ is Sperner, it is clear that $\partial \mathcal{F}_{\frac{n}{2}}$ and $\mathcal{F}_{\leq \frac{n}{2}-1}$ are disjoint. 
		Let $\mathcal{F}'=\mathcal{F}_{\leq \frac{n}{2}-1} \cup \partial \mathcal{F}_{\frac{n}{2}} $. Hence $|\mathcal{F'}| \geq |\mathcal{F}|$ and $A_0\in \mathcal{F}'\subseteq \binom{[n]}{\leq \frac{n}{2}-1}$. Further, it is easy to check that  $\mathcal{F'}$ is a non $2$-covering Sperner family, so the claim holds for $\mx_{\mathcal{F}} = \frac{n}{2}$.

		
		Now suppose that the claim holds for $\mx_\mathcal{F} < k$ with $k\geq \frac{n}{2}+1$.  Let $\mathcal{F} \subseteq 2^{[n]}$ be a non 2-covering Sperner family with $\mx_\mathcal{F} = k$ containing some $A_0\in \mathcal{F}$ with $|A_0|\leq \epsilon n$.
 	Since $k\geq \frac{n}{2}+1$, $\mathcal{F}_{k} \subseteq \mathcal{F}$ is intersecting, and thus 	$|\partial \mathcal{F}_{k}| \ge |\mathcal{F}_{k}|$  by Lemma~\ref{lemma-katona}. Let $\mathcal{F}_0=\mathcal{F}_{\leq k-1} \cup \partial \mathcal{F}_k$. By similar arguments, $\mathcal{F}_0$ is  a non $2$-covering Sperner family containing some $A_0\in \mathcal{F}_0$ with $|A_0|\leq \epsilon n$ and $|\mathcal{F}_0|\geq |\mathcal{F}|$. Since $\mx_{\mathcal{F}_0} \leq k-1$, we apply the induction to $\mathcal{F}_0$ to get a Sperner family $\mathcal{F}' \subseteq \binom{[n]}{\leq \frac{n}{2}-1}$ such that $A_0\in \mathcal{F}'$ and $|\mathcal{F}'|\geq |\mathcal{F}_0| \geq |\mathcal{F}|$. This completes the proof of the claim.
	\end{proof}
	By Claim~\ref{claim:sperner}, there exists a Sperner family $\mathcal{F}' \subseteq \binom{[n]}{\leq \frac{n}{2}-1}$ such that $A_0\in \mathcal{F}'$ and $|\mathcal{F}'|\geq |\mathcal{F}|$. Applying the LYM inequality to $\mathcal{F}'$, we obtain
	\begin{equation*}
	\frac{1}{\binom{n}{|A_0|}} + \sum_{A\in \mathcal{F}'\setminus \{A_0\}} \frac{1}{\binom{n}{|A|}} =\sum_{A\in \mathcal{F}'} \frac{1}{\binom{n}{|A|}} \leq 1.
	\end{equation*}
	Hence,
	\begin{equation*}
	\frac{|\mathcal{F}|-1}{\binom{n}{ \frac{n}{2} -1}} \leq
	\frac{|\mathcal{F}'|-1}{\binom{n}{ \frac{n}{2} -1}} \leq
	\sum_{A\in \mathcal{F}'\setminus \{A_0\}} \frac{1}{\binom{n}{|A|}} \leq 1 - \frac{1}{\binom{n}{|A_0|}}
	\leq 1-\frac{1}{\binom{n}{\lfloor \epsilon n\rfloor}}.
	\end{equation*}
	This shows that
	\begin{equation*}
	|\mathcal{F}| \leq \left(1-\frac{1}{\binom{n}{\lfloor \epsilon n\rfloor}}\right)\binom{n}{\frac{n}{2} -1}+1
	= \left(1-\frac{1}{\binom{n}{\lfloor \epsilon n\rfloor}}\right)\binom{n}{\lfloor\frac{n-1}{2}\rfloor}+1.
	\end{equation*}

	Thus for every $n$ we have
	\begin{equation*}
	|\mathcal{F}| \leq \left(1-\frac{1}{\binom{n}{\lfloor \epsilon n\rfloor}}\right)\binom{n}{\lfloor\frac{n-1}{2}\rfloor}+1.
	\end{equation*}
\end{proof}

Now we are ready to prove Theorem~\ref{thm:2-wFP-main}.
\begin{proof}[Proof of Theorem~\ref{thm:2-wFP-main}]
	Let  $\mathcal{C} = \{\mathbf{c}^1,\mathbf{c}^2,\dots,\mathbf{c}^m\}\subseteq Q^n$ be a wide-sense $2$-frameproof code and let $i\in[m]$.
	Consider the coincidence family $\mathcal{X}_i$, which is Sperner by Theorem~\ref{thm-panoui}. The proof can be divided into two cases.
	
	\textbf{Case 1.} Suppose that any distinct $A,B\in \mathcal{X}_i$ satisfy $|A\Delta B|<(1-\epsilon)n$. Since $\mathcal{X}_i$ is Sperner,  $|A \setminus B|>0$ and $|B \setminus A|>0$ for any distinct $A,B\in \mathcal{X}_i$. By Pigeonhole principle, we have
	\begin{equation*}
	0< \min\{|A \setminus B|,|B \setminus A|\} < \frac{(1-\epsilon)n}{2}
	\end{equation*}
	 for every distinct $A,B\in \mathcal{X}_i$. Hence $\mathcal{X}_i$ is $[\lceil \frac{1-\epsilon}{2}n\rceil -1]$-close Sperner. By Theorem~\ref{thm:NP} and Theorem~\ref{thm:Xu-Yip-[s]},
	\begin{equation*}
	m= |\mathcal{X}_i|+1
	\leq
	\sum_{3\lceil \frac{1-\epsilon}{2}n \rceil -n-3 \leq   i   \leq \lceil \frac{1-\epsilon}{2}n\rceil -1}
	\binom{n}{i} +1.
	\end{equation*}
	
	\textbf{Case 2.} There exist two distinct sets $A,B\in \mathcal{X}_i$ such that $|A\Delta B|\geq (1-\epsilon)n$. Write $A=I(i,j)$ and $B=I(i,k)$ for some $j\neq k$. By Lemma~\ref{lemma:symmetric-diff} we have
	\begin{equation*}
	|I(j,k)| \leq |\overline{I(i,j) \Delta I(i,k)}| \leq n- (1-\epsilon)n =\epsilon n.
	\end{equation*}
	So the family $\mathcal{X}_j$ is a Sperner family of size $m-1$ containing a set $I(j,k)$ with $|I(j,k)| \leq \epsilon n$. By Proposition~\ref{prop:sperner},
	\begin{equation*}
	m= |\mathcal{X}_j|+1 \leq \left(1-\frac{1}{\binom{n}{\lfloor \epsilon n\rfloor}}\right)\binom{n}{\lfloor\frac{n-1}{2}\rfloor}+2.
	\end{equation*}
	
	Combining the two cases, we obtain
	\begin{equation*}
	m  \leq
	\max \left\lbrace
	\sum_{3\lceil \frac{1-\epsilon}{2}n \rceil -n-3 \leq   i   \leq \lceil \frac{1-\epsilon}{2}n\rceil -1}
	\binom{n}{i} +1,
	\left(1-\frac{1}{\binom{n}{\lfloor \epsilon n\rfloor}}\right)\binom{n}{\lfloor\frac{n-1}{2}\rfloor}+2 \right\rbrace.
	\end{equation*}
\end{proof}
  We remark that Kleitman's theorem in Theorem~\ref{kleit} is also applicable here in Case 1, but would give a slightly weaker bound.

\section{Wide-sense frameproof codes and cover-free families}\label{section:cover-free-and-wFP}

    In this section we establish a connection between wide-sense frameproof codes and cover-free families, and then prove Theorem~\ref{thm:general-upper-bound} and Theorem~\ref{thm:general-upper-bound-2}.

Recall that in Theorem~\ref{thm-panoui}, a code  $\mathcal{C}= \{\mathbf{c}^1,\mathbf{c}^2,\dots,\mathbf{c}^m\} \subseteq Q^n$ is a wide-sense $2$-frameproof code if and only if
	for each $i\in [m]$, the coincidence family $\mathcal{X}_i$ is a non $2$-covering Sperner family of size $m-1$. This is equivalent to say that each family $\overline{\mathcal{X}_i}$ is an intersecting $1$-cover-free family.
 Now we extend Theorem~\ref{thm-panoui} to  establish a relation between wide-sense $t$-frameproof codes and cover-free families. 

    \begin{theorem}\label{thm:connection-cover-free-wFP} {\color{black}Let $m,t\geq 2$ be integers,} and
    let $\mathcal{C} = \{\mathbf{c}^1,\mathbf{c}^2,\dots,\mathbf{c}^m\}\subseteq Q^n$ be a code of size $m$. Then  $\mathcal{C}$ is a wide-sense $t$-frameproof code if and only if
    for each $i\in [m]$, the family $\overline{\mathcal{X}_i}$ is a $(t-1)$-cover-free family of size $m-1$.
    \end{theorem}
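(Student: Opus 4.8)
The plan is to prove both implications directly from the definitions, using the fact that $\overline{I(i,j)}$ is precisely the set of coordinates on which $\mathbf{c}^i$ and $\mathbf{c}^j$ differ, so that $\overline{\mathcal{X}_i}=\{\overline{I(i,j)}:j\in[m]\setminus\{i\}\}$ is the family of these difference sets. The bridge to the frameproof condition is a coordinate-wise reformulation: for $X=\{\mathbf{c}^{j_1},\dots,\mathbf{c}^{j_r}\}\subseteq\mathcal{C}$, a coordinate $k$ lies in $U(X)$ exactly when $c_k^{j_1}=\dots=c_k^{j_r}$, i.e.\ when $k\notin\bigcup_{2\le a\le r}\overline{I(j_1,j_a)}$; and for $\mathbf{c}^i\notin X$ one has $\mathbf{c}^i\in\wdesc(X)$ iff $c_k^i$ equals that common value for every such $k$, i.e.\ iff $\overline{I(j_1,i)}\subseteq\bigcup_{2\le a\le r}\overline{I(j_1,j_a)}$. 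Thus ``being framed'' is literally an inclusion among difference sets, which is exactly what a violation of the $(t-1)$-cover-free property looks like.

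For the forward direction, assume $\mathcal{C}$ is wide-sense $t$-frameproof and fix $i$. First I would verify $|\overline{\mathcal{X}_i}|=m-1$: if $\overline{I(i,j)}=\overline{I(i,k)}$ for distinct $j,k\ne i$, then $I(i,j)=I(i,k)=U(\{\mathbf{c}^i,\mathbf{c}^j\})$ and $\mathbf{c}^k$ agrees with $\mathbf{c}^i$ on this set, so $\mathbf{c}^k\in\wdesc(\{\mathbf{c}^i,\mathbf{c}^j\})\setminus\{\mathbf{c}^i,\mathbf{c}^j\}$, a contradiction. Then, to show $\overline{\mathcal{X}_i}$ is $(t-1)$-cover-free, suppose distinct members $\overline{I(i,j_0)},\overline{I(i,j_1)},\dots,\overline{I(i,j_s)}$ with $1\le s\le t-1$ satisfy $\overline{I(i,j_0)}\subseteq\bigcup_{a=1}^s\overline{I(i,j_a)}$. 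Apply the reformulation with $X=\{\mathbf{c}^i,\mathbf{c}^{j_1},\dots,\mathbf{c}^{j_s}\}$ (of size $s+1\le t$): every $k\in U(X)$ avoids $\bigcup_{a=1}^s\overline{I(i,j_a)}$ and hence avoids $\overline{I(i,j_0)}$, so $c_k^{j_0}=c_k^i$ and $\mathbf{c}^{j_0}\in\wdesc(X)$; since $|\overline{\mathcal{X}_i}|=m-1$ the indices $j_0,\dots,j_s$ are distinct and distinct from $i$, so $\mathbf{c}^{j_0}\notin X$, contradicting the frameproof hypothesis.

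For the converse, assume every $\overline{\mathcal{X}_i}$ is $(t-1)$-cover-free of size $m-1$, and suppose $\mathcal{C}$ is not wide-sense $t$-frameproof: there are $X=\{\mathbf{c}^{j_1},\dots,\mathbf{c}^{j_r}\}$ with $r\le t$ and $\mathbf{c}^i\in\wdesc(X)\setminus X$. The case $r=1$ is impossible since $\wdesc(\{\mathbf{c}^{j_1}\})=\{\mathbf{c}^{j_1}\}$, so $2\le r\le t$. Fix any $\mathbf{c}^{j_1}\in X$ and run the reformulation in reverse inside $\overline{\mathcal{X}_{j_1}}$: any coordinate $k$ avoiding $\bigcup_{a=2}^r\overline{I(j_1,j_a)}$ lies in $U(X)$, whence $\mathbf{c}^i\in\wdesc(X)$ forces $c_k^i=c_k^{j_1}$; therefore $\overline{I(j_1,i)}\subseteq\bigcup_{a=2}^r\overline{I(j_1,j_a)}$. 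Since $i,j_2,\dots,j_r$ are distinct, all different from $j_1$, and $|\overline{\mathcal{X}_{j_1}}|=m-1$, these are $r$ distinct members of $\overline{\mathcal{X}_{j_1}}$ with $r-1\le t-1$, contradicting the $(t-1)$-cover-free property of $\overline{\mathcal{X}_{j_1}}$.

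The manipulations above (passing between unions and intersections under complementation, and the edge cases $m=2$, $r=1$, and the empty-set case in the definition of cover-freeness, which cannot arise since difference sets of distinct codewords are nonempty) are routine. The only point that genuinely needs care is ensuring that in both directions the difference sets produced are \emph{distinct} members of the relevant $\overline{\mathcal{X}_\bullet}$ — this is exactly why the statement includes the clause ``of size $m-1$'', and it is what makes a cover-free family (rather than a mere inclusion among possibly repeated difference sets) the correct object here. Beyond this bookkeeping I do not anticipate any obstacle; the whole argument is a coordinate-wise unwinding of the definitions, extending the $t=2$ equivalence of Theorem~\ref{thm-panoui}.
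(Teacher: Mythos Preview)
Your proposal is correct and follows essentially the same route as the paper's proof. The paper packages your coordinate-wise reformulation as the equivalent Definition~\ref{equiv-defn} and factors the converse through Lemma~\ref{inter} (which, like your argument, works inside $\overline{\mathcal{X}_{j_1}}$ for some $j_1\in X$ rather than inside $\overline{\mathcal{X}_i}$), but the logic, the choice of indices, and the use of the size-$(m-1)$ clause to guarantee distinctness are identical.
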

 
    In fact, if for each $i\in [m]$,  the family $\overline{\mathcal{X}_i}$ is a $(t-1)$-cover-free family of size $m-1$, then $\overline{\mathcal{X}_i}$ must be $t$-wise intersecting. See below. This means that the non $2$-covering property in  Theorem~\ref{thm-panoui} is not necessary.

\begin{lemma}\label{inter}
  Let $\mathcal{C} = \{\mathbf{c}^1,\mathbf{c}^2,\dots,\mathbf{c}^m\}\subseteq Q^n$ be any code.  If for each $i\in [m]$,  the family $\overline{\mathcal{X}_i}$ is a $(t-1)$-cover-free family of size $m-1$, then each $\overline{\mathcal{X}_i}$ must be $t$-wise intersecting.
\end{lemma}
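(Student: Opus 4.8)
The plan is to argue by contradiction: suppose some $\overline{\mathcal{X}_i}$ fails to be $t$-wise intersecting, produce from this failure a violation of the $(t-1)$-cover-free property of some $\overline{\mathcal{X}_j}$, and thereby contradict the hypothesis. Fix $i\in[m]$ and assume there are indices $j_1,\dots,j_s\in[m]\setminus\{i\}$ with $s\le t$ such that $\overline{I(i,j_1)}\cap\cdots\cap\overline{I(i,j_s)}=\emptyset$; equivalently $I(i,j_1)\cup\cdots\cup I(i,j_s)=[n]$. First I would translate this coordinatewise: every position $k\in[n]$ lies in some $I(i,j_\ell)$, i.e. $c^i_k=c^{j_\ell}_k$ for that $\ell$. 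Reordering so that $j_1$ is the index whose coincidence set is "used" the most, the key observation will be that for positions outside $I(i,j_1)$, the symbol $c^i_k$ is forced to agree with $c^{j_\ell}_k$ for some $\ell\ge 2$.

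The heart of the argument is to look at the family $\overline{\mathcal{X}_{j_1}}$ and show that $\overline{I(j_1,i)}\subseteq \overline{I(j_1,j_2)}\cup\cdots\cup\overline{I(j_1,j_s)}$, which — since $s-1\le t-1$ and these are $s$ distinct members of $\overline{\mathcal{X}_{j_1}}$ (distinctness uses that $\overline{\mathcal{X}_{j_1}}$ has size exactly $m-1$, so the map $j\mapsto I(j_1,j)$ is injective) — contradicts $(t-1)$-cover-freeness. To verify this containment, take any $k\in\overline{I(j_1,i)}$, so $c^{j_1}_k\neq c^i_k$; since $I(i,j_1)\cup\cdots\cup I(i,j_s)=[n]$ and $k\notin I(i,j_1)$, there is some $\ell\in\{2,\dots,s\}$ with $k\in I(i,j_\ell)$, i.e. $c^i_k=c^{j_\ell}_k$. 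Combined with $c^{j_1}_k\neq c^i_k$ this gives $c^{j_1}_k\neq c^{j_\ell}_k$, i.e. $k\in\overline{I(j_1,j_\ell)}$, which is what we wanted.

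One subtlety to handle carefully: the sets $I(j_1,i),I(j_1,j_2),\dots,I(j_1,j_s)$ must be pairwise distinct members of $\overline{\mathcal{X}_{j_1}}$ for the cover-free violation to be genuine. Distinctness of the complements is equivalent to distinctness of the $I(j_1,\cdot)$'s, and this is exactly where the hypothesis "$\overline{\mathcal{X}_{j_1}}$ has size $m-1$" (equivalently, $\mathcal{X}_{j_1}$ has $m-1$ distinct sets, one per index in $[m]\setminus\{j_1\}$) is used. Also note $s\ge 2$ is automatic since a single nonempty complement cannot be empty (a codeword agrees with itself nowhere is false — $\overline{I(i,j_1)}$ could in principle be all of $[n]$ if $\mathbf c^i,\mathbf c^{j_1}$ disagree everywhere, but then already $s=1$ gives $I(i,j_1)=\emptyset\neq[n]$ unless $n=0$, so genuinely $s\ge 2$). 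The main obstacle is simply keeping the bookkeeping of which complement lives in which family $\overline{\mathcal{X}_{j_1}}$ straight; the coordinatewise computation itself is routine once the right index $j_1$ is singled out.
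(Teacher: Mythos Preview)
Your argument is correct and is essentially the contrapositive of the paper's proof: the paper starts from the $(t-1)$-cover-freeness of $\overline{\mathcal{X}_{i_1}}$ to produce a coordinate $l$ with $c_l^{i_1}=\cdots=c_l^{i_s}\neq c_l^i$, whereas you assume such a coordinate fails to exist and derive the inclusion $\overline{I(j_1,i)}\subseteq\overline{I(j_1,j_2)}\cup\cdots\cup\overline{I(j_1,j_s)}$. Two small remarks: the ``reorder so that $j_1$ is used the most'' step is unnecessary (any $j_\ell$ works), and your parenthetical on $s\ge 2$ is slightly garbled --- the point is simply that $s=1$ would force $I(i,j_1)=[n]$, i.e.\ $\mathbf{c}^i=\mathbf{c}^{j_1}$, contradicting distinctness of codewords.
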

\begin{proof}
	For any $i\in [m]$ and $\{i_1,i_2,\dots,i_s \} \subseteq [m]\setminus\{i\}$ with $s\leq t$, we need to show that $\overline{I(i,i_1)}\cap \dots \cap \overline{I(i,i_s)}\neq \emptyset$.
 Since the family $\overline{\mathcal{X}_{i_1}}=\{\overline{I(i_1,j)}:  j\in [m]\setminus\{i_1\}\}$ has $m-1$ distinct sets, we see that $\overline{I(i_1,i)},\overline{I(i_1,i_2)},\dots,\overline{I(i_1,i_s)}$ are $s$ distinct sets in the family $\overline{\mathcal{X}_{i_1}}$. Since $\overline{\mathcal{X}_{i_1}}$ is $(t-1)$-cover-free and $s-1\leq t-1$, we have
    	 \begin{equation*}
    	 \overline{I(i_1,i)} \not\subseteq \overline{I(i_1,i_2)} \cup \overline{I(i_1,i_3)}\cup \dots \cup \overline{I(i_1,i_s)}.
    	 \end{equation*}
    	 It follows that
    	 \begin{equation*}
    	   I(i_1,i_2) \cap I(i_1,i_3)\cap \dots \cap  I(i_1,i_s) \not\subseteq I(i_1,i).
    	 \end{equation*}
    	 Thus there exists some $l\in (I(i_1,i_2) \cap I(i_1,i_3)\cap \dots \cap  I(i_1,i_s))\setminus I(i_1,i)$ and hence $c_l^{i_1}=c_l^{i_2}=\dots=c_l^{i_s} \neq c_l^i$ by definition of coincidence sets. Thus $l\in \overline{I(i,i_1)}\cap \dots \cap \overline{I(i,i_s)}$ and hence $\overline{I(i,i_1)}\cap \dots \cap \overline{I(i,i_s)}\neq \emptyset$, completing the proof.
\end{proof}


To prove Theorem~\ref{thm:connection-cover-free-wFP}, we use the following equivalent definition of wide-sense $t$-frameproof codes.

   \begin{definition}\label{equiv-defn}
Let $m,t\geq 2$. A code $\mathcal{C} = \{\mathbf{c}^1,\mathbf{c}^2,\dots,\mathbf{c}^m\}\subseteq Q^n$ is a wide-sense $t$-frameproof code if and only if for any $i\in [m]$ and any
    	$\{i_1,i_2,\dots,i_s \} \subseteq [m]\setminus\{i\}$ with $s\leq t$,  there exists some $l\in [n]$ such that $c_l^i\neq c_l^{i_1}=c_l^{i_2}=\dots=c_l^{i_s}$.
    \end{definition}
    The equivalence between Definition~\ref{equiv-defn} and the original one in Definition~\ref{orgde} is clear. We remark that when $m\geq t+1$, for any  $\{i_1,i_2,\dots,i_s \} \subseteq [m]$ with $s\leq t$, the existence of $l\in [n]$ such that $ c_l^{i_1}=c_l^{i_2}=\dots=c_l^{i_s}$ is always true. Otherwise $m\leq t$ since any new codeword will belong to $\wdesc(\{c^{i_1},c^{i_2},\ldots,c^{i_s}\})$.

    \begin{proof}[Proof of Theorem~\ref{thm:connection-cover-free-wFP}]
    	 Suppose $\mathcal{C} = \{\mathbf{c}^1,\mathbf{c}^2,\dots,\mathbf{c}^m\}\subseteq Q^n$ is a wide-sense $t$-frameproof code. For any $i\in [m]$ and any distinct pair $j,k\in [m]\setminus\{i\}$, by Definition~\ref{equiv-defn}, there exists some $l\in [n]$ such that $c_l^j\neq c_l^k=c_l^i$; hence $l\in I(i,k)$ but $l\notin I(i,j)$, so $I(i,j)\neq I(i,k)$. It follows that for each $i\in [m]$ the family $\overline{\mathcal{X}_i}$ has exactly $m-1$ distinct members.

    	
    	Now we prove that each $\overline{\mathcal{X}_i}$ is $(t-1)$-cover-free. Assume that for some $i\in[m]$ there exists $\{j_0,j_1,\dots,j_s\}\subseteq [m]\setminus \{i\}$ with $s\leq t-1$ such that
    	 \begin{equation*}
    	  \overline{I(i,j_0)} \subseteq \overline{I(i,j_1)} \cup \overline{I(i,j_2)}\cup \dots \cup \overline{I(i,j_s)}.
    	 \end{equation*}
    	     	  This implies that $I(i,j_1)\cap I(i,j_2)\cap\dots\cap I(i,j_s) \subseteq I(i,j_0)$. However, by Definition~\ref{equiv-defn}, there exists some $l\in [n]$ such that $c_l^{j_0}\neq c_l^{i}=c_l^{j_1}=c_l^{j_2}=\dots=c_l^{j_s}$.
    	     	  Hence $l\in (I(i,j_1)\cap I(i,j_2)\cap\dots\cap I(i,j_s)) \setminus I(i,j_0)\neq \emptyset$, which is a contradiction. Therefore, for each $i\in [m]$, the family $\overline{\mathcal{X}_i}$ is $(t-1)$-cover-free.
    	
    The converse indeed has been obtained from the proof of Lemma~\ref{inter}, which has shown that  for any $i\in [m]$ and $\{i_1,i_2,\dots,i_s \} \subseteq [m]\setminus\{i\}$ with $s\leq t$,  there exists some $l\in [n]$ such that $c_l^i\neq c_l^{i_1}=c_l^{i_2}=\dots=c_l^{i_s}$. So $\mathcal{C}$ is a  wide-sense $t$-frameproof code by Definition~\ref{equiv-defn}.
    %
    \end{proof}


Now we are ready to prove Theorem~\ref{thm:general-upper-bound}.
         \begin{proof}[Proof of Theorem~\ref{thm:general-upper-bound}]
         	Let $t\geq 3$ and let $\mathcal{C} = \{\mathbf{c}^1,\mathbf{c}^2,\dots,\mathbf{c}^m\}\subseteq Q^n$ be a code of size $m$. By Theorem~\ref{thm:connection-cover-free-wFP}, for any $i\in [m]$, $\overline{\mathcal{X}_i}$ is a $(t-1)$-cover-free family of size $m-1$. It follows from Theorem~\ref{thm-furedi} that
         	\begin{equation*}
         	 m-1 \leq t-1+ \binom{n}{\lceil \frac{n-(t-1)}{\binom{(t-1)+1}{2}} \rceil}.
         	\end{equation*}
         	Thus we have $m \leq \binom{n}{\lceil \frac{n-t+1}{\binom{t}{2}} \rceil}+t$, as desired.         	
         \end{proof}
            Observe that here we can use the better bound in~\eqref{equa-D'yachkov} for cover-free families given by D'yachkov and Rykov~\cite{D'yachkov} instead of Theorem~\ref{thm-furedi}, and then Theorem~\ref{thm:general-upper-bound-2} follows.

\section{Proof of Theorem~\ref{thm:tight-bound}}\label{section:pf-of-tight-bound}
In this section we characterize optimal cover-free families to prove Theorem~\ref{thm:tight-bound}.

   For a family $\mathcal{F}=\{F_1,\dots,F_w\} \subseteq 2^{[n]}$ of size $w$, a binary matrix $M \in \{0,1\}^{n\times w}$ is called the incidence matrix of $\mathcal{F}$ if, for all $u\in [n]$ and $v\in [w]$,  $M(u,v)=1$ if and only if $u\in F_v$.
   We say an $n\times w$ binary matrix $M \in \{0,1\}^{n\times w}$ is \emph{$t$-disjunct} if it is the incidence matrix of some $t$-cover-free family.
    Or equivalently, an $n\times w$ binary matrix $M \in \{0,1\}^{n\times w}$ is called $t$-disjunct if for any $j\leq t$ the boolean sum of any $j$ columns does not contain any other column. 
   The notion of $t$-disjunct matrices was introduced by Kautz and Singleton~\cite{Kautz} in a different terminology when they were studying nonrandom
   	superimposed binary codes which may be used for information retrieval system, data communication and magnetic memories.

    For simplicity, we work with $t$-disjunct matrices rather than set systems. In the language of $t$-disjunct matrices, Theorem~\ref{thm-shangguan-cover-free} can be restated as follows.

    \begin{theorem}[\cite{Shangguan2016-disjunct-matrices}]\label{thm:shangguan-disjunct}
    	Suppose $M$ is an $n\times w$ $t$-disjunct matrix. If $n < \frac{15+\sqrt{33}}{24}t^2$, then $w\leq n$.
    \end{theorem}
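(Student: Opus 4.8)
The plan is to recognize that Theorem~\ref{thm:shangguan-disjunct} is merely a reformulation of the already-cited Theorem~\ref{thm-shangguan-cover-free} through the dictionary between $t$-disjunct matrices and $t$-cover-free families, and to make this translation precise. By the definition given just above, an $n\times w$ binary matrix $M$ is $t$-disjunct exactly when it is the incidence matrix of some $t$-cover-free family $\mathcal{F}\subseteq 2^{[n]}$ whose members are the supports of the $w$ columns of $M$. I would first check that these $w$ columns are genuinely distinct: if two columns coincided, then one member of $\mathcal{F}$ would be contained in another, contradicting the $1$-cover-free (hence $t$-cover-free) property. Thus $M$ being $n\times w$ and $t$-disjunct is equivalent to the existence of a $t$-cover-free family $\mathcal{F}\subseteq 2^{[n]}$ with $|\mathcal{F}|=w$.

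With this equivalence in hand, the conclusion is immediate from the definition of $N^*(t)$ and the cited lower bound. By Theorem~\ref{thm-shangguan-cover-free} we have $N^*(t)\geq \frac{15+\sqrt{33}}{24}t^2$, so the hypothesis $n<\frac{15+\sqrt{33}}{24}t^2$ gives $n<N^*(t)$. Since $N^*(t)$ is by definition the smallest ground-set size admitting a $t$-cover-free family of size strictly larger than the ground set, no $t$-cover-free family on $[n]$ can have more than $n$ members when $n<N^*(t)$. Applying this to $\mathcal{F}$ yields $w=|\mathcal{F}|\leq n$, which is exactly the assertion. The only points needing care are the boundary bookkeeping (strict versus non-strict inequalities) and confirming that the numerical bound is consistent across the two statements, both of which are routine.

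The genuine content of the theorem resides in the cited inequality $N^*(t)\geq \frac{15+\sqrt{33}}{24}t^2$, which I am entitled to assume but whose proof is the real obstacle. Were one to reprove it from scratch, I would argue by contradiction from an $n\times w$ $t$-disjunct matrix with $w\geq n+1$ and classify the rows by their degree, that is, by how many columns contain them. A column possessing a degree-one (private) row is automatically safe, since such a row lies in no other column and cannot be covered; as there are at most $n$ private rows but $w\geq n+1$ columns, at least one column has all of its rows of degree at least two. For such columns one exploits degree-two rows, each of which forces a unique other column into any cover, and then controls the contribution of higher-degree rows through a covering estimate. Balancing the number of degree-one and degree-two rows against the sizes of the remaining columns produces a single inequality in the parameter $x=n/t^2$, and optimizing it yields $12x^2-15x+4\geq 0$, whose larger root is precisely $\frac{15+\sqrt{33}}{24}$. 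The hard part is this final optimization together with the quantitative covering estimate for columns that are safe only by virtue of their high-degree rows, which is exactly where the delicate discriminant $\sqrt{33}$ enters.
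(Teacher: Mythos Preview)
Your proposal is correct and matches the paper's treatment: the paper presents Theorem~\ref{thm:shangguan-disjunct} precisely as a restatement of Theorem~\ref{thm-shangguan-cover-free} in the language of disjunct matrices, with no additional proof given. Your dictionary argument making the translation explicit is accurate (the distinctness of columns is in fact already built into the paper's definition of incidence matrix of a family of size $w$), and your sketch of the underlying bound on $N^*(t)$ is extra material that the paper, like you, takes as a black box from~\cite{Shangguan2016-disjunct-matrices}.
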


    The bound is clearly tight since an $n\times n$ identity matrix is $t$-disjunct. However, Theorem~\ref{thm:shangguan-disjunct} does not characterize optimal matrices. In the following we show such a result, which will be used to prove Theorem~\ref{thm:tight-bound}.

    \begin{theorem}\label{thm:disjunct-n*n}
    	If $M$ is an $n\times n$ $t$-disjunct matrix with $1<n < \frac{15+\sqrt{33}}{24}t^2$, then $M$ is a permutation matrix. Or equivalently,  if $\mathcal{F}\subseteq 2^{[n]}$ is a $t$-cover-free family of size $n$ with $1<n < \frac{15+\sqrt{33}}{24}t^2$, then $\mathcal{F} = \{ \{1\},\{2\},\dots,\{n\} \}$.
    \end{theorem}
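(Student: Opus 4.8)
**Proof proposal for Theorem~\ref{thm:disjunct-n*n}.**

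The plan is to show that an $n\times n$ $t$-disjunct matrix $M$ with $n<\frac{15+\sqrt{33}}{24}t^2$ must, up to permuting rows and columns, be the identity. Since $M$ is $t$-disjunct, every column is nonempty (an empty column is contained in every other column's boolean sum); write $d_v:=|F_v|$ for the size of the $v$-th column as a subset of $[n]$. The key structural fact I want to exploit is a counting/averaging argument in the spirit of the proof of Theorem~\ref{thm:shangguan-disjunct} (Shangguan--Ge): if $M$ has $n$ columns and is $t$-disjunct, then the rows cannot be "too heavy", because a heavy row together with the disjunctness forces many columns, and having exactly $n$ columns pins the configuration down. Concretely, I would first argue that if some column $F_v$ has $|F_v|\geq 2$, then deleting column $v$ and all rows in $F_v$ leaves an $(n-|F_v|)\times(n-1)$ matrix that is still $t$-disjunct (disjunctness is inherited under deleting rows and columns); but an $r\times(r+|F_v|-1)$ $t$-disjunct matrix with $r+|F_v|-1 > r$ and $r = n-|F_v| < \frac{15+\sqrt{33}}{24}t^2$ contradicts Theorem~\ref{thm:shangguan-disjunct} — provided the reduced matrix has no empty columns. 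So the real content is handling the columns of the reduced matrix that may have become empty, i.e.\ columns originally contained in $F_v$.

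The second step addresses exactly that. Suppose $|F_v|\geq 2$ and let $J$ be the set of columns $u\neq v$ with $F_u\subseteq F_v$. By $t$-disjunctness (with $t\geq 2$, which holds since $n>1$ forces $t^2 > 24/(15+\sqrt{33}) \cdot 1$, hence $t\geq 2$... one should check $t\geq 2$ is actually implied, or note $t$-cover-free with the stated bound forces $t$ large enough), the columns in $J$ are pairwise incomparable and none is contained in a union of $t$ others, so in particular $\{F_u : u\in J\}\cup\{F_v\}$ restricted to the coordinate set $F_v$ forms a $t$-cover-free family on a ground set of size $|F_v|$; since $|F_v| < n < \frac{15+\sqrt{33}}{24}t^2$, Theorem~\ref{thm:shangguan-disjunct} gives $|J|+1\leq |F_v|$, i.e.\ $|J|\leq |F_v|-1$. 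Now delete column $v$ together with the $|F_v|$ rows in $F_v$; the $|J|$ columns in $J$ become empty, and every other column stays nonempty (it had an element outside $F_v$, else it would lie in $J$). Deleting those $|J|$ empty columns too, we obtain an $(n-|F_v|)\times(n-1-|J|)$ matrix $M'$ that is $t$-disjunct with no empty columns. Its number of columns is $n-1-|J| \geq n-1-(|F_v|-1) = n-|F_v|$, i.e.\ $M'$ has at least as many columns as rows, and its number of rows $n-|F_v| < \frac{15+\sqrt{33}}{24}t^2$. If the inequality $n-1-|J|\geq n-|F_v|$ is strict, Theorem~\ref{thm:shangguan-disjunct} is violated; if it is an equality we get $M'$ square $t$-disjunct and can argue by induction on $n$ that $M'$ is a permutation matrix, which together with $|F_v|\geq 2$ and $|J|=|F_v|-1$ would still have to be reconciled with $M$ having exactly $n$ columns.

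The cleanest way to run this is by induction on $n$: the base case $n=2$ is a direct check (a $2\times 2$ $t$-disjunct matrix with two distinct nonempty columns, neither contained in the other, over $\{0,1\}^2$ must be $I_2$ up to permutation). For the inductive step, assume $M$ has a column $F_v$ with $|F_v|\geq 2$, derive via the paragraph above that $|J|=|F_v|-1$ (equality must hold, else Theorem~\ref{thm:shangguan-disjunct} fails), note that then $\overline{F_v}$ is the union of the supports of the remaining $n-1-|J| = n-|F_v|$ columns not in $J\cup\{v\}$, and apply the induction hypothesis to the $t$-disjunct square matrix $M'$ on ground set $\overline{F_v}$. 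The main obstacle — and the place that needs the most care — is the "equality" case where shrinking does not immediately contradict Theorem~\ref{thm:shangguan-disjunct}: one must show that a column $F_v$ of size $\geq 2$ together with its $|F_v|-1$ "sub-columns" in $J$, sitting on top of a permutation matrix on $\overline{F_v}$, cannot be $t$-disjunct for $t$ in the stated range. This should follow by a final application of $t$-disjunctness to $F_v$ against the union of the $|F_v|-1$ columns of $J$ plus a suitable column of the permutation part: since $\bigcup_{u\in J}F_u$ can be taken to cover $F_v$ minus one point (the $F_u$'s are incomparable subsets of $F_v$ with $|J|=|F_v|-1$, forcing them to be essentially the $(|F_v|-1)$-subsets or singletons of $F_v$, whose union is all of $F_v$ when $|F_v|\geq 3$, and when $|F_v|=2$ gives $|J|=1$ so $F_u\subsetneq F_v$ is a singleton), one obtains $F_v\subseteq \bigcup_{u\in J}F_u$ or a near-cover that the extra permutation column completes — contradicting $t$-disjunctness since $|J|\leq |F_v|-1\leq n-1\leq t$. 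Thus no column of size $\geq 2$ can exist, every column is a singleton, and $n$ distinct singletons on $[n]$ force $M$ to be a permutation matrix.
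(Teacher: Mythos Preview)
Your argument has a fatal gap: the claim that ``disjunctness is inherited under deleting rows and columns'' is false. Deleting columns preserves $t$-disjunctness, but deleting rows does not --- two columns that were incomparable can become nested once some rows are removed. Ruszink\'o's lemma (Lemma~\ref{lemma:disjunct-delete}) says precisely that deleting a column $\mathbf{v}$ \emph{together with} all rows meeting it yields only a $(t-1)$-disjunct matrix, not a $t$-disjunct one. So your reduced matrix $M'$ is $(n-|F_v|)\times(n-1)$ and merely $(t-1)$-disjunct; to invoke Theorem~\ref{thm:shangguan-disjunct} you would need $n-|F_v|<\frac{15+\sqrt{33}}{24}(t-1)^2$, which fails unless $|F_v|\gtrsim \frac{15+\sqrt{33}}{12}t$ --- something an arbitrary column of weight $\geq 2$ need not satisfy. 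Incidentally, your set $J$ of columns contained in $F_v$ is always empty, since a $t$-disjunct matrix with $t\ge 1$ is already a Sperner family; so the entire discussion of $J$ is moot, the inequality $n-1-|J|\geq n-|F_v|$ is automatically strict whenever $|F_v|\geq 2$, but you still cannot draw the contradiction.

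The paper avoids this loss by first proving (Lemma~\ref{lemma:key}) that $M$ must contain an \emph{isolated} column $\mathbf{u}$: one for which some row has a $1$ only in $\mathbf{u}$. This requires real work --- a private-pair counting argument when all columns have small weight, and the Ruszink\'o reduction when some column has large weight (this dichotomy at the threshold $\frac{15+\sqrt{33}}{12}t$ is where the constant $\frac{15+\sqrt{33}}{24}$ actually arises). Once an isolated column exists, delete it together with just that single isolating row; since that row is all-zero on the remaining columns, the resulting $(n-1)\times(n-1)$ submatrix is genuinely $t$-disjunct (no parameter drop), and the induction on $n$ goes through cleanly.
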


    The proof idea of Theorem~\ref{thm:disjunct-n*n} is motivated by the arguments in \cite{Shangguan2016-disjunct-matrices}. For a binary matrix $M$, the weight of a column $\mathbf{u}$, denoted by $|\mathbf{u}|$, is the number of $1$'s in $\mathbf{u}$. A column $\mathbf{u}$ of $M\in \{0,1\}^{n\times w}$ is called isolated if there exists some $r\in[n]$ such that $u_r=1$ but $u'_r=0$ for any other column $\mathbf{u}'$. For a
    given matrix $M\in \{0,1\}^{n\times w}$, a subset of $[n]$ is private if it belongs to a unique column. Here we abuse the notation to write  $T\subseteq \mathbf{u}$ to mean $u_i=1$ for all $i\in T$.  For each column $\mathbf{u}$, denote $P(\mathbf{u}):=\{T\in \binom{[n]}{2}: T\subseteq \mathbf{u} \text{ and } T \text{ is private}\}$ as the collection of private 2-subsets contained in $\mathbf{u}$ and denote $N(\mathbf{u}):=\{T\in \binom{[n]}{2}: T\subseteq \mathbf{u}  \text{ and } T \text{ is not private}\}$. Clearly $|P(\mathbf{u})|+|N(\mathbf{u})|=\binom{|\mathbf{u}|}{2}$.

    \begin{lemma}[\cite{Ruszinko}, Lemma 3.3]{\label{lemma:disjunct-delete}}
    	Let $M \in \{0,1\}^{n\times w}$ be a $t$-disjunct matrix and let $\mathbf{u}$ be a column of $M$ with weight $|\mathbf{u}|$. Then by deleting $\mathbf{u}$ and all rows intersecting it we get an $(n-|\mathbf{u}|)\times(w-1)$ $(t-1)$-disjunct matrix.
    \end{lemma}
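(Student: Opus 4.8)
The plan is to argue directly with supports. For a column $\mathbf v$ of $M$ write $\mathrm{supp}(\mathbf v)\subseteq[n]$ for its set of $1$-rows; set $R:=\mathrm{supp}(\mathbf u)$, so $|R|=|\mathbf u|$, and let $R':=[n]\setminus R$. For every column $\mathbf v\ne\mathbf u$ let $\mathbf v'$ be the restriction of $\mathbf v$ to the row set $R'$; then the claimed matrix $M'$ is the $(n-|\mathbf u|)\times(w-1)$ array with columns $\{\mathbf v':\mathbf v\ne\mathbf u\}$. I need to check two things: that these $w-1$ columns are pairwise distinct (so that $M'$ really has $w-1$ columns), and that the boolean sum of any $j\le t-1$ of them fails to contain any other one of them.

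Both follow from one observation, which is the heart of the matter: \emph{to produce a $1$ of a column $\mathbf v_0$ that avoids a prescribed family $\mathcal G$ of at most $t-1$ other columns and also survives the deletion of the rows $R$, apply the $t$-disjunctness of $M$ to $\mathbf v_0$ against the enlarged family $\mathcal G\cup\{\mathbf u\}$.} Indeed, $\mathbf v_0$ together with the at-most-$t$ columns of $\mathcal G\cup\{\mathbf u\}$ is a list of distinct columns of $M$ with at most $t$ on the ``covering'' side, so $t$-disjunctness yields a row $s$ with $M(s,\mathbf v_0)=1$ while $M(s,\mathbf v)=0$ for all $\mathbf v\in\mathcal G$ and $M(s,\mathbf u)=0$; the last equality forces $s\notin R$, i.e. $s\in R'$, so $s$ is a row of $M'$ with $s\in\mathbf v_0'\setminus\bigcup_{\mathbf v\in\mathcal G}\mathbf v'$. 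Taking $\mathcal G=\{\mathbf w\}$ for an arbitrary column $\mathbf w\notin\{\mathbf u,\mathbf v_0\}$ (legitimate since $t\ge2$) shows $\mathbf v_0'\ne\mathbf w'$, which gives the distinctness; taking $\mathcal G=\{\mathbf v_1,\dots,\mathbf v_j\}$ with $j\le t-1$ shows $\mathbf v_0'\not\subseteq\mathbf v_1'\cup\cdots\cup\mathbf v_j'$, which is exactly the $(t-1)$-disjunctness of $M'$. (One small bookkeeping point: distinct columns of $M'$ lift to distinct columns of $M$, since equal columns of $M$ restrict to equal columns of $M'$; this is what lets one feed distinct columns into the $t$-disjunctness of $M$.)

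I do not anticipate a genuine obstacle. The only thing that requires care is precisely the point above: a bare application of $t$-disjunctness of $M$ to $\mathbf v_0$ against $\mathcal G$ would produce a witness row that might lie in $R$ and hence be deleted, so one must spend one slot of the $t$-disjunctness budget on $\mathbf u$ itself — this is exactly why the parameter degrades from $t$ to $t-1$ rather than staying at $t$. A minor side remark is that the distinctness of the restricted columns really does use $t\ge2$ (for $t=1$ the conclusion ``$0$-disjunct'' is degenerate and the statement is only of interest for $t\ge2$), and that one should track that $\mathbf v_0$, the $\le t-1$ columns of $\mathcal G$, and $\mathbf u$ together still fit within the hypothesis's scope of ``at most $t$ covering columns.''
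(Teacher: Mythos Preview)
Your argument is correct and is exactly the standard one: adjoin $\mathbf{u}$ to the covering family so that the witness row produced by $t$-disjunctness automatically lies outside $R=\mathrm{supp}(\mathbf u)$, at the cost of one unit of the disjunctness parameter. The paper does not give its own proof of this lemma (it is quoted from Ruszink\'{o}), so there is nothing to compare against; your write-up is precisely the intended proof, including the minor bookkeeping that the restricted columns remain pairwise distinct.
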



     \begin{lemma}[\cite{Shangguan2016-disjunct-matrices}, Lemma 3.2]\label{lemma:shangguan-matching} Suppose $M\in \{0,1\}^{n\times w}$ is a $t$-disjunct matrix without isolated columns. Then for any column $\mathbf{u}$ with weight $|\mathbf{u}|=t+s$, where $1\leq s\leq t-1$, we have
     	\begin{equation*}
     	     	|N(\mathbf{u})| \leq\max\left\lbrace \binom{2s-1}{2},\binom{t+s}{2}-\binom{t+1}{2}\right\rbrace =
     	\begin{cases} \binom{t+s}{2}-\binom{t+1}{2}, & s\leq\frac{2t+2}{3},\\\binom{2s-1}{2}, & s\geq\frac{2t+2}{3}.
     	\end{cases}
     	\end{equation*}
     \end{lemma}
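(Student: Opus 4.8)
The plan is to recast the quantity $|N(\mathbf{u})|$ as the number of edges of a graph and to extract the only feature of $t$-disjunctness that matters, namely a lower bound on a covering number. Write $R:=\{i\in[n]:u_i=1\}$ for the support of $\mathbf{u}$, so $|R|=t+s$, and for every column $\mathbf{v}\neq\mathbf{u}$ let $R_{\mathbf{v}}:=\{i\in R:v_i=1\}$ be its trace on $R$. A $2$-subset $T\subseteq R$ is non-private exactly when $T\subseteq R_{\mathbf{v}}$ for some $\mathbf{v}\neq\mathbf{u}$, so if $H$ denotes the graph on vertex set $R$ whose edges are the non-private pairs, then $|N(\mathbf{u})|=|E(H)|$ and $|P(\mathbf{u})|=\binom{t+s}{2}-|E(H)|$ counts the non-edges. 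I would first translate the two hypotheses into statements about the traces: since $M$ has no isolated column, $\mathbf{u}$ has no private row, so the traces $R_{\mathbf{v}}$ cover $R$; and since $M$ is $t$-disjunct, the union of any $t$ columns other than $\mathbf{u}$ fails to contain $\mathbf{u}$, which on $R$ says that no $t$ of the traces cover $R$. Hence the traces form a cover of $R$ with covering number at least $t+1$, and $E(H)=\bigcup_{\mathbf{v}}\binom{R_{\mathbf{v}}}{2}$.

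Next I would make two simplifying reductions. First, passing to $W:=\bigcup\{R_{\mathbf{v}}:|R_{\mathbf{v}}|\ge 2\}$, the set of vertices incident to an edge of $H$, leaves $E(H)$ unchanged while only shrinking the ground set; since the target bound is monotone in $|R|$ this lets me assume every vertex lies in some non-singleton trace. Second, I would fix a minimum cover $R_{\mathbf{v}_1},\dots,R_{\mathbf{v}_p}$ of $R$; by minimality each $R_{\mathbf{v}_i}$ owns a private element, so $p\ge t+1$ yields at least $t+1$ distinct elements $x_1,\dots,x_p$, and the remaining \emph{exceptional set} $Z:=R\setminus\{x_1,\dots,x_p\}$ has $|Z|\le (t+s)-(t+1)=s-1$.

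With this in place the argument would split according to where the edges sit. If every edge of $H$ meets $Z$, then $E(H)$ is contained in the set of pairs touching $Z$, so $|N(\mathbf{u})|\le\binom{t+s}{2}-\binom{t+s-|Z|}{2}\le\binom{t+s}{2}-\binom{t+1}{2}$, which is the first branch of the bound. The substantial case is when some edge lies entirely inside the set of private elements $\{x_1,\dots,x_p\}$; here I must show $|N(\mathbf{u})|\le\binom{2s-1}{2}$. This is exactly where the covering number $\ge t+1$ has to be used quantitatively: because so many elements can be reached only through singleton traces, the non-singleton traces must be confined to a set of size at most $2s-1$, and I would prove this by induction on $t$ using the deletion Lemma~\ref{lemma:disjunct-delete} --- deleting a carefully chosen non-singleton column $\mathbf{v}^{\ast}$ together with its rows produces a $(t-1)$-disjunct matrix in which $\mathbf{u}$ has weight $(t-1)+(s-|R_{\mathbf{v}^{\ast}}|+1)$, so the induction hypothesis controls the surviving non-private pairs while the pairs meeting $R_{\mathbf{v}^{\ast}}$ are bounded separately. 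The two regimes of the maximum, and the crossover at $s=\tfrac{2t+2}{3}$, then fall out of balancing these two contributions, the final equality of the two binomial expressions at that value of $s$ being a routine computation.

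The main obstacle I anticipate is precisely this last step: the naive accounting of the pairs meeting $R_{\mathbf{v}^{\ast}}$ is too lossy to close the induction in the regime where $\binom{2s-1}{2}$ dominates, so the deletion must be organised so that the cross pairs between $R_{\mathbf{v}^{\ast}}$ and its complement are mostly private --- equivalently, $\mathbf{v}^{\ast}$ should be chosen from the dense part of $H$ --- and one must check that deleting it does not create isolated columns that spoil the hypotheses (any such column can be stripped off with its private rows, which only lowers the weight of $\mathbf{u}$). Getting this choice and the induction bookkeeping to yield exactly $\max\{\binom{2s-1}{2},\binom{t+s}{2}-\binom{t+1}{2}\}$, rather than a weaker bound, is the technical heart of the proof.
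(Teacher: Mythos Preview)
The paper does not prove this lemma at all: it is quoted from \cite{Shangguan2016-disjunct-matrices} (their Lemma~3.2) and used as a black box in the proof of Lemma~\ref{lemma:key}. There is therefore no in-paper argument to compare your sketch against.

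On your sketch itself: the translation into the graph $H$ on $R=\operatorname{supp}(\mathbf u)$ and the observation that $t$-disjunctness forces the traces $\{R_{\mathbf v}\}$ to have covering number at least $t+1$ are correct and are exactly the right starting point. Case~1 (every edge of $H$ meets the exceptional set $Z$) is handled cleanly. But two steps after that are not yet proofs. First, the ``reduction'' to $W=\bigcup\{R_{\mathbf v}:|R_{\mathbf v}|\ge2\}$ via monotonicity is not well-posed: the parameters $t$ and $s$ are fixed by the ambient matrix and the column $\mathbf u$, so shrinking the ground set does not produce a smaller instance of the same lemma, and ``monotone in $|R|$'' is not the relevant statement. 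Second, and more seriously, Case~2 remains a plan rather than an argument. You aim there for $|N(\mathbf u)|\le\binom{2s-1}{2}$, which in the regime $s\le\tfrac{2t+2}{3}$ is strictly \emph{stronger} than the lemma's assertion, so you should not expect an induction to close on that target without extra structural input. Moreover, the deletion step via Lemma~\ref{lemma:disjunct-delete} lands you at weight $(t-1)+(s-|R_{\mathbf v^\ast}|+1)$ in a $(t-1)$-disjunct matrix, which falls outside the inductive range $1\le s'\le t-2$ whenever $|R_{\mathbf v^\ast}|>s$; nothing you have written prevents a trace from being that large, and the proposed fix (strip off any newly isolated columns) changes the weight of $\mathbf u$ in a way you do not control. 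As you yourself note at the end, the bookkeeping that would make this induction yield the exact maximum is missing, and that is precisely the content of the lemma.
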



The following lemma was implicitly shown in \cite{Shangguan2016-disjunct-matrices}. In particular, the results for $n\times n$ and $n\times (n-1)$ $t$-disjunct matrices in the following lemma are crucial for us, which do not follow directly from the results in \cite{Shangguan2016-disjunct-matrices}. However, their method does work for these two cases and we follow their method to present a proof.
     \begin{lemma}{\label{lemma:key}}
     	 Let $t\geq 2$ and let $M \in \{0,1\}^{n\times w}$ be a $t$-disjunct matrix with $w\geq n-1$. If $n< \frac{15+\sqrt{33}}{24}t^2$, then there exists an isolated column in $M$.
     \end{lemma}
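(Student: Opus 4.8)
The plan is to argue by contradiction: suppose $M \in \{0,1\}^{n\times w}$ is $t$-disjunct with $w \geq n-1$, $n < \frac{15+\sqrt{33}}{24}t^2$, and $M$ has no isolated column. The goal is to derive a contradiction by a counting argument on private $2$-subsets, mimicking the proof of Theorem~\ref{thm:shangguan-disjunct} in \cite{Shangguan2016-disjunct-matrices} but pushed one column further. First I would record the easy structural facts. Since $M$ has no isolated column, every column has weight at least $2$; moreover every column of weight $\leq t$ must contain at least one private $2$-subset, because if $|\mathbf{u}| \leq t$ and no $2$-subset of $\mathbf{u}$ were private then every pair inside $\mathbf{u}$ sits in some other column, and (after a short argument using $t$-disjunctness) $\mathbf{u}$ would be covered by the boolean sum of $\leq t$ other columns, contradicting disjunctness. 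For columns $\mathbf{u}$ of weight $t+s$ with $1 \leq s \leq t-1$, Lemma~\ref{lemma:shangguan-matching} bounds $|N(\mathbf{u})|$, hence bounds $|P(\mathbf{u})| = \binom{|\mathbf{u}|}{2} - |N(\mathbf{u})|$ from below.

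**The counting.**
Next I would double-count the set $\mathcal{P}$ of all private $2$-subsets of $[n]$ across all columns. On the one hand $|\mathcal{P}| \leq \binom{n}{2}$, since each private $2$-subset lies in exactly one column and hence is counted once. On the other hand $\sum_{\mathbf{u}} |P(\mathbf{u})| = |\mathcal{P}|$. So it suffices to show $\sum_{\mathbf{u}} |P(\mathbf{u})| > \binom{n}{2}$ under our hypotheses. I would split the columns by weight: light columns with $|\mathbf{u}| \leq t$ contribute at least $1$ each; heavy columns with $|\mathbf{u}| = t+s$, $1 \leq s \leq t-1$, contribute at least $\binom{t+s}{2} - \max\{\binom{2s-1}{2}, \binom{t+s}{2}-\binom{t+1}{2}\}$; and columns of weight $\geq 2t$ I would handle by applying Lemma~\ref{lemma:disjunct-delete} (deleting such a column and the rows it meets leaves a $(t-1)$-disjunct matrix on $< \frac{15+\sqrt{33}}{24}t^2 - 2t$ rows, which one checks is $\leq \frac{15+\sqrt{33}}{24}(t-1)^2$, so it has at most as many columns as rows by Theorem~\ref{thm:shangguan-disjunct}, forcing the deleted column itself to contain many private pairs). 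The arithmetic is set up so that the total private-pair count, as a function of $n$ and the weight distribution, exceeds $\binom{n}{2}$ precisely when $n < \frac{15+\sqrt{33}}{24}t^2$; the constant $\frac{15+\sqrt{33}}{24}$ is exactly what makes the worst-case weight distribution (columns of weight around $\frac{4t}{3}$) break even. Since $w \geq n-1$, even losing one column's worth of private pairs compared with the $w \geq n$ case, the inequality still goes through — this is the only place the bound $w \geq n-1$ (rather than $w \geq n$) is used, and it costs at most $O(t)$ in the count, absorbed by the slack in $n < \frac{15+\sqrt{33}}{24}t^2$ versus the extremal threshold.

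**The main obstacle.**
The hard part will be the careful bookkeeping to show that the $w \geq n-1$ case really does still close, since in \cite{Shangguan2016-disjunct-matrices} the analogous estimate is stated for $w \geq n$ (equivalently $w = n$ after the easy reduction) and the loss of a column is not entirely free. Concretely, I expect to need: (i) a clean lower bound $\sum_{\mathbf{u}}|P(\mathbf{u})| \geq \binom{n}{2} - \binom{n-1}{2} + (\text{genuine surplus})$ is the wrong way to phrase it — rather, one shows $\sum_{\mathbf{u}}|P(\mathbf{u})|$ over the $w \geq n-1$ columns already exceeds $\binom{n}{2}$ because the surplus per column in the non-extremal regime is bounded below by a positive quantity linear in $t$, and $n < \frac{15+\sqrt{33}}{24}t^2$ leaves room for one fewer column; (ii) verifying the case analysis on $s$ relative to $\frac{2t+2}{3}$ and the reduction for weight-$\geq 2t$ columns chains correctly, in particular that after deleting a heavy column the reduced parameters still satisfy the hypothesis of Theorem~\ref{thm:shangguan-disjunct} with $t-1$. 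I would organize the final computation as a single inequality in $n$, reduce it to checking a quadratic in $t$, and confirm the quadratic's discriminant gives exactly the stated constant, so that the strict inequality $n < \frac{15+\sqrt{33}}{24}t^2$ yields a strict contradiction with $|\mathcal{P}| \leq \binom{n}{2}$.
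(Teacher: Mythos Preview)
Your overall strategy (contradiction, per-column counting of private $2$-subsets, and deletion of a heavy column) is the same as the paper's, but the execution has a genuine gap at the key technical point: the choice of weight threshold.

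First, a simplification you miss. With no isolated column, every column has weight at least $t+1$: each $1$ in a column $\mathbf u$ lies in some other column, so if $|\mathbf u|\le t$ then $\mathbf u$ is covered by the boolean sum of at most $t$ other columns, contradicting $t$-disjunctness. Thus there are no ``light'' columns at all, and your weaker claim that a light column contains at least one private pair is both unnecessary and far too weak to feed into the counting.

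The serious issue is your threshold $2t$ for invoking Lemma~\ref{lemma:disjunct-delete}. If you allow $s$ to range up to $t-1$ in Lemma~\ref{lemma:shangguan-matching}, then at $s=t-1$ one gets
\[
|P(\mathbf u)|\ \ge\ \binom{2t-1}{2}-\binom{2t-3}{2}\ =\ 4t-5,
\]
only linear in $t$; summing over $w\ge n-1$ columns gives merely $n\gtrsim 8t$, nowhere near $\frac{15+\sqrt{33}}{24}t^2$. The paper instead splits at weight $\frac{15+\sqrt{33}}{12}t$. In Case~1 every column has weight $t+s$ with $1\le s\le \frac{3+\sqrt{33}}{12}t$, and on this range Lemma~\ref{lemma:shangguan-matching} gives $|P(\mathbf u)|\ge \frac{15+\sqrt{33}}{48}t^2$ uniformly; summing over $w\ge n-1$ columns yields $\binom{n}{2}\ge (n-1)\cdot\frac{15+\sqrt{33}}{48}t^2$, hence $n\ge \frac{15+\sqrt{33}}{24}t^2$. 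The constant is determined by the \emph{upper} endpoint of this range, where $|P(\mathbf u)|$ is minimised---not by weight $\tfrac{4t}{3}$, which is where the bound on $|P(\mathbf u)|$ is \emph{maximised}.

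In Case~2 (some $\mathbf v$ with $|\mathbf v|\ge\frac{15+\sqrt{33}}{12}t$) the paper does not try to count private pairs in $\mathbf v$. It deletes $\mathbf v$ and its rows via Lemma~\ref{lemma:disjunct-delete}, checks $n-|\mathbf v|<\frac{15+\sqrt{33}}{24}(t-1)^2$, and applies Theorem~\ref{thm:shangguan-disjunct} to conclude $w-1\le n-|\mathbf v|$; but $|\mathbf v|\ge t+1\ge 3$ and $w\ge n-1$ give $w-1\ge n-2>n-|\mathbf v|$, an immediate contradiction. No slack analysis is needed: the factor $(n-1)$ in $\binom{n}{2}=\frac{n(n-1)}{2}$ cancels exactly against $w\ge n-1$ in Case~1.
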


     \begin{proof}
     	Suppose for a contradiction that $M$ does not have any isolated column. First we show that each column of $M$ has weight at least $t+1$. Indeed, since every column in $M$ is not isolated, any $1$ in a column is contained in the same row of some other column. Hence if the weight of some column $\mathbf{u}$ is at most $t$, then $\mathbf{u}$ is contained in the boolean sum of at most $t$ other columns, which is impossible since $M$ is $t$-disjunct. This shows that the weight of every column in $M$ is at least $t+1$. 
     	We consider two cases below.
     	
     	\textbf{Case 1.} Suppose that every column $\mathbf{u}$ has weight $t+1\leq |\mathbf{u}| \leq \frac{15+\sqrt{33}}{12}t$. Write $|\mathbf{u}| = t+s$ where $1\leq s \leq \frac{3+\sqrt{33}}{12}t$. If $1\leq s\leq \frac{2t+2}{3}$, by Lemma~\ref{lemma:shangguan-matching},
     	  \begin{equation*}
     	   |P(\mathbf{u})| = \binom{|\mathbf{u}|}{2} - N(\mathbf{u}) \geq \binom{t+s}{2} - \left( \binom{t+s}{2}-\binom{t+1}{2} \right) = \binom{t+1}{2}
     	   \geq \frac{15+\sqrt{33}}{48}t^2.
     	  \end{equation*}
     	  If $\frac{2t+2}{3} < s \leq \frac{3+\sqrt{33}}{12}t$, by Lemma~\ref{lemma:shangguan-matching},
     	  \begin{equation*}
     	  |P(\mathbf{u})| = \binom{|\mathbf{u}|}{2} - N(\mathbf{u}) \geq \binom{t+s}{2} - \binom{2s-1}{2}
     	  \geq \frac{t^2+2ts-3s^2}{2}
     	  = \frac{1+2s/t - 3(s/t)^2}{2}t^2
     	  \geq \frac{15+\sqrt{33}}{48}t^2.
     	  \end{equation*}
     	  Thus $|P(\mathbf{u})| \geq \frac{15+\sqrt{33}}{48}t^2$ for every column $\mathbf{u}$. Observe that the union $\bigcup_\mathbf{u} P(\mathbf{u}) \subseteq \binom{[n]}{2}$ is  disjoint by definition of private subsets. Hence we have
     	  \begin{equation*}
     	  \binom{n}{2} \geq \sum_\mathbf{u} P(\mathbf{u}) \geq w\cdot \frac{15+\sqrt{33}}{48}t^2
     	  \geq (n-1) \cdot \frac{15+\sqrt{33}}{48}t^2.
     	  \end{equation*}
     	  It follows that $n\geq \frac{15+\sqrt{33}}{24}t^2$, contradicting to the assumption that $n< \frac{15+\sqrt{33}}{24}t^2$.
     	
     	  \textbf{Case 2.}  There exists some column $\mathbf{v}$ with weight at least $\frac{15+\sqrt{33}}{12}t$.
     	  Since $M$ is $t$-disjunct, it is clear that $|\mathbf{v}|< n$.
     	  By Lemma~\ref{lemma:disjunct-delete}, we can get a $(t-1)$-disjunct matrix $M'\in \{0,1\}^{(n-|\mathbf{v}|)\times (w-1)}$. Note that $|\mathbf{v}| \geq t+1 \geq 3$ since $t\geq 2$. Hence $n-|\mathbf{v}| \leq n-3 < w-1 $ by the assumption that $w\geq n-1$. However, since
     	  $$
     	  n-|\mathbf{v}| \leq n - \frac{15+\sqrt{33}}{12}t< \frac{15+\sqrt{33}}{24}t^2 - \frac{15+\sqrt{33}}{12}t \leq \frac{15+\sqrt{33}}{24}(t-1)^2,
     	  $$
     	  by Theorem~\ref{thm:shangguan-disjunct} we see that the number of columns of $M'$ satisfies
     	  $
     	  w-1 \leq n-|\mathbf{v}|
     	  $,
     	   which leads to a contradiction.
     	
     	   Therefore, there must exist an isolated column in $M$.
     \end{proof}

       \begin{proof}[Proof of Theorem~\ref{thm:disjunct-n*n}]
       	If $t=1$, there does not exist any integer $n$ such that $1<n < \frac{15+\sqrt{33}}{24}t^2$. Now let $t\geq 2$ be fixed.
       	We proceed by induction on $n$.
       	When $n=2$, it is trivial.
       	Suppose the result holds for every $k\times k$ $t$-disjunct matrix with $1<k<n < \frac{15+\sqrt{33}}{24}t^2$.
       	Let $M$ be an $n\times n$ $t$-disjunct matrix where $2<n < \frac{15+\sqrt{33}}{24}t^2$.
       	
       	By Lemma~\ref{lemma:key}, there exists an isolated column $\mathbf{u}$ in $M$. Without loss of generality, by rearranging the rows and columns, we can assume that $\mathbf{u}$ is the first column with the first row incident to $\mathbf{u}$ but not to any other column.
       	Then the matrix has the form
       	\begin{displaymath}       	
       	M =
       	\left( \begin{array}{c|ccc}
       	1   & 0 & \cdots  & 0  \\ \hline
       	a_2   &     &     &   \\
       	\vdots   &     & M_1  &   \\
       	a_{n}   &     &     &   \\
       	\end{array} \right).
       	\end{displaymath}
       	It is clear that $M_1$ is also a $t$-disjunct matrix by definition.  Note that $M_1$ is an $(n-1)\times (n-1)$ matrix. By the induction hypothesis, the matrix $M_1$ is a permutation matrix. We claim that $a_2=\dots=a_n=0$. Indeed, if $a_i=1$ for some $2\leq i\leq n$, let $\mathbf{u}'\neq \mathbf{u}$ be the column of $M$ having 1 in the $i$-th row. Then the column $\mathbf{u}'$ is covered by $\mathbf{u}$ since $\mathbf{u}'$ has exactly one $1$ entry, contradicting to the assumption that $M$ is $t$-disjunct. Thus $a_2=\dots=a_n=0$ and therefore $M$ is a permutation matrix, completing the proof.
       \end{proof}

       Now we prove Theorem~\ref{thm:tight-bound}.
       \begin{proof}[Proof of Theorem~\ref{thm:tight-bound}]
       	  Assume there exists a wide-sense $t$-frameproof code $\mathcal{C} = \{\mathbf{c}^1,\mathbf{c}^2,\dots,\mathbf{c}^{n+1}\}\subseteq Q^n$ with $t \geq 3$
       	  and $2 \leq n < \frac{15+\sqrt{33}}{24}(t-1)^2$.   Let $i\in[n+1]$.
       	  By Theorem~\ref{thm:connection-cover-free-wFP}, the family $\overline{\mathcal{X}_i} \subseteq 2^{[n]}$ is a  $(t-1)$-cover-free family of size $n$. Then $\overline{\mathcal{X}_i}$ gives an $n\times n$ $(t-1)$-disjunct matrix with $1<n < \frac{15+\sqrt{33}}{24}(t-1)^2$.
       	  By Theorem~\ref{thm:disjunct-n*n}, this matrix is a permutation matrix, and hence $overline{\mathcal{X}_i} = \{ \{1\},\{2\},\dots,\{n\} \}$. However, $\overline{\mathcal{X}_i}$ is $t$-wise intersecting by Lemma~\ref{inter}, which leads to a contradiction.
       \end{proof}

\section{Proof of Theorem~\ref{thm:tight-bound-iff}}\label{section:pf-of-tight-bound-iff}
In this section we prove Theorem~\ref{thm:tight-bound-iff}. We first use Lemma~\ref{lemma:key} to prove a stability result for $t$-cover-free families of size $n-1$, which extends Theorem~\ref{thm:disjunct-n*n} and may be of independent interest.

    \begin{theorem}\label{thm:n*(n-1)}
    	 Let $M$ be an $n\times (n-1)$ $t$-disjunct matrix with $2<n < \frac{15+\sqrt{33}}{24}t^2$. Then by permuting the rows of $M$ we have that the first $n-1$ rows form a permutation matrix of degree $n-1$ and the last row is arbitrary.
    \end{theorem}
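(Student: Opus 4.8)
The plan is to work with the equivalent reformulation: $M$ admits a \emph{system of distinct private rows}, i.e.\ an injection $\sigma$ from the $n-1$ columns to the $n$ rows such that $\sigma(\mathbf{c})$ is a private row of $\mathbf{c}$ (a row in which only $\mathbf{c}$ has a $1$). Given such a $\sigma$, the submatrix of $M$ on the $n-1$ rows $\sigma(\mathbf{c}_1),\dots,\sigma(\mathbf{c}_{n-1})$ is a permutation matrix of degree $n-1$ while the one remaining row is unconstrained; conversely, the decomposition in the statement yields such a $\sigma$. I would prove the existence of $\sigma$ by induction on $n$. The base case $n=3$ is immediate: a $3\times 2$ $t$-disjunct matrix is just a pair of distinct incomparable columns $A,B$, so there is a row with $A=1,B=0$ and a (necessarily different) row with $A=0,B=1$, and these two rows are private to $A$ and to $B$ respectively.

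For the inductive step $n\ge 4$, Lemma~\ref{lemma:key} applies (the matrix has $n-1$ columns and $n<\frac{15+\sqrt{33}}{24}t^2$) and produces an isolated column; after relabelling, call it column $1$, say $\mathbf{u}$, with private row $1$. Deleting column $1$ and row $1$ gives an $(n-1)\times(n-2)$ matrix $M_1$, which is still $t$-disjunct: deleting a column cannot destroy $t$-disjunctness, and since row $1$ was private to $\mathbf{u}$, every remaining column is zero in row $1$, so deleting that row does not change any column as a set. By the induction hypothesis, after permuting rows $2,\dots,n$ of $M$, rows $2,\dots,n-1$ form a permutation matrix of degree $n-2$ on columns $2,\dots,n-1$ with row $n$ arbitrary; write $\pi$ for the associated bijection of $\{2,\dots,n-1\}$ (the unique $1$ of column $j$ among rows $2,\dots,n-1$ lies in row $\pi(j)$), let $a_k$ denote the entry of column $1$ in row $k$, and $b_j$ the entry of column $j$ in row $n$.

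The heart of the argument is to control the set of ``collisions'' $B:=\{\,j\in\{2,\dots,n-1\}: a_{\pi(j)}=1\,\}$, that is, the permutation rows of $M_1$ that $\mathbf{u}$ also meets. If $j\in B$ and $b_j=0$, then column $j=\{\pi(j)\}\subseteq\mathbf{u}$, contradicting $t$-disjunctness; hence every $j\in B$ has column $j=\{\pi(j),n\}$. If $|B|\ge 2$, choosing distinct $j_1,j_2\in B$ gives $\{\pi(j_1),n\}\subseteq\mathbf{u}\cup(\text{column }j_2)$, a containment in two columns, impossible since our hypothesis on $n$ and $t$ forces $t\ge 2$. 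So $|B|\le 1$. If $B=\emptyset$, every row $\pi(j)$ is private to column $j$ in $M$, so, together with row $1$ for $\mathbf{u}$, the rows $\{1,\dots,n-1\}$ give the desired permutation submatrix with row $n$ free. If $B=\{j^*\}$, one checks that $a_n=1$ would give $\{\pi(j^*),n\}\subseteq\mathbf{u}$, and $b_j=1$ for some $j\ne j^*$ would give $\{\pi(j^*),n\}\subseteq\mathbf{u}\cup(\text{column }j)$; both are impossible, so $a_n=0$ and $b_j=0$ for all $j\ne j^*$, whence row $n$ is private to column $j^*$. Routing column $j^*$ to row $n$ instead of to $\pi(j^*)$ then produces a permutation submatrix on $\{1,n\}\cup\{\pi(j):j\ne j^*\}$ with $\pi(j^*)$ as the free row. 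I expect this collision analysis — showing the isolated column obstructs at most one of the inductively produced private rows, and that the offending column can always be re-routed through the last coordinate — to be the only genuinely new point; the remainder is bookkeeping that runs parallel to the proof of Theorem~\ref{thm:disjunct-n*n}.
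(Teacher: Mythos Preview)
Your proof is correct and follows essentially the same approach as the paper: induction on $n$, use Lemma~\ref{lemma:key} to peel off an isolated column, apply the induction hypothesis to the $(n-1)\times(n-2)$ residual matrix, and then analyse the entries of the isolated column against the inductively obtained permutation block. Your reformulation via a ``system of distinct private rows'' and your ordering of the collision analysis (first bounding $|B|\le 1$ by a two-column cover, then deducing $a_n=0$ and $b_j=0$ for $j\ne j^*$) differ only cosmetically from the paper's Case~1/Case~2 split, which first fixes a single collision $i_0$, derives $a_n=0$, and then proves the two claims $b_j=0$ for $j\ne j_0$ and $a_i=0$ for $i\ne i_0$ in that order; the underlying containment arguments are identical.
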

    \begin{proof}
    	Let $t\geq 2$ be fixed.  We prove by induction on $n$. The base case $n=3$ is trivial.
    	
    	Now suppose that the result holds for every $k\times (k-1)$ $t$-disjunct matrix with $2<k<n < \frac{15+\sqrt{33}}{24}t^2$.
    	Let $M$ be an $n\times (n-1)$ $t$-disjunct matrix where $3<n < \frac{15+\sqrt{33}}{24}t^2$.
    	By Lemma~\ref{lemma:key}, there exists some isolated column $\mathbf{u}$ in $M$. Without loss of generality, we can permute the rows and columns, and assume that $\mathbf{u}$ is the first column with the first row incident to $\mathbf{u}$ but not to any other column. Then the matrix $M$ is of the form
    	\begin{displaymath}       	
    	M =
    	\left( \begin{array}{c|ccc}
    	1   & 0 & \cdots  & 0  \\ \hline
    	u_2   &     &     &   \\
    	\vdots   &     & M_1  &   \\
    	u_{n}   &     &     &   \\
    	\end{array} \right).
    	\end{displaymath}
    	Clearly $M_1$ is an $(n-1)\times (n-2)$ $t$-disjunct matrix. By the induction hypothesis we see that we can permute the rows such that the first $n-2$ rows of $M_1$ form a permutation matrix of degree $n-2$. Then $M$ has the form
    	\begin{displaymath}       	
    	M =
    	\left( \begin{array}{c|ccc}
    	1   & 0 & \cdots  & 0  \\ \hline
    	a_2   &     &     &   \\
    	\vdots   &     & M_2  &   \\
    	a_{n-1}   &     &     &   \\ \hline
    	a_{n}   &  b_2   & \cdots & b_{n-1}   \\
    	\end{array} \right),
    	\end{displaymath}
    	where $M_2$ is an $(n-2)\times (n-2)$ permutation matrix.

    	\textbf{Case 1.} Suppose $a_2=\dots=a_{n-1}=0$. Since $M_2$ is a permutation matrix of degree $n-2$, it is clear that the first $n-1$ rows of $M$ form a permutation matrix of degree $n-1$.
    	
    	\textbf{Case 2.} Suppose that there exists some $i_0 \in \{2,3,\dots,n-1\}$ such that $a_{i_0}=1$. Since $M_2$ is a permutation matrix, we can find a column $\mathbf{v}$ of $M_2$ such that $\mathbf{v}$ has entry $1$ in the $(i_0-1)$-th row and  $0$'s in other rows. Let $\mathbf{v}'$ be the column of $M$ corresponding to the column $\mathbf{v}$ of $M_2$, and assume $\mathbf{v}'$ is the $j_0$-th column of $M$.  Since $M$ is $t$-disjunct, the column $\mathbf{v}'$ is not contained in $\mathbf{u}$. Hence we have $a_n=0$ and $b_{j_0}=1$.
    	\begin{claim}
    		$b_j=0$ for all $j\in \{2,3,\dots,n-1\}\setminus\{j_0\}$.
    	\end{claim}
    \begin{proof}
    	   Indeed, if there exists some $j_1\in \{2,3,\dots,n-1\}\setminus\{j_0\}$ such that $b_{j_1}=1$, then $\mathbf{v}'$ is contained in the boolean sum of $\mathbf{u}$ and the $j_1$-th column of $M$, contradicting to the assumption that $M$ is $t$-disjunct. Thus $b_j=0$ holds for every $j\in \{2,3,\dots,n-1\}\setminus\{j_0\}$.
    	\end{proof}
       \begin{claim}
       	$a_i=0$ for all $i\in \{2,3,\dots,n-1\}\setminus\{i_0\}$.
       \end{claim}
    \begin{proof}
   	Assume there exists some $i_1\in \{2,3,\dots,n-1\}\setminus\{i_0\}$ such that $a_{i_1}=1$.
   	Since $M_2$ is a permutation matrix, we can find a column $\mathbf{w}\neq \mathbf{v}$ of $M_2$ such that $\mathbf{w}$ has entry $1$ in the $(i_1-1)$-th row and  $0$'s in other rows.
   	Let $\mathbf{w}'$ be the column of $M$ corresponding to the column $\mathbf{w}$ of $M_2$. Note that $a_n=0$ and $b_j=0$ for all $j\neq j_0$.
   	 Then $\mathbf{w}'$ is contained in  $\mathbf{u}$, contradicting to the assumption that $M$ is $t$-disjunct. Thus we have $a_i=0$ for every $i\in \{2,3,\dots,n-1\}\setminus\{i_0\}$.
   \end{proof}
By exchanging the $i_0$-th row and the last row of $M$, it is clear that the first $n-1$ rows become a permutation matrix of degree $n-1$, which completes the proof.
    \end{proof}

  Now we are in a position to prove Theorem~\ref{thm:tight-bound-iff}.
    \begin{proof}[Proof of Theorem~\ref{thm:tight-bound-iff}]
    	It is clear that given a code $\mathcal{C}\subseteq Q^n$, if its representation matrix  in standard form is a permutation matrix of degree $n$, then this code is wide-sense $t$-frameproof. Conversely, suppose now that $\mathcal{C} = \{\mathbf{c}^1,\mathbf{c}^2,\dots,\mathbf{c}^{n}\}\subseteq Q^n$ is a wide-sense $t$-frameproof code of size $n$, and suppose $M\in Q^{n\times n}$ is its representation matrix in standard form.
    	
    	 Let $i\in [n]$.
    	By Theorem~\ref{thm:connection-cover-free-wFP}, the family $\overline{\mathcal{X}_{i}} \subseteq 2^{[n]}$ is a  $(t-1)$-cover-free family of size $n-1$. Then the incidence matrix $N$ of $\overline{\mathcal{X}_{i}}$ is an $n\times (n-1)$ $(t-1)$-disjunct matrix with $3\leq n < \frac{15+\sqrt{33}}{24}(t-1)^2$.
    	By Theorem~\ref{thm:n*(n-1)}, we can permute the rows of $N$ such that the first $n-1$ rows form a permutation matrix of degree $n-1$. Without loss of generality, by permuting rows and columns, we can assume that $N$ is of the form
    \begin{displaymath}
    N =
    	\left( \begin{array}{ccc}	
    		    &  &      \\
    		   &  I_{n-1}   &        \\
    		   &     &      \\
    		\xi_1    &   \cdots  &  \xi_{n-1} \\
    	\end{array} \right),
    \end{displaymath}
    	where $I_{n-1}$ is the identity matrix of degree $n-1$. Since $\overline{\mathcal{X}_{i}}$ is $t$-wise intersecting by Lemma~\ref{inter}, it is easy to see that $\xi_1=\dots=\xi_{n-1}=1$.
    	Hence the incidence matrix of $\mathcal{X}_i$ is of the form
    	\begin{displaymath}
    		\left( \begin{array}{cccc}
    		0   & 1 & \cdots  & 1  \\
    		1   &  0   &   \cdots  &  1 \\
    		\vdots   & \vdots    &  \ddots & \vdots  \\
    		1   &   1  & \cdots    & 0  \\
    		0   &   0  & \cdots    & 0  \\
    	\end{array} \right)_{n\times (n-1)}.
    	\end{displaymath}
    	And therefore by permuting rows and columns the representation matrix $M$ of $\mathcal{C}$ has the form
    	\begin{displaymath}       	
    	M =
    	\left( \begin{array}{c|cccc}
    	a_1   & d_1 & a_1 & \cdots  & a_1  \\
    	a_2   &  a_2  & d_2  &   \cdots  & a_2  \\
    	\vdots   & \vdots & \vdots   & \ddots  &  \vdots \\
    	a_{n-1}   & a_{n-1}  & a_{n-1}  &   \cdots  &  d_{n-1} \\ \hline
    	a_{n}   &   b_1  & b_2 & \cdots    & b_{n-1}  \\
    	\end{array} \right),
    	\end{displaymath}
    	where the first column corresponds to the codeword $\mathbf{c}^i$, and we have
    	 $a_j\neq d_j$  and $a_n\neq b_j$ for each $j\in [n-1]$. Note that $M$ is in standard form and $n\geq 3$, we see that $a_1=\dots=a_{n-1}=0$ and $d_1=\dots=d_{n-1}=1$. Moreover, since $\mathcal{C}$ is wide-sense $t$-frameproof, it follows from Definition~\ref{equiv-defn} that for any
    	$\{j_1,j_2,\dots,j_s \} \subseteq [n-1]$ with $s\leq t$ we have $a_n\neq b_{j_1}=\dots=b_{j_s}$, so $a_n\neq b_1=\dots=b_{n-1}$. The assumption that $M$ is in standard form implies $a_n=1$ and $b_1=\dots=b_{n-1}=0$. Therefore $M$ is a permutation matrix, as desired.
     \end{proof}

\section{Concluding remarks}\label{section:conclusion}
 In this paper, we mainly investigate upper bounds for wide-sense frameproof codes.

 	 For wide-sense $2$-frameproof codes, using results from Sperner theory we prove a better upper bound, which significantly improves a result of Zhou and Zhou~\cite{Zhou-Zhou} when $n$ is large. Our result shows that the maximum size of a wide-sense $2$-frameproof code $\mathcal{C}\subseteq Q^n$ is much smaller than $\binom{n}{\lfloor \frac{n-1}{2}\rfloor}$ for large $n$. However, our bound is still $O(\binom{n}{\lfloor \frac{n-1}{2}\rfloor})$.  It would be of interest to further exponentially improve the upper bound. We strongly believe that the following is true.
 	\begin{conjecture}\label{conj:2-wFP}
 		Let $\mathcal{C}\subseteq Q^n$ be a wide-sense $2$-frameproof code of size $m$. For large $n$ we have
 		\begin{equation*}
 		m =
 		o\left( \binom{n}{\lfloor\frac{n-1}{2}\rfloor} \right).
 		\end{equation*}
 	\end{conjecture}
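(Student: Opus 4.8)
We close by sketching a possible line of attack on Conjecture~\ref{conj:2-wFP}. The plan is to work entirely with coincidence families. By Theorem~\ref{thm-panoui}, a code $\mathcal{C}$ of size $m$ is wide-sense $2$-frameproof if and only if every coincidence family $\mathcal{X}_i$ is a non-$2$-covering Sperner family of size $m-1$; equivalently, every $\overline{\mathcal{X}_i}$ is an intersecting antichain of size $m-1$. A single such family can already have size $\binom{n}{\lfloor\frac{n-1}{2}\rfloor}$ --- this is exactly Milner's bound on intersecting antichains, attained by the layer $\binom{[n]}{\lfloor\frac{n-1}{2}\rfloor}$ itself --- so an $o$-type improvement cannot be read off from any single family. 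It must exploit the compatibility conditions binding the $m$ families together, namely Lemma~\ref{lemma:symmetric-diff}, $I(i,j)\cap I(i,k)\subseteq I(j,k)\subseteq\overline{I(i,j)\Delta I(i,k)}$, together with the fact that all $m$ of the families attain the near-extremal size $m-1$ at once.

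I would argue by contradiction: fix $\alpha>0$ and suppose $m\ge\alpha\binom{n}{\lfloor\frac{n-1}{2}\rfloor}$ for infinitely many $n$. A first, soft step is a robustness statement: by the LYM inequality and the concentration of binomial coefficients, any antichain of size at least $\alpha\binom{n}{\lfloor\frac{n-1}{2}\rfloor}$ has all but a $\beta$-fraction of its members of size within $C\sqrt{n}$ of $n/2$, where $\beta\to 0$ as $C\to\infty$; apply this to $\mathcal{X}_1$ and to every $\mathcal{X}_j$. Feeding this size information into Lemma~\ref{lemma:symmetric-diff}, for almost every pair $j,k$ the sandwich $|I(1,j)\cap I(1,k)|\le|I(j,k)|\le n-|I(1,j)\Delta I(1,k)|$ has all three sets in the central band, which pins $|I(1,j)\cap I(1,k)|$ near $n/4$ and $|I(1,j)\Delta I(1,k)|$ near $n/2$ for almost all of the $\binom{m-1}{2}$ pairs in $\mathcal{X}_1$. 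The goal is then to show that this level of uniform pairwise correlation is incompatible with $\mathcal{X}_1$ being an antichain of size $\alpha\binom{n}{\lfloor\frac{n-1}{2}\rfloor}$: one route is a Kleitman-type diameter bound, applied to $\mathcal{X}_1$ or to a sub-family on which the correlation is even tighter; another is to double-count the set of distinct coincidence sets $\{I(i,j):i\ne j\}$, of which there are at most $\binom{m}{2}$, using that each single $\mathcal{X}_i$ already needs $m-1$ of them while the relation ``$\mathbf{c}^i$ and $\mathbf{c}^j$ agree exactly on $S$'' severely limits how often a given $S$ can recur as $i$ varies.

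The hard part --- and the reason this is stated only as a conjecture --- is extracting a genuine contradiction from the cross-relations. At the crude $o(n)$ scale the full middle layer $\binom{[n]}{\lfloor\frac{n-1}{2}\rfloor}$ is itself consistent with ``almost all pairwise intersections $\approx n/4$ and symmetric differences $\approx n/2$'', so the size/band argument above is not by itself decisive; moreover an LYM-deficit approach in the spirit of Proposition~\ref{prop:sperner} --- locating atypically small or large sets in some $\mathcal{X}_j$ --- can at best shave a factor $1-2^{-\Omega(n)}$ and cannot by itself reach $o(1)$. What seems to be required is either a refinement of the binomial estimates to the next order of accuracy, or a stability version of Milner's theorem controlling the \emph{shape} of a near-extremal intersecting antichain rather than just its dominant layer, together with an argument that these shapes cannot be realised simultaneously by all $m$ indices. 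I would expect a successful proof to pair such a sharp stability input with an averaging over $i$ that amplifies a small per-family deficit into an exponential one --- morally the mechanism underlying the exponential improvements obtained here for $t\ge 3$ via iterating the deletion Lemma~\ref{lemma:disjunct-delete} --- but carrying this out in the far tighter $t=2$ regime appears to need a genuinely new idea.
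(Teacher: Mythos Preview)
The statement you are addressing is Conjecture~\ref{conj:2-wFP}, and the paper does \emph{not} prove it; it is posed as an open problem in the concluding remarks, with the authors explicitly noting that their bound is still $O\bigl(\binom{n}{\lfloor (n-1)/2\rfloor}\bigr)$ and that an exponential improvement would be of interest. So there is no ``paper's own proof'' to compare against.

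Your write-up is consistent with this: you present a heuristic line of attack, not a proof, and you yourself flag the essential gap --- that the size/band argument and the LYM-deficit argument in the style of Proposition~\ref{prop:sperner} cannot by themselves reach $o(1)$, and that a stability version of Milner's theorem or some new structural input seems required. That diagnosis is accurate. The one place I would push back slightly is the analogy in your last paragraph: the exponential gains for $t\ge 3$ in the paper come from the cover-free bound (Theorem~\ref{thm-furedi} via Theorem~\ref{thm:connection-cover-free-wFP}), not from iterating Lemma~\ref{lemma:disjunct-delete}, so the ``amplification by averaging over $i$'' mechanism you invoke does not have a direct precedent in the $t\ge 3$ arguments here. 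Otherwise your sketch is a reasonable outline of where the difficulty lies, and it correctly identifies that any proof must exploit the compatibility constraints of Lemma~\ref{lemma:symmetric-diff} across \emph{all} indices simultaneously rather than bounding a single $\mathcal{X}_i$.
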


 	 For wide-sense $t$-frameproof codes, we establish a relationship between wide-sense $t$-frameproof codes and cover-free families. Using the bound for cover-free families we  provide a general upper bound. In particular, in the binary case our result improves the  best known  upper bound for binary $t$-frameproof codes given by Shangguan et al.~\cite{Shangguan2017}. However, the conditions for these cover-free families in the relationship are not fully explored. It would be interesting to further improve upon this result.
 	
 	 We also prove a tight upper bound for  wide-sense $t$-frameproof codes when the code length is at most $\frac{15+\sqrt{33}}{24}(t-1)^2$, which generalizes and improves a result of Ge et al.~\cite{Ge-Shangguan-Wang2019}. In Section~\ref{subsection-cover-free}, we define
 	$N^*(t)$ as the minimum $n$ such that there exists a $t$-cover-free family $\mathcal{F} \subseteq 2^{[n]}$ with $|\mathcal{F}| > n$.
 	Similarly, we can define $N_q(t)$ as the minimum $n\geq 2$ such that there exists a wide-sense $t$-frameproof code $\mathcal{C}\subseteq Q^n$ with $|\mathcal{C}| > n$.
 	Our result in Theorem~\ref{thm:tight-bound} shows that $N_q(t)\geq \lceil\frac{15+\sqrt{33}}{24}(t-1)^2\rceil\geq 4$ for $t\geq 3$. In \cite{Ge-Shangguan-Wang2019}, it was shown that for $t\geq 3$ it holds that $N^*(t-2)\leq N_2(t)\leq N^*(t)$. We can extend it to the following.
 	  \begin{proposition}\label{prop-relation-N(t)}
 	  	 For each $t\geq 3$, $N^*(t-2)+2\leq N_q(t) \leq N_2(t)\leq N^*(t)$.
 	  \end{proposition}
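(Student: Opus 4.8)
The plan is to prove the chain of inequalities $N^*(t-2)+2\leq N_q(t)\leq N_2(t)\leq N^*(t)$ by exhibiting explicit constructions in each direction, using the correspondence from Theorem~\ref{thm:connection-cover-free-wFP} as the main bridge between wide-sense frameproof codes and cover-free families. The middle inequality $N_q(t)\leq N_2(t)$ is immediate: a binary wide-sense $t$-frameproof code of length $n$ with more than $n$ codewords is in particular a $q$-ary wide-sense $t$-frameproof code (just regard $\{0,1\}\subseteq Q$), so any $n$ that witnesses $N_2(t)$ also witnesses $N_q(t)$, whence $N_q(t)\leq N_2(t)$.

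For the right-hand inequality $N_2(t)\leq N^*(t)$, I would take $n=N^*(t)$ and a $t$-cover-free family $\mathcal{F}\subseteq 2^{[n]}$ with $|\mathcal{F}|=n+1$. The goal is to build a binary wide-sense $t$-frameproof code of length $n$ with $n+1$ codewords. The natural move is to run the correspondence of Theorem~\ref{thm:connection-cover-free-wFP} in reverse: pick one codeword (say $\mathbf{c}^{n+1}=\mathbf{0}$), and let the remaining $n$ codewords be the characteristic vectors of the complements of the members of $\mathcal{F}$, so that $\overline{\mathcal{X}_{n+1}}$ recovers $\mathcal{F}$. One then has to check that $\overline{\mathcal{X}_i}$ is $(t-1)$-cover-free of size $n$ for \emph{every} $i$, not just $i=n+1$; this is where a small adjustment to $\mathcal{F}$ may be needed (for instance replacing $\mathcal{F}$ by an extremal one, or verifying that the cover-free property is symmetric enough). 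Since this exact argument was already carried out in \cite{Ge-Shangguan-Wang2019} to get $N_2(t)\leq N^*(t)$, I would cite or adapt their construction directly.

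For the left-hand inequality $N^*(t-2)+2\leq N_q(t)$, the contrapositive is cleanest: I want to show that if $n<N^*(t-2)+2$, i.e. $n-2<N^*(t-2)$, then every wide-sense $t$-frameproof code $\mathcal{C}\subseteq Q^n$ has $|\mathcal{C}|\leq n$. So suppose $|\mathcal{C}|=m\geq n+1$. Fix any $i\in[m]$; by Theorem~\ref{thm:connection-cover-free-wFP}, $\overline{\mathcal{X}_i}\subseteq 2^{[n]}$ is a $(t-1)$-cover-free family of size $m-1\geq n$, and by Lemma~\ref{inter} it is $t$-wise intersecting. I would then delete two coordinates: intersect all members of $\overline{\mathcal{X}_i}$ with $[n]\setminus\{n-1,n\}$ (equivalently restrict the disjunct matrix to $n-2$ rows). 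The $t$-wise intersecting property should let me argue that deleting two rows from the $n\times m$ disjunct matrix drops the disjunctness by at most $2$ while destroying at most one column (or keeping all columns distinct), leaving an $(n-2)\times(m-1)$, hence an $(n-2)\times n$, $(t-3)$... — here I must be careful about the exact bookkeeping. The cleanest version: restricting a $(t-1)$-cover-free family on $[n]$ to $[n]\setminus\{x,y\}$ yields a $(t-3)$-cover-free family, but since we only deleted $2$ ground elements and $t-1\geq 2$, a matching-type argument (as in Lemma~\ref{lemma:disjunct-delete}, applied twice to singleton-weight columns or via a direct pigeonhole) gives a $(t-2)$-cover-free family on $n-2$ points with more than $n-2$ members, contradicting $n-2<N^*(t-2)$.

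The main obstacle will be the last step: making precise how much the cover-free parameter degrades when passing from $[n]$ to a two-element-smaller ground set, and ensuring the resulting family still has strictly more than $n-2$ members. The $t$-wise intersecting property from Lemma~\ref{inter} is the key extra leverage here that is not available for general cover-free families, and the argument in \cite{Ge-Shangguan-Wang2019} establishing $N^*(t-2)\leq N_2(t)$ presumably handles exactly this; I would follow their deletion argument, verifying that it goes through verbatim for arbitrary alphabet $q$ since the correspondence in Theorem~\ref{thm:connection-cover-free-wFP} is itself $q$-independent. Once that step is in place, the three inequalities assemble immediately into the claimed chain.
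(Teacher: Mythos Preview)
Your handling of $N_q(t)\leq N_2(t)$ and $N_2(t)\leq N^*(t)$ is fine and matches the paper (which also just cites \cite{Ge-Shangguan-Wang2019} for $N_2(t)\leq N^*(t)$). The gap is in the left-hand inequality.

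For $N^*(t-2)+2\leq N_q(t)$ you never settle on a working deletion. Restricting the ground set to $[n]\setminus\{x,y\}$ does \emph{not} in general preserve any cover-free parameter: a containment $A_0'\subseteq A_1'\cup\dots\cup A_r'$ in the restriction need not lift to one in $\mathcal{F}$, so you cannot say the restriction is $(t-3)$-cover-free, let alone $(t-2)$-cover-free. Applying Lemma~\ref{lemma:disjunct-delete} twice is also wrong for the target: two applications drop the disjunctness parameter by $2$, landing you at $(t-3)$-disjunct, not $(t-2)$. Neither route reaches a $(t-2)$-cover-free family on $n-2$ points with more than $n-2$ members.

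The paper's argument is a single clean step you are circling around but not finding. With $n=N_q(t)$ and $|\mathcal{C}|=n+1$, the family $\overline{\mathcal{X}_i}$ is $(t-1)$-cover-free of size $n$, so its incidence matrix $M$ is $n\times n$ and $(t-1)$-disjunct. The $t$-wise intersecting property (Lemma~\ref{inter}) is used not to control what happens after deletion, but to guarantee \emph{before} deletion that $M$ has some column $\mathbf{u}$ of weight at least $2$: if every column had weight $1$, any two columns would be disjoint singletons (distinctness forces their $1$'s into different rows), contradicting intersecting. Now apply Lemma~\ref{lemma:disjunct-delete} \emph{once} to $\mathbf{u}$: you get an $(n-|\mathbf{u}|)\times(n-1)$ matrix that is $(t-2)$-disjunct, with $n-|\mathbf{u}|\leq n-2<n-1$. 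Hence $N^*(t-2)\leq n-|\mathbf{u}|\leq n-2=N_q(t)-2$. The whole point is that one deletion of a weight-$\geq 2$ column simultaneously removes at least two rows and exactly one column while dropping disjunctness by exactly one.
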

      \begin{proof}
      	 In \cite{Ge-Shangguan-Wang2019}, it was proved that $N^*(t-2)\leq N_2(t)\leq N^*(t)$. Since a binary wide-sense $t$-frameproof code is also a $q$-ary wide-sense $t$-frameproof code, we have $N_q(t) \leq N_2(t)$. It suffices to show $N^*(t-2)+2\leq N_q(t)$. Let $\mathcal{C} = \{\mathbf{c}^1,\mathbf{c}^2,\dots,\mathbf{c}^{n+1}\}\subseteq Q^n$ be a wide-sense $t$-frameproof code of size $n+1$ where $n=N_q(t)$. For each $i\in [n+1]$, by Theorem~\ref{thm:connection-cover-free-wFP}, $\overline{\mathcal{X}_i}\subseteq 2^{[n]}$ is a  $(t-1)$-cover-free family of size $n$. We claim that the incidence matrix $M$ of $\overline{\mathcal{X}_i}$ has a column of weight at least $2$. Indeed, otherwise there are two columns $\mathbf{u}, \mathbf{v}$ of $M$ with weight $|\mathbf{u}|=|\mathbf{v}|=1$ since $n=N_q(t)\geq 2$. Since the matrix $M$ is $(t-1)$-disjunct, the $1$'s in $\mathbf{u},\mathbf{v}$ are not in the same row. However, $\overline{\mathcal{X}_i}$ is $t$-wise intersecting by Lemma~\ref{inter}, which leads to a contradiction.
      	 Thus we can find a column $\mathbf{u}$ of weight at least $2$. By Lemma~\ref{lemma:disjunct-delete}, deleting $\mathbf{u}$ and all rows intersecting it yields an $(n-|\mathbf{u}|)\times(n-1)$  matrix which is $(t-2)$-disjunct. Since $n-|\mathbf{u}|<n-1$, we see that $N^*(t-2)\leq n-|\mathbf{u}|\leq N_q(t)-2$.
      \end{proof}
       In \cite{Ge-Shangguan-Wang2019}, the authors conjectured that $\lim_{t \rightarrow \infty} N_2(t)/t^2 =1$ which is equivalent to the weaker form of Conjecture~\ref{conj:erdos-frankl-furedi}. Our following conjecture is also equivalent to the weaker form of Conjecture~\ref{conj:erdos-frankl-furedi} by Proposition~\ref{prop-relation-N(t)}.
       \begin{conjecture}\label{conj-N_q(t)}
       	It holds that $\lim_{t \rightarrow \infty} N_q(t)/t^2 =1$.
       \end{conjecture}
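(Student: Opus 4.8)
The plan is to reduce Conjecture~\ref{conj-N_q(t)} to the weak form of the Erd\H{o}s--Frankl--F\"{u}redi Conjecture~\ref{conj:erdos-frankl-furedi} (i.e.\ $N^*(t)/t^2\to 1$) via the sandwich of Proposition~\ref{prop-relation-N(t)}; in fact the two statements are equivalent, so the genuine task is an improved lower bound for $t$-cover-free families.

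First I would carry out the (routine) equivalence bookkeeping. Proposition~\ref{prop-relation-N(t)} gives, for every $t\ge 3$,
\[
\frac{N^*(t-2)+2}{t^2}\le\frac{N_q(t)}{t^2}\le\frac{N_2(t)}{t^2}\le\frac{N^*(t)}{t^2}.
\]
Since $N^*(t-2)/t^2=\bigl(N^*(t-2)/(t-2)^2\bigr)\cdot\bigl((t-2)/t\bigr)^2$ and $\bigl((t-2)/t\bigr)^2\to 1$, the left-hand side and $N^*(t)/t^2$ have the same limiting behaviour up to $o(1)$. Hence if $N^*(t)/t^2\to 1$ the squeeze yields $N_q(t)/t^2\to 1$; conversely, $N_q(t)\le N^*(t)$ forces $\liminf N^*(t)/t^2\ge 1$ and $N^*(t-2)\le N_q(t)-2$ forces $\limsup N^*(t)/t^2\le 1$, so $N_q(t)/t^2\to 1$ implies $N^*(t)/t^2\to 1$. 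Recalling that affine planes of prime-power order already give $N^*(t)\le(1+o(1))t^2$ unconditionally (Section~\ref{subsection-cover-free}), the entire content of the conjecture is the matching lower bound $N^*(t)\ge(1-o(1))t^2$, whereas Theorem~\ref{thm-shangguan-cover-free} currently only supplies $N^*(t)\ge\frac{15+\sqrt{33}}{24}t^2\approx 0.874\,t^2$.

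The strategy for that remaining step would be to sharpen the double counting behind Theorem~\ref{thm-shangguan-cover-free} and Lemma~\ref{lemma:key}: in a $t$-disjunct matrix on $n$ rows with no isolated column one has $\binom{n}{2}\ge\sum_{\mathbf{u}}|P(\mathbf{u})|$, so it would suffice to prove that every column carries at least $(1-o(1))t^2/2$ private $2$-subsets. The loss in the present argument is concentrated in columns of weight $t+s$ with $s$ somewhat larger than $\frac{2t}{3}$ (up to about $\frac{3+\sqrt{33}}{12}t$), where Lemma~\ref{lemma:shangguan-matching} only guarantees $|N(\mathbf{u})|\le\binom{2s-1}{2}$ and hence $|P(\mathbf{u})|$ can drop to roughly $\frac{15+\sqrt{33}}{48}t^2$ instead of $\sim t^2/2$. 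Closing this would require one of: a stronger structural description of how the non-private pairs of such a column must be distributed among the other columns (excluding the configuration that makes Lemma~\ref{lemma:shangguan-matching} tight); a global charging/averaging argument that sets the private-pair surplus of heavy columns against the deficit of light ones rather than bounding each column separately; or a more careful inductive peeling through Lemma~\ref{lemma:disjunct-delete} that keeps exact track of the surviving column count (in the spirit of Theorems~\ref{thm:disjunct-n*n} and~\ref{thm:n*(n-1)}).

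I expect this last step to be the real obstacle: it is precisely the long-standing Erd\H{o}s--Frankl--F\"{u}redi problem, and the constant $\frac{15+\sqrt{33}}{24}$ has so far resisted the natural refinements of the counting method, so pushing it all the way to $1$ plausibly demands a new ingredient --- for instance a polynomial- or eigenvalue-method input, or an exact extremal classification of $t$-cover-free families whose size is close to $n$ (extending the $n\times n$ and $n\times(n-1)$ results above to larger sizes). Everything else --- the reduction via Proposition~\ref{prop-relation-N(t)}, the affine-plane upper bound, and the limit manipulations --- is entirely routine, so a complete proof of the conjecture would stand or fall with this improved cover-free lower bound.
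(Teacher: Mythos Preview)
Your analysis is accurate, but you should be aware that there is no proof in the paper to compare against: the statement is presented as a \emph{conjecture}, not a theorem. The paper does exactly what you do in your first two paragraphs---it observes, via Proposition~\ref{prop-relation-N(t)}, that Conjecture~\ref{conj-N_q(t)} is equivalent to the weak form of Conjecture~\ref{conj:erdos-frankl-furedi}---and stops there. It offers no strategy for actually establishing $N^*(t)\ge(1-o(1))t^2$.

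Your equivalence argument (the squeeze plus the $(t-2)^2/t^2\to 1$ renormalisation, and the converse direction) is correct and slightly more explicit than the paper, which simply asserts the equivalence in one sentence. Your further discussion of where the $\tfrac{15+\sqrt{33}}{24}$ constant comes from and what would be needed to push it to $1$ is reasonable commentary, but it is speculation about an open problem rather than a proof; you yourself acknowledge this. So there is no gap to name beyond the one you already flag: the conjecture reduces to, and is equivalent to, the Erd\H{o}s--Frankl--F\"{u}redi problem, which remains open.
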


 	 Let $t\geq 3$. We denote by $N_q'(t)$ be the smallest $n\geq 3$ such that there exists a wide-sense $t$-frameproof code in $Q^n$ of size $n$ whose representation matrix in standard form is NOT a permutation matrix. Theorem~\ref{thm:tight-bound-iff} shows that $N_q'(t) \geq \lceil\frac{15+\sqrt{33}}{24}(t-1)^2\rceil \geq 4$.  We can prove the following relationship between $N_q(t)$ and $N_q'(t)$.
 	\begin{proposition}\label{prop-relation-N_q'(t)}
 		 For each $t\geq 3$ we have $N_q'(t)\leq N_q(t)$.
 	\end{proposition}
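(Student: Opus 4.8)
The plan is to begin with a code witnessing $n:=N_q(t)$, namely a wide-sense $t$-frameproof code $\mathcal{C}=\{\mathbf{c}^1,\dots,\mathbf{c}^{n+1}\}\subseteq Q^n$ of size $n+1$, and to extract from it a wide-sense $t$-frameproof code of size $n$ in $Q^n$ whose standard-form representation matrix is \emph{not} a permutation matrix of degree $n$; this yields $N_q'(t)\le n=N_q(t)$. Two facts will be used throughout. First, $t\ge 3$ forces $n\ge\lceil\frac{15+\sqrt{33}}{24}(t-1)^2\rceil\ge 4$ by Theorem~\ref{thm:tight-bound}. Second, every subcode of a wide-sense $t$-frameproof code is again wide-sense $t$-frameproof, since $\wdesc(X)\cap\mathcal{C}'\subseteq\wdesc(X)\cap\mathcal{C}=X$ whenever $X\subseteq\mathcal{C}'\subseteq\mathcal{C}$ and $|X|\le t$. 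I also use freely that permuting the symbols within each row, permuting rows, and permuting columns of a representation matrix all preserve both the wide-sense $t$-frameproof property and the property ``the standard form is a permutation matrix of degree $n$''.

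First I would delete $\mathbf{c}^{n+1}$, obtaining the wide-sense $t$-frameproof code $\mathcal{C}_1:=\mathcal{C}\setminus\{\mathbf{c}^{n+1}\}$ of size $n$ in $Q^n$. If the standard form of $\mathcal{C}_1$ is not a permutation matrix, we are done. Otherwise, after applying suitable symbol permutations to the rows and reordering rows and columns, we may assume $\mathbf{c}^k=\mathbf{e}_k$ for all $k\in[n]$, where $\mathbf{e}_k$ denotes the $k$-th standard basis vector of $Q^n$; write $\mathbf{v}:=\mathbf{c}^{n+1}=(v_1,\dots,v_n)$. Next I would consider the subcode $\mathcal{C}_2:=\mathcal{C}\setminus\{\mathbf{e}_1\}=\{\mathbf{e}_2,\dots,\mathbf{e}_n,\mathbf{v}\}$, again wide-sense $t$-frameproof of size $n$ in $Q^n$, with $n\times n$ representation matrix $N$ whose columns are ordered $\mathbf{e}_2,\dots,\mathbf{e}_n,\mathbf{v}$. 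A direct inspection shows that row $1$ of $N$ equals $(0,\dots,0,v_1)$, while for $2\le r\le n$ row $r$ has a single $1$ in the column indexed by $\mathbf{e}_r$, zeros in the columns indexed by $\mathbf{e}_i$ with $i\ne r$, and the entry $v_r$ in the column indexed by $\mathbf{v}$.

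The key step is a small multiplicity count: since $n\ge 4$, a row of $N$ can be turned into a row of a permutation matrix by a symbol permutation only if it consists of exactly two distinct symbols, one with multiplicity $n-1$ and one with multiplicity $1$. Examining the rows described above, this forces $v_1\ne 0$ and $v_r=0$ for every $r$ with $2\le r\le n$; moreover $v_1\ne 1$, because $\mathbf{v}\ne\mathbf{e}_1=\mathbf{c}^1$. Hence, if the standard form of $\mathcal{C}_2$ were a permutation matrix, we would have $\mathbf{v}=(v_1,0,\dots,0)$ with $v_1\notin\{0,1\}$. But then $X:=\{\mathbf{c}^1,\mathbf{c}^2\}=\{\mathbf{e}_1,\mathbf{e}_2\}$ satisfies $U(X)=\{3,4,\dots,n\}$ with common value $0$ on those coordinates, so $\wdesc(X)=\{\mathbf{y}\in Q^n:y_r=0\text{ for }3\le r\le n\}$ contains $\mathbf{v}$; since $\mathbf{v}\in\mathcal{C}$ and $\mathbf{v}\notin X$, this contradicts the defining property $\wdesc(X)\cap\mathcal{C}=X$ of the wide-sense $t$-frameproof code $\mathcal{C}$, valid here because $|X|=2\le t$. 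Therefore the standard form of $\mathcal{C}_2$ is not a permutation matrix, and $N_q'(t)\le n=N_q(t)$, as desired.

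I expect the only delicate point to be the multiplicity count above: one must verify carefully that a row which becomes a permutation-matrix row after a symbol permutation is forced to have multiplicity pattern $(n-1,1)$, and in particular that the case $v_r=1$ (which yields a row with multiplicities $n-2$ and $2$) is excluded. This is exactly where $n\ge 4$, hence the hypothesis $t\ge 3$, is essential: for $n=3$ such a row could still be standardized to a permutation-matrix row, and the argument would break down.
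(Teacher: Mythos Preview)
Your proof is correct. Both your argument and the paper's start from a wide-sense $t$-frameproof code $\mathcal{C}\subseteq Q^n$ of size $n+1$ with $n=N_q(t)\ge 4$ and examine size-$n$ subcodes, but the executions diverge. The paper argues by contradiction that \emph{every} size-$n$ subcode has permutation standard form; it first uses arbitrary subcodes to force the full matrix to be binary, and then observes that the first $n$ columns and the last $n$ columns each form a permutation matrix in standard form, whence the first and last columns coincide. You instead look at only two specific subcodes: you normalize so that $\mathcal{C}_1=\{\mathbf{e}_1,\dots,\mathbf{e}_n\}$, then analyze $\mathcal{C}_2=\{\mathbf{e}_2,\dots,\mathbf{e}_n,\mathbf{v}\}$ row by row via the multiplicity pattern $(n-1,1)$ to pin down $\mathbf{v}=(v_1,0,\dots,0)$ with $v_1\notin\{0,1\}$, and finish by invoking the wide-sense frameproof property directly through $\mathbf{v}\in\wdesc(\{\mathbf{e}_1,\mathbf{e}_2\})$. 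Your route sidesteps the binarity reduction at the cost of appealing to the definition of $\wdesc$ at the end, while the paper's route needs only distinctness of codewords once binarity is in hand; both are clean, and your version is arguably more self-contained.
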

 \begin{proof}
 	Suppose not, then for some $t\geq 3$, there exists a wide-sense $t$-frameproof code in $Q^n$ of size $n+1$ with $4\leq n= N_q(t)\leq N_q'(t)-1$. Let $M\in Q^{n\times (n+1)}$ be its representation matrix in standard form.
 		We claim that $M$ can be viewed as a binary matrix by symbol mapping. 
Otherwise, there exists some row of $M$  containing three different symbols, say $0,1,2$. Hence we can pick $n= N_q(t)>3$ columns of $M$ to form an $n\times n$ matrix $M'$ with some row containing $0,1,2$. Since $n= N_q(t)< N_q'(t)$, the standard form of $M'$ is a permutation matrix, leading to a contradiction. Thus $M$ is a binary matrix.
 		
 	Consider the submatrix $A$ formed by the first $n$ columns of $M$. Since $n<N_q'(t)$, the standard form of $A$ must be a permutation matrix. Note that $n= N_q(t)\geq 4$ and the binary matrix $M$ is in standard form. It is easy to see that $A$ is already in standard form. Hence the first $n$ columns of $M$ form a permutation matrix. Similarly, the last $n$ columns of $M$ also form a permutation matrix. It follows that the first and last columns of $M$ are identical, which is impossible.
 \end{proof}
   Combining Proposition~\ref{prop-relation-N_q'(t)} and Theorem~\ref{thm:tight-bound-iff} yields a new proof of Theorem~\ref{thm:tight-bound}.
    Our next conjecture (if true) implies Conjecture~\ref{conj-N_q(t)} {\color{black} since $N_q'(t)\leq N_q(t) \leq N^*(t) \leq (1+o(1))t^2$ for each $t\geq 3$.}
   \begin{conjecture}
   	It holds that $\lim_{t \rightarrow \infty} N_q'(t)/t^2 =1$.
   \end{conjecture}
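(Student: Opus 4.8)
The plan is to split the conjecture into its easy (upper) and hard (lower) halves. For the upper half, combining Proposition~\ref{prop-relation-N_q'(t)} with Proposition~\ref{prop-relation-N(t)} already gives $N_q'(t)\le N_q(t)\le N^*(t)$, and an affine plane of order the least prime power $\ge t+1$ gives $N^*(t)\le(1+o(1))t^2$; hence $\limsup_{t\to\infty}N_q'(t)/t^2\le 1$ unconditionally. So everything comes down to the matching lower bound $\liminf_{t\to\infty}N_q'(t)/t^2\ge 1$, i.e.\ to showing that for every $n<(1-o(1))t^2$, a wide-sense $t$-frameproof code of length $n$ and size $n$ must have a permutation matrix as its standard form. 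By Theorem~\ref{thm:connection-cover-free-wFP}, and exactly as in the proof of Theorem~\ref{thm:tight-bound-iff}, this is equivalent to a purely matrix-theoretic statement: for $n<(1-o(1))t^2$, every $n\times n$ $t$-disjunct matrix is a permutation matrix and every $n\times(n-1)$ $t$-disjunct matrix is, after permuting rows, a permutation matrix of degree $n-1$ together with one arbitrary extra row. Thus I would aim to prove quantitatively stronger versions of Theorem~\ref{thm:disjunct-n*n} and Theorem~\ref{thm:n*(n-1)}, valid up to $n<(1-o(1))t^2$ in place of $n<\tfrac{15+\sqrt{33}}{24}t^2$.

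Tracing the dependencies of those two theorems, both rest on Lemma~\ref{lemma:key} (existence of an isolated column when $w\ge n-1$), and the proof of Lemma~\ref{lemma:key} has two parts. The deletion/induction part, via Lemma~\ref{lemma:disjunct-delete}, reduces a large-weight column to Theorem~\ref{thm:shangguan-disjunct} and would carry over verbatim once that theorem is sharpened. The other part is a private-pair counting argument: for a column of weight $t+s$ with $s$ small, Lemma~\ref{lemma:shangguan-matching} lower-bounds $|P(\mathbf{u})|$, and then $\sum_{\mathbf{u}}|P(\mathbf{u})|\le\binom{n}{2}$ (the $P(\mathbf{u})$ being pairwise disjoint) forces $n$ large. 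This counting step is exactly where the constant $\tfrac{15+\sqrt{33}}{24}$ gets pinned down, so the plan needs an essentially optimal version of it --- equivalently, an improvement of the Shangguan--Ge bound of Theorem~\ref{thm-shangguan-cover-free} to $N^*(t)\ge(1-o(1))t^2$. Granting such an input, I would re-run Lemma~\ref{lemma:key} with the sharper threshold and then feed it into the inductions of Theorems~\ref{thm:disjunct-n*n} and~\ref{thm:n*(n-1)}: peel off an isolated column, apply the hypothesis to the $(n-1)\times(n-1)$ (resp.\ $(n-1)\times(n-2)$) principal submatrix, and argue that no extra $1$'s can appear. This would yield $N_q'(t)\ge(1-o(1))t^2$ and close the conjecture.

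The hard part is precisely this required improvement of the lower bound on $N^*(t)$: getting from $\tfrac{15+\sqrt{33}}{24}t^2$ to $(1-o(1))t^2$ is a weak form of the Erd\H{o}s--Frankl--F\"uredi Conjecture~\ref{conj:erdos-frankl-furedi}, a long-standing open problem, and the Shangguan--Ge constant appears to reflect the limit of the private-pair counting method; improving it seems to require a genuinely new bound on the number of non-private pairs in a disjunct matrix. Realistically, then, the conjecture should be attacked conditionally --- prove (a weak form of) Conjecture~\ref{conj:erdos-frankl-furedi}, and the machinery above upgrades it to the present conjecture, hence also to Conjecture~\ref{conj-N_q(t)}. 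A more optimistic route would be to look for structural arguments special to square and almost-square disjunct matrices that force near-permutation structure without passing through a sharp general bound on $N^*(t)$; whether such extra leverage is available is unclear to me, and that uncertainty is the crux of the difficulty.
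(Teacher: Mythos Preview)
The statement is a \emph{conjecture} that the paper leaves open; there is no proof in the paper to compare against. Immediately before stating it, the paper records only the upper-bound half, namely $N_q'(t)\le N_q(t)\le N^*(t)\le(1+o(1))t^2$ (via Propositions~\ref{prop-relation-N_q'(t)} and~\ref{prop-relation-N(t)} together with the affine-plane construction), which is exactly what you isolate as the easy direction.

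Your diagnosis of the hard direction is accurate and matches the paper's implicit view: the route through Theorems~\ref{thm:disjunct-n*n} and~\ref{thm:n*(n-1)} funnels through Lemma~\ref{lemma:key}, and the constant $\tfrac{15+\sqrt{33}}{24}$ there is determined by the private-pair counting of Lemma~\ref{lemma:shangguan-matching}. Pushing that threshold to $(1-o(1))t^2$ is at least as hard as the weak form of Conjecture~\ref{conj:erdos-frankl-furedi}, which is a well-known open problem. One small overclaim: you write that sharpening the counting step is ``equivalently'' the bound $N^*(t)\ge(1-o(1))t^2$, but that is not quite right. Case~1 of Lemma~\ref{lemma:key} uses the explicit lower bound on $|P(\mathbf{u})|$ from Lemma~\ref{lemma:shangguan-matching}, not merely the value of $N^*(t)$; a hypothetical proof of $N^*(t)\ge(1-o(1))t^2$ by some other method would not automatically feed back into this case. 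So the uniqueness/stability statements you need are a priori a bit stronger than the bare asymptotic on $N^*(t)$. That said, your overall conclusion --- that the conjecture is essentially a stability form of the Erd\H{o}s--Frankl--F\"uredi problem and is genuinely open --- is correct and is precisely why the paper states it as a conjecture rather than a theorem.
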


 Lastly, it may be of interest to consider similar `wide-sense' analogues of other related structures, for example, that of separating hash families. For separating hash families, one may refer to \cite{Bazrafshan-Trung,Ge-Shangguan-Wang2019,Trung} and references therein. A code $\mathcal{C}= \{\mathbf{c}^1,\mathbf{c}^2,\dots,\mathbf{c}^m\}\subseteq Q^n$ is called a separating hash family of type $\left\lbrace w_1,w_2\right\rbrace$, denoted by SHF$(w_1,w_2)$, if for any disjoint subsets $C_1,C_2 \subseteq [m]$ with $|C_1|\leq w_1$ and $|C_2|\leq w_2$, there exists some $l\in [n]$ such that
 \begin{equation}\label{equa-shf}
  \left\lbrace c_l^j :j\in C_1\right\rbrace \cap \left\lbrace c_l^k :k\in C_2\right\rbrace = \emptyset.
 \end{equation}
 When $(w_1,w_2)=(1,t)$, an SHF$(1,t)$  is just a $t$-frameproof code.
 One may consider the following variant of this structure. We say $\mathcal{C}= \{\mathbf{c}^1,\mathbf{c}^2,\dots,\mathbf{c}^m\}\subseteq Q^n$ is a wide-sense SHF($w_1,w_2$), if for any disjoint subsets $C_1,C_2 \subseteq [m]$ with $|C_1|\leq w_1$ and $|C_2|\leq w_2$, there exists some $l\in [n]$ such that
 \begin{equation}\label{equa-wshf}
\left\lbrace c_l^j :j\in C_1\right\rbrace \cap \left\lbrace c_l^k :k\in C_2\right\rbrace = \emptyset \ \text{and} \ \left|\left\lbrace c_l^k :k\in C_2\right\rbrace\right|=1.
 \end{equation}
 Note that wide-sense $t$-frameproof codes can be viewed as the special case $(w_1,w_2)=(1,t)$ of the above notion.
 It is easy to see that for binary codes \eqref{equa-shf} and \eqref{equa-wshf} are equivalent.
  Similar to the arguments in Section~\ref{section:cover-free-and-wFP}, the above variant is actually closely related to a more general notion of cover-free families.
 A family $\mathcal{F}\subseteq 2^{[n]}$ is called $(r_1,r_2)$-cover-free, if for any $(s_1+s_2)$ different sets $A_1,\dots,A_{s_1}, B_1,\dots,B_{s_2}\in \mathcal{F}$ with $s_1\in [r_1],s_2\in [r_2]$, we have
 \[
 \bigcap_{i=1}^{s_1} A_i \not\subseteq \bigcup_{j=1}^{s_2} B_j.
 \]
 Clearly the classical definition of a $t$-cover-free family is the case $(r_1,r_2)=(1,t)$ of this definition. Moreover, observe that a family $\mathcal{F}\subseteq 2^{[n]}$ is $(r_1,r_2)$-cover-free if and only if  $\overline{\mathcal{F}}$ is an $(r_2,r_1)$-cover-free family.  For $(r_1,r_2)$-cover-free families and related objects, one may refer to the survey~\cite{Wei-survey}.

 The following result follows directly from the proof in Section~\ref{section:cover-free-and-wFP} and we omit its proof.
 Here a family $\mathcal{F}\subseteq 2^{[n]}$ is called non $t$-covering if $\cup_{A\in \mathcal{F}' }A \neq [n]$ for any $\mathcal{F}'\subseteq \mathcal{F}$ with $|\mathcal{F}'|\leq t$. So a family $\mathcal{F}\subseteq 2^{[n]}$ is $t$-wise intersecting if and only if $\overline{\mathcal{F}}$ is non $t$-covering.
 \begin{proposition}\label{wshf}
 	Let $w_1\geq 1, w_2\geq 2$ be integers and let $\mathcal{C} = \{\mathbf{c}^1,\mathbf{c}^2,\dots,\mathbf{c}^m\}\subseteq Q^n$ be a code of size $m\geq 2$. Then $\mathcal{C}$ is a wide-sense SHF($w_1,w_2$) if and only if
 	for each $i\in [m]$, the family $\overline{\mathcal{X}_i}$ is a $(w_1,w_2-1)$-cover-free family of size $m-1$,
 	or equivalently,  $\mathcal{X}_i$ is a $(w_2-1,w_1)$-cover-free family of size $m-1$.
 	Moreover, each $\overline{\mathcal{X}_i}$ is $w_2$-wise intersecting, or equivalently, each $\mathcal{X}_i$ is a non $w_2$-covering family.
 \end{proposition}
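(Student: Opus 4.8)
The plan is to adapt the proof of Theorem~\ref{thm:connection-cover-free-wFP} together with Lemma~\ref{inter} almost verbatim, the only change being that the singleton $\{i\}$ occurring there is replaced by an arbitrary set $C_1$ of size at most $w_1$. As a first step I would record the codeword form of \eqref{equa-wshf}, in analogy with Definition~\ref{equiv-defn}: for a given pair of disjoint sets $C_1,C_2\subseteq[m]$, condition~\eqref{equa-wshf} holds if and only if there exist $l\in[n]$ and $v\in Q$ such that $c_l^k=v$ for every $k\in C_2$ and $c_l^j\neq v$ for every $j\in C_1$ (the clause $|\{c_l^k:k\in C_2\}|=1$ simply names the common value $v$ of $C_2$). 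When $q=2$ this clause is automatic, which is the reason \eqref{equa-shf} and \eqref{equa-wshf} agree in the binary case.

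Next, fix $i\in[m]$. Exactly as in the first paragraph of the proof of Theorem~\ref{thm:connection-cover-free-wFP}, applying the codeword condition to $C_1=\{j\}$ and $C_2=\{i,k\}$ (legitimate since $w_2\ge 2$) forces $I(i,j)\neq I(i,k)$ whenever $j,k\in[m]\setminus\{i\}$ are distinct; hence ``$\mathcal{C}$ is a wide-sense SHF($w_1,w_2$)'' entails $|\overline{\mathcal{X}_i}|=m-1$, while the size $m-1$ is a hypothesis in the converse direction, and I would keep it in force throughout since it is what makes the complements below pairwise distinct. Now take disjoint $C_1,C_2$ with $|C_1|\le w_1$ and $|C_2|\le w_2$, pick $i\in C_2$, and write $C_2=\{i\}\cup\{k_1,\dots,k_{s_2}\}$, $C_1=\{j_1,\dots,j_{s_1}\}$, so that $s_1\le w_1$ and $s_2\le w_2-1$. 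Putting $v:=c_l^i$ in the codeword form, the requirement reads: there is $l$ with $l\in I(i,k_b)$ for all $b$ and $l\notin I(i,j_a)$ for all $a$; after complementation this is
\[
\bigcap_{a=1}^{s_1}\overline{I(i,j_a)}\ \not\subseteq\ \bigcup_{b=1}^{s_2}\overline{I(i,k_b)}.
\]
Since $C_1\cap C_2=\emptyset$ and the indices involved are distinct, the $s_1+s_2$ sets above are distinct members of $\overline{\mathcal{X}_i}$, and this is precisely the defining inequality of a $(w_1,w_2-1)$-cover-free family, with the $\overline{I(i,j_a)}$ intersected and the $\overline{I(i,k_b)}$ covering. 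Ranging over all $(C_1,C_2)$ and all $i\in C_2$ then yields both implications of the first ``if and only if'' at once; the reformulation ``$\mathcal{X}_i$ is $(w_2-1,w_1)$-cover-free'' follows from the identity (noted in the excerpt) that $\mathcal{F}$ is $(r_1,r_2)$-cover-free exactly when $\overline{\mathcal{F}}$ is $(r_2,r_1)$-cover-free. The small-cardinality corner cases (for instance $|C_2|=1$, where the displayed relation degenerates to $\bigcap_a\overline{I(i,j_a)}\neq\emptyset$) are dealt with by the same bookkeeping as in Section~\ref{section:cover-free-and-wFP}.

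Finally, for the ``moreover'' I would copy the proof of Lemma~\ref{inter} with $(w_1,w_2-1)$ in place of $(1,t-1)$: given $\{k_1,\dots,k_s\}\subseteq[m]\setminus\{i\}$ with $s\le w_2$ (the case $s=1$ being trivial since the codewords are distinct), the sets $\overline{I(k_1,i)},\overline{I(k_1,k_2)},\dots,\overline{I(k_1,k_s)}$ are $s$ distinct members of $\overline{\mathcal{X}_{k_1}}$, and applying $(w_1,w_2-1)$-cover-freeness with $\overline{I(k_1,i)}$ on the left (permissible as $1\le w_1$) and $\overline{I(k_1,k_2)},\dots,\overline{I(k_1,k_s)}$ on the right ($s-1\le w_2-1$, permissible as $w_2\ge 2$) produces a coordinate $l$ with $c_l^{k_1}=\dots=c_l^{k_s}\neq c_l^i$, whence $l\in\bigcap_{b=1}^{s}\overline{I(i,k_b)}$; thus each $\overline{\mathcal{X}_i}$ is $w_2$-wise intersecting, equivalently each $\mathcal{X}_i$ is non $w_2$-covering. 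I do not expect a genuine obstacle here: the argument is a line-by-line transcription of Theorem~\ref{thm:connection-cover-free-wFP} and Lemma~\ref{inter}, and the only point needing attention is retaining the ``size $m-1$'' hypothesis so that the relevant families of complements are indeed pairwise distinct.
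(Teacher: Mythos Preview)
Your proposal is correct and follows essentially the same approach as the paper: the paper itself simply states that Proposition~\ref{wshf} ``follows directly from the proof in Section~\ref{section:cover-free-and-wFP}'' and omits the details, and your write-up carries out precisely that adaptation of Theorem~\ref{thm:connection-cover-free-wFP} and Lemma~\ref{inter}, replacing the singleton $\{i\}$ by an arbitrary $C_1$ of size at most $w_1$ and splitting off a pivot $i\in C_2$.
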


 By Proposition~\ref{wshf}, bounds on $(r_1,r_2)$-cover-free families surveyed in~\cite{Wei-survey} could provide bounds on the size of wide-sense SHFs.

\bigskip\noindent\textbf{Acknowledgements.}   We are grateful to Wenjie Zhong for helpful discussions.

\end{document}